\def\@author#1{\g@addto@macro\elsauthors{\normalsize%
    \def\baselinestretch{1}%
    \upshape\authorsep#1\unskip\textsuperscript{%
      \ifx\@fnmark\@empty\else\unskip\sep\@fnmark\let\sep=,\fi
      \ifx\@corref\@empty\else\unskip\sep\@corref\let\sep=,\fi
      }%
    \def\authorsep{\space and\space}%
    \global\let\@fnmark\@empty
    \global\let\@corref\@empty
    \global\let\sep\@empty}%
    \@eadauthor={#1}
}
\def\ps@pprintTitle{%
     \let\@oddhead\@empty
     \let\@evenhead\@empty
     \def\@oddfoot{\footnotesize\itshape%
       }%
     \let\@evenfoot\@oddfoot}
\journal{\relax}
\numberwithin{equation}{section}
\theoremstyle{definition}
\newtheorem{thm}{Theorem}[section]
\newtheorem{prop}[thm]{Proposition}
\newtheorem{cor}[thm]{Corollary}
\newtheorem{lem}[thm]{Lemma}
\newtheorem{defn}[thm]{Definition}
\newtheorem{rem}[thm]{Remark}
\newtheorem{conj}[thm]{Conjecture}
\newcounter{cnt}
\newenvironment{enumerit}{\begin{list}{{\hfill\rm(\roman{cnt})\hfill}}{%
\settowidth{\labelwidth}{{\rm(iv)}}\leftmargin=\labelwidth%
\advance\leftmargin by \labelsep\rightmargin=0pt\usecounter{cnt}}}{\end{list}}
\newcommand{\wt}{\mathrm{wt}}
\def\cal#1{\text{$\mathcal{#1}$}}
\def\lie#1{\text{$\mathfrak{#1}$}}
\def\tlie#1{\tilde{\mathfrak{#1}}}
\newcommand{\ev}{\mathrm{ev}}
\def\tprod_#1^#2{\text{\scriptsize$\prod\limits_{\text{\footnotesize$#1$}}^{\text{\footnotesize$#2$}}$}}
\def\gb#1{{\mbox{\boldmath $#1$}}}
\def\opl_#1^#2{\text{\scriptsize$\bigoplus\limits_{\text{\footnotesize$#1$}}^{\text{\footnotesize$#2$}}$}}
\def\otm_#1^#2{\text{\scriptsize$\bigotimes\limits_{\text{\footnotesize$#1$}}^{\text{\footnotesize$#2$}}$}}
\def\tsum_#1^#2{\text{\scriptsize$\sum\limits_{\text{\footnotesize$#1$}}^{\text{\footnotesize$#2$}}$}}
\def\tbinom#1#2{\text{$\left(\begin{smallmatrix} #1\\#2\end{smallmatrix}\right)$}}
\def\ch{{\rm ch}}
\begin{document}
\begin{frontmatter}
\title{Finite-dimensional representations of twisted hyper loop algebras.\tnoteref{fn1}}
\author{Angelo Bianchi}
\emailauthor{angelo@ime.unicamp.br}{A.~B.}
\author{Adriano Moura}\emailauthor{aamoura@ime.unicamp.br}{A.~M.}
\tnotetext[fn1]{Partially supported by the FAPESP grant 2007/07456-9 (A.~B.) and  the CNPq grant 306678/2008-0 (A.~M.)}
\address{Universidade Estadual de Campinas}

\begin{abstract}
We investigate the category of finite-dimensional representations of twisted hyper loop algebras, i.e., the hyperalgebras associated to twisted loop algebras over finite-dimensional simple Lie algebras. The main results are the classification of the irreducible modules, the definition of the universal highest-weight modules, called the Weyl modules, and, under a certain mild restriction on the characteristic of the ground field, a proof that the simple modules and the Weyl modules for the twisted hyper loop algebras are isomorphic to appropriate simple and Weyl modules for the non-twisted hyper loop algebras, respectively, via restriction of the action.
\end{abstract}

\end{frontmatter}

\section*{Introduction}

In the last years, there has been an intense study of the finite-dimensional representation theory of algebras of the form $(\lie g\otimes A)^{G}$, where $\lie g$ is a finite-dimensional simple Lie algebra over the complex numbers, $A$ is an associative algebra and $G$ is a group acting nontrivially on $\lie g\otimes A$  (see \cite{CFS,fkks,fk,fms,ns,nss,S2} and references therein). These algebras can be regarded as generalizations of the (centerless) twisted affine Kac-Moody algebras, which are recovered in the particular case that $A=\mathbb C[t,t^{-1}]$ and $G$ is generated by a Dynkin diagram automorphism $\sigma$ of $\lie g$. On the other hand, it was initiated in \cite{hyperlar} the study of finite-dimensional representations of hyper loop algebras in positive characteristic, which is morally equivalent to studying representations of the associate affine Kac-Moody group (see the introduction of \cite{hyperlar} for more details).
The goal of this paper is to establish the basic results about the finite-dimensional representations of twisted hyper loop algebras, such as the classification of the simple modules, the development of the underlying highest-weight theory, and the existence of universal highest-weight objects, called the Weyl modules.

The hyperalgebras are constructed by considering an integral form of the corresponding universal enveloping algebra and then tensoring this form over $\mathbb Z$ with an arbitrary field $\mathbb F$, which we shall assume is algebraically closed. In the case of $\lie g$, one considers Konstant's integral form of $U(\lie g)$ and the corresponding hyperalgebra is denoted by $U_\mathbb F(\lie g)$. The affine analogues of Kostant's form were obtained by Garland \cite{G} in the non-twisted case and by Mitzman \cite{mitz} in the twisted case. The corresponding hyperalgebras are denoted by  $U_\mathbb F(\tlie g)$ and $U_\mathbb F(\tlie g^\sigma)$, respectively. The details of the construction of these algebras are described in Section \ref{s:algs}. Although this section is mostly dedicated to reviewing these constructions and fixing the basic notation of the paper, a few extra technical details required proof. Such details, which are proved in Section \ref{s:evmaps}, allow us to regard $U_\mathbb F(\tlie g^\sigma)$ as a subalgebra of $U_\mathbb F(\tlie g)$, a fact not as immediate as in the characteristic zero setting -- for instance, Mitzman's integral form is not a subalgebra of Garland's integral form for types $A_{2n}^{(2)}$ and $D_4^{(3)}$. In fact, for type $A_{2n}^{(2)}$, we need to suppose that the characteristic of $\mathbb F$ is different from $2$ because, otherwise, the hyperalgebra of the simple Lie algebra $\lie g_0:=\lie g^\sigma$ is not a subalgebra of $U_\mathbb F(\tlie g^\sigma)$ (see Remark \ref{r:KinGM}). Once the inclusion $U_\mathbb F(\tlie g^\sigma)\hookrightarrow U_\mathbb F(\tlie g)$ is established, it is possible to consider evaluation maps $U_\mathbb F(\tlie g^\sigma)\to U_\mathbb F(\lie g)$, which are important for the development of the representation theory.

Section \ref{s:gandtg} is dedicated to reviewing the relevant facts about the finite-dimensional representation theories of $U_\mathbb F(\lie g)$ and $U_\mathbb F(\tlie g)$. Our main results are in Sections \ref{s:main} and \ref{s:tmvr}.

In Section, \ref{s:main} we obtain the basic results of the theory mentioned at the end of the first paragraph of this introduction. We begin by developing the relevant weight theory, called $\ell$-weight theory, which will lead to the notion of Drinfeld polynomials. Still in Section \ref{ss:tlw}, we obtain a set of relations satisfied by any highest-$\ell$-weight vector generating a finite-dimensional module (Proposition \ref{ellhwreltw}). Then, in Section  \ref{ss:evm}, we prove that every simple module must be a highest-$\ell$-weight module with a particular kind of highest $\ell$-weight (Drinfeld polynomials), construct the evaluation modules, and compute the Drinfeld polynomials of the simple ones. This eventually leads to the classification of the finite-dimensional simple $U_\mathbb F(\tlie g^\sigma)$-modules (Theorem \ref{t:csm}).
Next, in Section \ref{ss:wm}, we define the Weyl modules by generators and relations and prove that they are finite-dimensional (Theorem \ref{t:twfd}). It then follows from Proposition \ref{ellhwreltw} that they are the universal finite-dimensional highest-$\ell$-weight modules. No further restriction on $\mathbb F$ are required in the development of this section. It is worth remarking that, once this results are established, one can pass to the case of non-algebraically closed ground fields by using the approach of \cite{JM2}. In fact, the same proofs can be used {\it mutatis-mutandis} and the same is true for the results of \cite{JM3}.

In the last section, we prove the positive characteristic analogue of a result of \cite{CFS} stating that the simple modules and the Weyl modules for $U_\mathbb F(\tlie g^\sigma)$ can be obtained by restriction of the action from appropriate simple and Weyl modules for $U_\mathbb F(\tlie g)$, respectively (Theorem \ref{t:tmvr}). However, we need to suppose that the characteristic of $\mathbb F$ is not equal to the order of the Dynkin diagram automorphism $\sigma$.
Moreover, the proof of Theorem \ref{t:tmvr} in the case of Weyl modules is the only proof of the paper which could not be performed intrinsically in the context of hyperalgebras. Instead, we had to use the characteristic zero version of the result and apply the technique of reduction modulo $p$. In particular, this part of the proof depends on a conjecture on the independence of the dimension of the Weyl modules on the ground field (see Remark \ref{r:conj} for comments on the proof of this conjecture).

\noindent
\textbf{Acknowledgments:} The results of this paper were obtained during the Ph.D. studies of the first author under the supervision of the second author. We thank A. Engler and P. Brumatti for useful discussions and helpful references about Henselian discrete valuation rings.

\section{The algebras}\label{s:algs}

Throughout this work, $\mathbb{C}$ (respectively $\mathbb{C}^\times$) denotes the set of complex (respectively nonzero complex) numbers and $\mathbb{Z}$ (respectively $\mathbb{Z}_+$ and $\mathbb N$) is the set of integers (respectively non-negative and positive integers).

\subsection{Simple Lie algebras and diagram automorphisms}\label{s:simple}

Let $I$ be the set of vertices of a finite-type and connected Dynkin diagram and let $\lie g$ be the associated finite-dimensional simple Lie algebra over $\mathbb C$. Fix a triangular decomposition $\lie g=\lie n^-\oplus\lie h\oplus\lie n^+$ and denote by $R$ and $R^+$ the associated root system and set of positive roots. Let $\{\alpha_i:{i\in I}\}$ (resp. $\{ \omega_i :{i\in I}\}$) denote the simple roots (resp. fundamental weights) and set $Q=\oplus_{i\in I} \mathbb Z \alpha_i, Q^+=\oplus_{i\in I} \mathbb Z_+ \alpha_i, P=\oplus_{i\in I} \mathbb Z \omega_i$, and $P^+=\oplus_{i\in I} \mathbb Z_+ \omega_i$.  Let $\lie g_\alpha \ (\alpha \in R)$ denote the associated root space  and let $\theta$ be the highest root of $\lie g$.

Fix a diagram automorphism $\sigma$ of $\lie g$ and let $m\in\{1,2,3\}$ be its order. It induces a permutation of $R$ given by $$\displaystyle{\sigma\left(\tsum_{i \in I}^{} n_i \alpha_i\right)=\tsum_{i \in I}^{} n_i \alpha_{\sigma(i)}}$$
and a unique Lie algebra automorphism of $\lie g$ defined by $\sigma(x^\pm_i)=x^\pm_{\sigma(i)}$ for all $i\in I$. For $\alpha\in R^+$, set
\begin{equation*}
\Gamma_\alpha = \#\ \{\sigma^j(\alpha):j\in\mathbb Z\}
\end{equation*}
and notice that $\Gamma_\alpha\in\{1,m\}$. Observe also that $\sigma(\lie g_\alpha)=\lie g_{\sigma(\alpha)}$ for all $\alpha\in R$.
If $\zeta$ is a primitive $m^{th}$ root of unity and $\lie g_\epsilon=\{x\in\lie g: \sigma(x)=\zeta^\epsilon x\}$, then
\begin{equation*}
\lie g=\opl_{\epsilon=0}^{m-1}\lie g_\epsilon \quad\text{and}\quad [\lie g_\epsilon, \lie g_{\epsilon'}]\subseteq \lie g_{\overline{\epsilon+\epsilon'}}
\end{equation*}
where $\overline{\epsilon+\epsilon'}$ is the remainder of the division of $\epsilon+\epsilon'$ by $m$ (henceforth, we will abuse of notation and write $\epsilon+\epsilon'$ instead of $\overline{\epsilon+\epsilon'}$). In particular, $\lie g_0$ is a subalgebra of $\lie g$ and $\lie g_\epsilon$ is a $\lie g_0$-module for $0\le \epsilon<m$. It is well-known that $\lie g_0$ is simple and that $\lie g_\epsilon$ is a simple $\lie g_0$-module for all $0 \leq \epsilon <  m$ (cf. \cite[Chap. VIII]{Kac}). Moreover, $\lie g_1\cong_{\lie g_0}\lie g_2$ if $m=3$.

If $\lie a$ is a subalgebra of $\lie g$ and $0\le \epsilon<m$, set $\lie a_\epsilon=\lie a\cap\lie g_\epsilon$. Then, $\lie h_0$ is a Cartan subalgebra of $\lie g_0$ and $\lie g_0= \lie n^+_0 \oplus \lie h_0 \oplus \lie n^-_0$. Let $I_0$ be the set of vertices of the Dynkin diagram of $\lie g_0$ and $R_0$ the root system determined by $\lie h_0$.  The simple roots and fundamental weights of $\lie g_0$ determined by the aforementioned triangular decomposition of $\lie g_0$ will be also denoted by $\alpha_i$ and $\omega_i$ for $i\in I_0$, as this should not cause confusion with those of $\lie g$. The root and weight lattices will be denoted by $Q_0$ and $P_0$ and the sets $Q_0^+$ and $P_0^+$ are defined in the usual way. Similarly, $R_0^+$ will denote the corresponding set of positive roots. We let $R_s$ and $R_l$ be the subsets of $R_0^+$ corresponding to short and long roots, respectively.

If $V$ is a finite-dimensional representation of $\lie g$ or of $\lie g_0$, let $\wt(V)$ be the set of weights of $V$.  It is well-known that $(\lie g_\epsilon)_0=\lie h_\epsilon$ and, if $\mu\ne 0$, then $(\lie g_\epsilon)_\mu$ is one-dimensional for all $\epsilon$. We record the following table with information about $\lie g_\epsilon$ according to $\lie g$ and $\sigma$.

$$\begin{tabular}[ht]{|l|c|l|c|c|}
  \hline
    $\quad\mathfrak{g}$ & $m$ & $\lie g_0$ & $\wt(\lie g_1)\setminus\{0\}$ & Dynkin diagram of $\lie g_0$\\
  \hline\hline
   $A_{2}$ & $2$ & $A_1$ & $\pm R_0\cup\{\pm 2\alpha : \alpha\in R_s\}$&  $\underset{1}{\circ}$ \\
   $A_{2n}, n\geq 2$ &  $2$ & $B_n$ & $\pm R_0\cup\{\pm 2\alpha : \alpha\in R_s\}$&  $\underset{1}{\circ} - \underset{2}{\circ} - \dotsb - \underset{n-1}{\circ} \Rightarrow \underset{n}{\circ}$ \\
   $A_{2n-1}, n \geq 2$ &  $2$ &  $C_n$ & $\pm R_s$ &  $\underset{1}{\circ} - \underset{2}{\circ} - \dotsb - \underset{n-1}{\circ} \Leftarrow \underset{n}{\circ}$ \\
   $D_{n+1}, n\geq 3$ &  $2$ & $B_n$ & $\pm R_s$ &  $\underset{1}{\circ} - \underset{2}{\circ} - \dotsb - \underset{n-1}{\circ} \Rightarrow \underset{n}{\circ}$\\
   $E_6 $ & $2$ & $F_4$ & $\pm  R_s$ &   $\underset{1}{\circ} - \underset{2}{\circ} \Leftarrow \underset{3}{\circ} - \underset{4}{\circ}$\\
   $D_{4}$ & $3$ & $G_2$ & $\pm  R_s$ &$ \underset{1}{\circ} \Rrightarrow \underset{2}{\circ}$\\

  \hline
\end{tabular}$$

\subsection{Chevalley basis} \label{s:chevalley}
Fix a Chevalley basis $\cal C=\{x_\alpha^\pm, h_i:\alpha\in R^+,i\in I\}$ for $\lie g$ and, given $\alpha\in R^+$, set $h_\alpha=[x_\alpha^+,x_\alpha^-]$. For the remainder of this subsection, assume  that $\sigma$ is nontrivial.
We review a construction of a basis of $(\lie g_\epsilon)_{\mu}$, where $\mu\in\wt(\lie g_\epsilon)$, in terms of this fixed Chevalley basis. It is well-known that if $\mu\ne 0$, then $\mu=\alpha|_{\lie h_0}$ for some $\alpha\in R$ and $\alpha|_{\lie h_0}=\beta|_{\lie h_0}$ if and only if $\beta=\sigma^j(\alpha)$ for some $0\le j<m$.
Suppose first that $\lie g$ is not of type $A_{2n}$. Then, given $\alpha\in R^+$ and $0\le\epsilon<m$, set
\begin{equation*}
x_{\alpha,\epsilon}^\pm =
\begin{cases}
\sum\limits_{j= 0}^{m-1} \zeta^{j\epsilon}x^\pm_{\sigma^j(\alpha)}, & \quad\text{if } \sigma(\alpha)\ne\alpha,\\
\delta_{0,\epsilon}x_\alpha^\pm,& \quad \text{otherwise,}
\end{cases}
\qquad\text{and}\qquad
\hbar_{\alpha,\epsilon} =
\begin{cases}
\sum\limits_{j= 0}^{m-1} \zeta^{j\epsilon}h_{\sigma^j(\alpha)}, & \quad\text{if } \sigma(\alpha)\ne\alpha,\\
\delta_{0,\epsilon}h_\alpha,& \quad \text{otherwise.}
\end{cases}
\end{equation*}
Notice that
\begin{equation}\label{e:basisrel}
x_{\sigma(\alpha),\epsilon}^\pm = \zeta^{-\epsilon}x_{\alpha,\epsilon}^\pm, \qquad
\hbar_{\sigma(\alpha),\epsilon} = \zeta^{-\epsilon}\hbar_{\alpha,\epsilon}, \qquad x_\alpha^\pm = \frac{1}{\Gamma_\alpha} \tsum_{\epsilon=0}^{m-1} x_{\alpha,\epsilon}^\pm,  \qquad\text{and}\qquad h_\alpha = \frac{1}{\Gamma_\alpha} \tsum_{\epsilon=0}^{m-1} \hbar_{\alpha,\epsilon}.
\end{equation}

Now assume $\lie g$ is of type $A_{2n}$. In this case, for $\alpha\in R^+$, we have
\begin{equation*}
\alpha=\sigma(\alpha)\quad\Leftrightarrow\quad \alpha|_{\lie h_0}\in 2R_s\quad\Leftrightarrow\quad \alpha=\beta+\sigma(\beta) \quad\text{for some}\quad \beta\in R^+.
\end{equation*}
Moreover, if $\beta$ is as above, then $\beta|_{\lie h_0}\in R_s$.
It is easily seen that, for such $\alpha$, we have $x_\alpha^\pm\in\lie g_1$ and $h_\alpha\in\lie g_0$. Thus, we set
\begin{equation*}
x_{\alpha,\epsilon}^\pm= \delta_{1,\epsilon}x_\alpha^\pm \quad\text{and}\quad \hbar_{\alpha,\epsilon}=\delta_{0,\epsilon}h_\alpha.
\end{equation*}
Otherwise, if $\alpha\ne\sigma(\alpha)$, set
\begin{equation*}
x_{\alpha,\epsilon}^\pm =
\begin{cases}
x_\alpha^\pm+(-1)^\epsilon x_{\sigma(\alpha)}^\pm, & \quad\text{if } \alpha|_{\lie h_0}\in R_l,\\
\sqrt 2\left( x_\alpha^\pm+ (-1)^\epsilon x_{\sigma(\alpha)}^\pm\right), & \quad\text{if } \alpha|_{\lie h_0}\in R_s,\\
\end{cases}
\quad\text{and}\quad
\hbar_{\alpha,\epsilon} =
\begin{cases}
h_\alpha+(-1)^\epsilon h_{\sigma(\alpha)}, & \quad\text{if } \alpha|_{\lie h_0}\in R_l,\\
2 \left(h_\alpha+(-1)^\epsilon  h_{\sigma(\alpha)}\right), & \quad\text{if } \alpha|_{\lie h_0}\in R_s.\\
\end{cases}
\end{equation*}
Observe that relations \eqref{e:basisrel} remain valid in this case and that, if $\alpha+\sigma(\alpha)\in R^+$, we have
\begin{equation}\label{e:x2Rs}
x_{\alpha+\sigma(\alpha),1}^\pm =  \frac{s}{4}[x_{\alpha,0}^\pm,x_{\alpha,1}^\pm]=s[ x_\alpha^\pm,x_{\sigma(\alpha)}^\pm] \quad\text{for some}\quad s\in\{\pm 1\}.
\end{equation}

It is now straightforward to check that
\begin{equation*}
(\lie g_{\epsilon})_{\pm\mu} = \mathbb Cx_{\alpha,\epsilon}^\pm \quad\text{if}\quad \alpha|_{\lie h_0}=\mu\in\wt(\lie g_\epsilon)
\end{equation*}
and that $\lie h_\epsilon$ is spanned by the elements $h_{\alpha_i,\epsilon}$ with $i\in I$. In order to fix a basis for these spaces, we proceed as follows. Let $O$ be a complete set of representatives of the orbits of the $\sigma$-action on $R^+$. Then, if $\mu\in\wt(\lie g_\epsilon)\cap Q_0^+\setminus\{0\}$, set
\begin{equation}\label{e:xmuxalpha}
x_{\mu,\epsilon}^\pm = x_{\alpha,\epsilon}^\pm \quad\text{where}\quad \alpha\in O, \  \alpha|_{\lie h_0}=\mu.
\end{equation}
Notice also that there exists a unique injective map $o:I_0\to I$ such that $\alpha_{o(i)}\in O$ for all $i\in I_0$ and set
\begin{equation*}
h_{i,\epsilon} = \begin{cases} \frac{1}{2} \hbar_{o(i),\epsilon},  & \mbox{ if } \lie g \text{ is of type } A_{2n} \text{ and } \alpha_i \in R_s, \\
\hbar_{o(i),\epsilon}, &\text{ otherwise.} \end{cases}
\end{equation*}
Then,
$$\cal C^\sigma(O) = \{x_{\mu,\epsilon}^\pm, h_{i,\epsilon}: 0\le \epsilon<m, \mu\in\wt(\lie g_\epsilon)\cap Q_0^+\setminus\{0\}, i\in I_0\}\setminus\{0\}$$
is a basis of $\lie g$. For notational convenience, for $\lie g$ of type $A_{2n}$, we shall assume that $O$ is chosen in such a way that $s=1$ in \eqref{e:x2Rs}. In the case that $\lie g$ is of type $D_4$ and $m=3$, it is convenient to choose $O$ in a more specific way. Namely, let $j\in I$ be the unique vertex fixed by $\sigma$ and choose $i\in I$ such that $\sigma(i)\ne i$. Then set
\begin{equation*}
O_i = \{\alpha\in R^+:\sigma(\alpha)=\alpha\}\cup\{\alpha_i, \alpha_j+\alpha_i, \alpha_j+\sigma(\alpha_i)+\sigma^2(\alpha_i)\}.
\end{equation*}
The reason for such choices is Theorem \ref{forms} below.
In particular, the brackets of elements of $\cal C^\sigma(O)$ are in the $\mathbb Z$-span of $\cal C^\sigma(O)$. We shall need some of these brackets explicitly for later use and we record them in the next lemma. For notational convenience, we set $x_{\mu,\epsilon}^\pm=0$ if $\mu\notin\wt(\lie g_\epsilon)$ and, if $\alpha\in O$ is such that $\mu=\alpha|_{\lie h_0}$, we set $h_{\mu,\epsilon}=h_{\alpha,\epsilon}$.

\begin{lem}\label{[]tw} Let $0\le \epsilon,\epsilon' <m$. Given $\mu \in R_0^+$, $\nu \in \wt(\lie g_\epsilon)\cap Q_0^+\backslash{\{0\}}$ and  $\eta \in \wt(\lie g_\epsilon)\cap \wt(\lie g_{\epsilon'})\cap Q_0^+\backslash{\{0\}}$, we have:
\begin{enumerate}[(a)]
\item\label{[]tw-weight}  $[h_{\mu,0}, x^\pm_{\nu,\epsilon}]= \pm \nu(h_{\mu,0}) x^\pm_{\nu,\epsilon}$.

\item $[x_{\eta,\epsilon}^+,x_{\eta,\epsilon'}^-]=  \begin{cases} 2h_{\eta,\epsilon+\epsilon'}, & \text{ if } \eta \in R_s \text{ and } \lie g \text{ is of type } A_{2n},  \\ \delta_{\epsilon\epsilon',1}h_{\frac{\eta}{2},0}, & \text{ if } \eta \in 2R_s \text{ and }  \lie g \text{ is of type }  A_{2n}, \\ h_{\eta, \epsilon+\epsilon'}, & \text{ otherwise. } \end{cases}$

\item If $h_{\nu,1}\ne0$, then $[h_{\nu,1},x_{\nu,\epsilon}^\pm]=  \begin{cases} \pm 3 x^\pm_{\nu,\epsilon+1}, &\text{ if } \nu \in R_s \text{ and } \lie g \text{ is of type } A_{2n}, \\ \pm 2 x^\pm_{\nu,\epsilon+1}, & \text{ otherwise. } \end{cases} $\hfill\qedsymbol
\end{enumerate}
\end{lem}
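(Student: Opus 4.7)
The plan is to derive all three identities by direct substitution of the explicit formulas for $x^\pm_{\mu,\epsilon}$ and $h_{\mu,\epsilon}$ from Section \ref{s:chevalley}, thereby reducing each bracket to a finite sum of Chevalley brackets in $\lie g$ itself. Part (a) is immediate from the observations made just before \eqref{e:xmuxalpha}: since $h_{\mu,0}\in\lie h_0$ and $x^\pm_{\nu,\epsilon}\in(\lie g_\epsilon)_{\pm\nu}$, the identity is simply the action of the Cartan subalgebra $\lie h_0$ on one of its weight spaces.

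For part (b), I would fix $\alpha\in O$ with $\alpha|_{\lie h_0}=\eta$ and split according to the definition of $x^\pm_{\alpha,\epsilon}$. When $\sigma(\alpha)=\alpha$, which for $\alpha\in O$ can occur only if $\lie g$ is of type $A_{2n}$ and $\eta\in 2R_s$, one has $x^\pm_{\eta,\epsilon}=\delta_{1,\epsilon}x^\pm_\alpha$ and hence $[x^+_{\eta,\epsilon},x^-_{\eta,\epsilon'}]=\delta_{\epsilon\epsilon',1}h_\alpha$; the identification of $h_\alpha$ with $h_{\eta/2,0}$ is then unpacked from the $\tfrac12$ normalization in the definition of $h_{i,\epsilon}$ for short simple roots of $\lie g_0$, using $h_{\beta+\sigma(\beta)}=h_\beta+h_{\sigma(\beta)}$ when $\beta|_{\lie h_0}=\eta/2$. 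In the remaining cases $\sigma(\alpha)\ne\alpha$, and $x^\pm_{\alpha,\epsilon}$ is an explicit $\zeta$-weighted sum over the $\sigma$-orbit of $\alpha$ (with or without a $\sqrt 2$ factor in the $A_{2n}$ case); the standard identity $[x^+_\beta,x^-_\gamma]=\delta_{\beta\gamma}h_\beta$ kills every off-diagonal cross-term, and the surviving diagonal sum collapses to $\hbar_{\alpha,\epsilon+\epsilon'}$, matching the stated formula after the short-root normalization in $A_{2n}$ is accounted for.

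Part (c) is handled by the same expand-and-simplify strategy applied to $\ad h_{\nu,1}$. Generically, only diagonal cross-terms survive and the coefficient $\pm 2$ appears from $\alpha(h_\alpha)=2$; in the short-root $A_{2n}$ subcase the cross-term $\sigma(\alpha)(h_\alpha)=-1$ is nonzero, because $\alpha$ and $\sigma(\alpha)$ are then adjacent simple roots of $A_{2n}$, so $[h_\alpha-h_{\sigma(\alpha)},x^+_\alpha]=3\,x^+_\alpha$, which, combined with the $\sqrt 2$ in $x^\pm_{\nu,\epsilon}$ and the $\tfrac12$ absorbed into $h_{\nu,1}$, yields the $\pm 3$ coefficient rather than $\pm 2$. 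I expect the main obstacle to be not conceptual but bookkeeping: keeping track of the $\sqrt 2$ factors, the alternating signs $(-1)^\epsilon$, and the $\tfrac12$ normalization in the $A_{2n}$ short-root case so that each formula produces exactly the claimed constant. Once the case split is organized around orbit size (one, or $m$ outside $A_{2n}$, or $m$ inside $A_{2n}$) and, for $A_{2n}$, around the length of the image root, each identity reduces to a short direct computation.
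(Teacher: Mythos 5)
The paper offers no proof of this lemma at all: it is asserted with a terminal \qedsymbol, and the intended derivation is exactly the expand-and-compute in the fixed Chevalley basis that you carry out, organized by orbit size $\Gamma_\alpha$ and, in type $A_{2n}$, by the length of the restriction $\alpha|_{\lie h_0}$. Your case split and the bookkeeping of the $\sqrt 2$ and $\tfrac12$ normalizations are in order, so the proposal fills in the detail the paper omits along the route the authors had in mind.

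Two steps should be made precise before this reads as a complete proof. First, in part (b) you cite ``the standard identity $[x^+_\beta,x^-_\gamma]=\delta_{\beta\gamma}h_\beta$'' to eliminate off-diagonal brackets, but that is not a general identity: for distinct $\beta,\gamma\in R^+$, $[x^+_\beta,x^-_\gamma]$ lies in $\lie g_{\beta-\gamma}$ and is nonzero whenever $\beta-\gamma\in R$. What actually kills the cross-terms is that $\sigma^j(\alpha)-\sigma^k(\alpha)\notin R$ for $j\ne k$ within a $\sigma$-orbit, a fact that holds in every case occurring here (in types $A_{2n-1}$, $D_{n+1}$, $E_6$, $D_4$ orbit roots are mutually orthogonal, and in $A_{2n}$, although $\alpha+\sigma(\alpha)$ may be a root, $\alpha-\sigma(\alpha)$ never is, since in a simply-laced root system at most one of $\alpha\pm\beta$ can be a root). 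That is a one-line check but it needs to be stated, not attributed to an identity that does not exist. Second, in part (c) the phrase ``adjacent simple roots of $A_{2n}$'' is misleading: for a general short $\nu\in R_s$ the roots $\alpha,\sigma(\alpha)$ lying over $\nu$ need not be simple. The property you actually use, namely $\sigma(\alpha)(h_\alpha)=\langle\sigma(\alpha),\alpha^\vee\rangle=-1$, follows directly from $\alpha+\sigma(\alpha)\in R$ together with simple-lacedness of $A_{2n}$, independently of whether $\alpha$ is simple. With these two points corrected the argument is complete and correct.
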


\begin{rem}\label{r:noCbforA2n}
Observe that, unless $\lie g$ is of type $A_{2n}$, the set $\cal C^\sigma_0(O) = \{x_{\mu,0}^\pm, h_{i,0}: \mu\in R_0^+, i\in I_0\}$ is a Chevalley basis of $\lie g_0$ (this does not depend on the more specific choice of $O$ in the case $m=3$). For $\lie g$ of type $A_{2n}$, a Chevalley basis is obtained from $\cal C^\sigma_0(O)$ by replacing the element $h_{j,0}$, where $j$ is the unique element of $I_0$ such that $\alpha_j\in R_s$, by the element $\hbar_{o(j),\epsilon} = 2h_{j,0}$.
\end{rem}

\subsection{Loop algebras}

Given a vector space $\lie a$ over $\mathbb C$, consider its loop space $\tlie a=\lie a\otimes \mathbb{C}[t,t^{-1}]$.
If $\lie a$ is a Lie algebra, then $\tlie a$ is also a Lie algebra, called the loop algebra of $\lie a$, where the bracket is given by
$[x\otimes f, y\otimes g] =[x,y]\otimes fg$ for $x,y\in\lie a$ and $f,g\in\mathbb{C}[t,t^{-1}]$.
Notice that $\lie a\otimes 1$ is a subalgebra of $\tlie a$ isomorphic to $\lie a$ and, if $\lie b$ is a subalgebra of $\lie a$,
then $\tlie b=\lie b\otimes \mathbb{C}[t,t^{-1}]$ is naturally a subalgebra of $\tlie a$.
In particular, $\tlie g = \tlie n^- \oplus \tlie h \oplus \tlie n^+$ and $\tlie h$ is an abelian subalgebra of $\tlie g$.
The elements $x_\alpha^\pm\otimes t^r$ and $h_i\otimes t^r$ with $\alpha\in R^+,i\in I, r\in\mathbb Z$,
form a basis of $\tlie g$ which we shall refer to as a Chevalley basis following \cite{mitz}.

The automorphism $\sigma$ of $\lie g$ can be extended to an automorphism $\widetilde{\sigma}$ of $\tlie g$ by defining
$$\widetilde{\sigma} (x\otimes t^j)= \zeta^j \sigma(x) \otimes t^j,$$
for $x \in \lie g$, $j \in \mathbb{Z}$. Notice that $\widetilde\sigma$ is also an automorphism of order $m$. Let $\tlie g^\sigma$ be the subalgebra of fixed points of $\widetilde\sigma$, which can be described as
$$\tlie g^\sigma = \opl_{\epsilon=0}^{m-1} \tlie g_\epsilon^\sigma \qquad\text{where}\qquad  \tlie g_\epsilon^\sigma = \lie g_\epsilon\otimes t^{m-\epsilon}\mathbb{C}[t^{m}, t^{-m}].$$
The algebra $\tlie g^\sigma$ is called the twisted loop algebra of $\lie g$ (associated to $\sigma$). Define the $\sigma$-invariant subspaces $(\tlie n^\pm_\epsilon)^\sigma$ and $\tlie h^\sigma_\epsilon$ in the obvious manner. In particular, $\tlie g^\sigma_\epsilon = (\tlie n^-_\epsilon)^\sigma \oplus \tlie h^\sigma_\epsilon \oplus (\tlie n^+_\epsilon)^\sigma$.

Given a basis of $\lie g$ of the form $\cal C^\sigma(O)$ as in Subsection \ref{s:chevalley}, the associated elements of the form $x_{\mu,\epsilon}^\pm\otimes t^r, h_{i,\epsilon}\otimes t^r$ with $x_{\mu,\epsilon}, h_{i,\epsilon}\in\cal C^\sigma(O), r\in\mathbb Z, \epsilon \equiv_m -r$, form a basis of $\tlie g^\sigma$ which we also refer to as a Chevalley basis following \cite{mitz}. The next lemma is easily established (a proof can be found in \cite{bia:phd}; see also \cite{CFS}).

\begin{lem}\label{isos} Let $\alpha\in R_0^+$.
\begin{enumerate}[(a)]
\item Suppose $\lie g$ is not of type $A_{2n}$. If $\alpha\in R_l$, then the subspace spanned by $\{x^\pm_{\alpha,0}\otimes t^{mk}, h_{\alpha,0}\otimes t^{mk}  :  k\in\mathbb Z\}$ is a subalgebra of $\tlie g^\sigma$ isomorphic to $\tlie{sl}_2$. Otherwise, if $\alpha\in R_s$, the subspace spanned by $\{x^\pm_{\alpha,\epsilon}\otimes t^{mk-\epsilon}, h_{\alpha,\epsilon}\otimes t^{mk-\epsilon}  :  k\in\mathbb Z,  \ 0 \leq \epsilon < m \}$ is a subalgebra of $\tlie g^\sigma$ isomorphic to $\tlie{sl}_2$.

\item Suppose $\lie g$ is of type $A_{2n}$. If $\alpha\in R_l$, then the subspace spanned by $\{x^\pm_{\alpha,\epsilon}\otimes t^{mk-\epsilon}, h_{\alpha,\epsilon}\otimes t^{mk-\epsilon}  :  k\in\mathbb Z,  \ 0 \leq \epsilon <m \}$ is a subalgebra of $\tlie g^\sigma$ isomorphic to $\tlie{sl}_2$.  If $\alpha \in R_s$, then the subspace spanned by $\{x_{\alpha,\epsilon}^\pm\otimes t^{2k+\epsilon},x_{\alpha+\sigma(\alpha),1}^\pm\otimes t^{2k+1}, h_{\alpha,\epsilon}\otimes t^{2k+\epsilon}  :  k\in\mathbb Z, \ 0 \leq \epsilon \leq m-1 \}$  is a subalgebra of $\tlie g^\sigma$ isomorphic to $\tlie{sl}_3^{\tau}$ where $\tau$ is the nontrivial diagram automorphism of $\lie{sl}_3$.\hfill\qedsymbol
\end{enumerate}
\end{lem}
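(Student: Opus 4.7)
The plan is to treat each case in turn by verifying three things: that the spanning elements lie in $\tlie g^\sigma$; that they are closed under the bracket; and that there is an explicit isomorphism onto the claimed loop algebra. Membership is immediate from the grading $\tlie g^\sigma_\epsilon = \lie g_\epsilon \otimes t^{-\epsilon}\mathbb C[t^m, t^{-m}]$: in each case the exponent $r$ of $t$ appearing in a spanning element satisfies $r \equiv -\epsilon \pmod m$ whenever the tensor factor sits in $\lie g_\epsilon$, and for the doubled-root elements $x^\pm_{\alpha+\sigma(\alpha), 1} \otimes t^{2k+1}$ in the $A_{2n}$ short-root case, $x^\pm_{\alpha+\sigma(\alpha), 1} \in \lie g_1$ and $2k+1 \equiv -1 \pmod 2$.

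For the three cases where the target is $\tlie{sl}_2$, I would construct a linear map
\[
\phi : \tlie{sl}_2 \longrightarrow \tlie g^\sigma, \qquad e \otimes t^r \mapsto x^+_{\alpha, \epsilon(r)} \otimes t^r, \quad f \otimes t^r \mapsto x^-_{\alpha, \epsilon(r)} \otimes t^r, \quad h \otimes t^r \mapsto h_{\alpha, \epsilon(r)} \otimes t^r,
\]
where $\epsilon(r) \in \{0, \ldots, m-1\}$ is the residue of $-r$ modulo $m$ (with $r$ restricted to $m\mathbb Z$ in the non-$A_{2n}$ long-root case, since only $\epsilon=0$ appears there). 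This is manifestly a linear isomorphism onto the claimed subspace, and that it preserves the bracket reduces to the identities $[x^+_{\alpha, \epsilon} \otimes t^r, x^-_{\alpha, \epsilon'} \otimes t^s] = h_{\alpha, \overline{\epsilon+\epsilon'}} \otimes t^{r+s}$ and $[h_{\alpha, \epsilon} \otimes t^r, x^\pm_{\alpha, \epsilon'} \otimes t^s] = \pm 2\, x^\pm_{\alpha, \overline{\epsilon + \epsilon'}} \otimes t^{r+s}$, both of which follow from Lemma \ref{[]tw} using the ``otherwise'' branches of (b) and (c), together with (a) for the $\epsilon = 0$ subcase. Closure under the bracket is built into the verification that $\phi$ is a homomorphism.

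The main obstacle is the $A_{2n}$ short-root case, where the target $\tlie{sl}_3^\tau$ is itself a twisted loop algebra. Here the strategy is structural: first identify the $\lie{sl}_3$-subalgebra of $\lie g$ spanned by $x^\pm_\alpha, x^\pm_{\sigma(\alpha)}, x^\pm_{\alpha+\sigma(\alpha)}, h_\alpha, h_{\sigma(\alpha)}$ via an isomorphism intertwining the restriction of $\sigma$ with the standard diagram automorphism $\tau$ of $\lie{sl}_3$, and then pass to loop algebras and fixed points of the induced twist to obtain a copy of $\tlie{sl}_3^\tau$ inside $\tlie g^\sigma$. To match this copy with the specific spanning set given in the statement, one invokes relation \eqref{e:x2Rs} to see that $x^\pm_{\alpha+\sigma(\alpha),1}$ is a multiple of the bracket of $x^\pm_{\alpha,0}$ with $x^\pm_{\alpha, 1}$, together with Lemma \ref{[]tw}(b) (the $R_s$ and $2R_s$ branches) and (c) (the $A_{2n}, R_s$ branch). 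The hardest step will be the careful bookkeeping of the $\sqrt 2$ normalizations in the definition of $x^\pm_{\alpha,\epsilon}$, the factor of $2$ in $\hbar_{\alpha,\epsilon}$ for short roots, the delta $\delta_{\epsilon\epsilon', 1}$ appearing in Lemma \ref{[]tw}(b), and the choice $s=1$ in \eqref{e:x2Rs}, which must conspire to reproduce the twisted affine relations for $A_2^{(2)}$.
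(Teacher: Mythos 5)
The paper does not actually present a proof of this lemma; it says it is ``easily established'' and refers to \cite{bia:phd} and \cite{CFS}. Your sketch is the expected, essentially standard argument, and the overall strategy (direct verification of closure and of the $\lie{sl}_2$-relations via Lemma~\ref{[]tw} in the first three cases; a structural argument reducing the $A_{2n}$, $\alpha\in R_s$ case to the observation that $\sigma$ restricts to the diagram automorphism of the $\lie{sl}_3$-subalgebra generated by $x^\pm_\alpha$, $x^\pm_{\sigma(\alpha)}$) is sound.

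Two points are worth tightening. First, for the non-$A_{2n}$ long-root case the map you wrote is not a map defined on all of $\tlie{sl}_2$: it is supported only on $r\in m\mathbb Z$. What you want is the linear isomorphism $e\otimes t^k\mapsto x^+_{\alpha,0}\otimes t^{mk}$, $f\otimes t^k\mapsto x^-_{\alpha,0}\otimes t^{mk}$, $h\otimes t^k\mapsto h_{\alpha,0}\otimes t^{mk}$ (that is, $\tlie{sl}_2\cong\lie{sl}_2\otimes\mathbb C[t^m,t^{-m}]$ via $t\mapsto t^m$, composed with the obvious embedding). In the short-root cases the analogous correction is to use the bijection $k\mapsto mk-\epsilon$ between $\mathbb Z$ and the residue class $-\epsilon\pmod m$, so that the map really is a bijection $\tlie{sl}_2\to$ (claimed span). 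Second, for $m=3$ (type $D_4$), your justification of $[h_{\alpha,\epsilon'},x^\pm_{\alpha,\epsilon}]=\pm2\,x^\pm_{\alpha,\overline{\epsilon+\epsilon'}}$ relies on Lemma~\ref{[]tw}(a) ($\epsilon'=0$) and (c) ($\epsilon'=1$); the case $\epsilon'=2$ is not covered by the statement of that lemma. This gap is easily closed: either compute $[h_{\alpha,2},x^\pm_{\alpha,\epsilon}]$ directly from the definitions using the mutual orthogonality of $\{\sigma^j(\alpha)\}$, or observe that replacing $\zeta$ by $\zeta^2$ (another primitive cube root of unity) turns $h_{\alpha,1}$ into $h_{\alpha,2}$ and apply Lemma~\ref{[]tw}(c) to the relabelled eigenspaces. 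With those two points addressed, the sketch is a complete argument.
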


\subsection{Hyperalgebras}

For an associative algebra $A$ over a field of characteristic zero, $a\in A$, and $k\in\mathbb Z_+$, we set $a^{(k)}=\frac{a^k}{k!}$ and $\tbinom{a}{k} = \frac{a(a-1)\cdots(a-k+1)}{k!}$.
We shall need the following two identities. Suppose that $x_1,\dots,x_r$ are commuting elements of  $A$. Then,
\begin{equation}\label{e:divpsum}
\left(\tsum_{j= 1}^{r} x_j\right)^{(n)} = \tsum_{\substack{(n_1,\dots,n_r)\in\mathbb Z_{\ge 0}^m\\ n_1+\cdots + n_r=n}}^{} (x_1)^{(n_1)}\cdots (x_r)^{(n_r)}
\end{equation}
and
\begin{equation}\label{e:expbin}
\binom{x_1+x_2}{m} = \tsum_{j=1}^m \binom{x_1}{j}\binom{x_2}{m-j}.
\end{equation}

Let $U(\lie a)$ denote the universal enveloping algebra of a Lie algebra $\lie a$.
The PBW theorem implies that the multiplication establishes isomorphisms
$$U(\tlie g)\cong U(\tlie n^-)\otimes U(\tlie h)\otimes U(\tlie n^+) \quad\text{and}\quad U(\tlie g^\sigma)\cong U((\tlie n^-)^\sigma)\otimes U(\tlie h^\sigma)\otimes U((\tlie n^+)^\sigma).$$

Given $\alpha\in R^+$, consider the following power series with coefficients in $U(\tlie h)$:
$$\Lambda_{\alpha}^{\pm}(u) = \tsum_{r=0}^\infty \Lambda_{\alpha,\pm r}u^r = \exp \left(-\tsum_{s=1}^{\infty}\frac{h_{\alpha}\otimes t^{\pm s}}{s} u^{s}\right).$$
We will also need to make use of  elements $\Lambda_{\alpha,r;k}\in U(\tlie h)$, $i\in I, r,k\in\mathbb Z, r\ne 0, k>0$, defined as follows. Consider the endomorphism $\tau_k$ of $U(\tlie g)$ induced by $t\mapsto t^k$ and set
\begin{equation*}
\Lambda_{\alpha;k}^\pm (u) = \tsum_{r\ge 0}^{} \Lambda_{\alpha,\pm r;k}u^r = \tau_k(\Lambda_{\alpha}^\pm(u)) = \exp\left(-\tsum_{s>0}^{} \frac{h_{i}\otimes t^{\pm sk}}{s}u^s\right).
\end{equation*}

We now define the twisted analogues of the above elements. If either $\lie g$ is not of type $A_{2n}$ and $\mu \in R_s$, or $\lie g$ is of type $A_{2n}$ and $\mu\in R_0^+$, set
$$\Lambda_{\mu}^{\sigma,\pm}(u)  = \tsum_{r=0}^\infty \Lambda_{\mu,\pm r}^\sigma u^r =\text{exp} \left( - \tsum_{k=1}^{\infty} \tsum_{\epsilon=0}^{m-1}\frac{h_{\mu,\pm\epsilon} \otimes t^{\pm(mk-\epsilon)}}{mk-\epsilon} u^{mk-\epsilon}\right),$$
and, if $\lie g$ is not of type $A_{2n}$ and $\mu \in R_l$, we set
$$\Lambda_{\mu}^{\sigma,\pm}(u)  = \tsum_{r=0}^\infty \Lambda_{\mu,\pm r}^\sigma u^r = \text{exp} \left( - \tsum_{k=1}^{\infty} \frac{h_{\mu,0} \otimes t^{\pm mk}}{k} u^{k}\right). $$
We may write $\Lambda_{i,r}$ in place of $\Lambda_{\alpha_i,r}$ and similarly for $\Lambda^\sigma_{\alpha_i,r}$.
One can easily check the following relation among twisted and non-twisted versions of the above elements. Given $\mu\in R_0^+$, let $\alpha\in O$ such that $\alpha|_{\lie h_0}=\mu$. Then:
\begin{equation}\label{e:tLvsL}
\Lambda_\mu^{\sigma,\pm}(u)=
\begin{cases} \prod_{j=0}^{m -1} \Lambda_{\sigma^j(\alpha)}^\pm(\zeta^{m-j}u), & \text{if}\quad  \Gamma_\alpha =m,  \\
\Lambda_{\alpha;m}^\pm(u),   & \text{if}\quad  \Gamma_\alpha = 1.  \end{cases}
\end{equation}

Given an order on the Chevalley basis of $\tlie g^\sigma$ and a PBW monomial with respect to this order, we construct an ordered monomial in the elements of the set
$$\tilde{\cal M}^\sigma=\left\{(x^\pm_{\mu,-r}\otimes t^r)^{(k)}, \ \Lambda_{i,k}^{\sigma,\pm}, \ \tbinom{h_{i,0}}{k}  :   \mu\in\wt(\lie g_{-r})\cap Q_0^+\setminus\{0\}, i\in I_0, r\in \mathbb Z, k\in \mathbb Z_+\right\}\setminus\{0\},$$
using the correspondence $(x^\pm_{\mu,-r}\otimes t^r)^k \leftrightarrow (x^\pm_{\mu,-r}\otimes t^r)^{(k)}$, $h_{i,0}^k \leftrightarrow \tbinom{h_{i,0}}{k}$ and $(h_{i,-r}\otimes t^r)^k \leftrightarrow (\Lambda_{i,r}^\sigma)^k$ for $r\ne 0$.
Using the obvious similar correspondence we consider monomials in $U(\lie g)$ formed by elements of
$$\cal M=\left\{ (x_{\alpha}^\pm)^{(k)},\tbinom{h_i}{k}  :  \alpha\in R^+, i\in I, k\in\mathbb Z_+\right\}$$
and in $U(\tlie g)$ formed by elements of
$$\tilde{\cal M}=\left\{(x^\pm_{\alpha}\otimes t^r)^{(k)}, \Lambda_{i,k}^{\pm},\tbinom{h_{i}}{k}  :  \alpha \in R^+, i\in I, k\in \mathbb Z_+, r\in \mathbb Z\right\}.$$
Notice that $\cal M$ can be naturally regarded as a subset of $\tilde{\cal M}$. The set of ordered monomials thus obtained are clearly basis of $U(\tlie g^\sigma)$, $U(\lie g)$, and $U(\tlie g)$, respectively.

Let  $U_{\mathbb Z}(\lie g) \subseteq U(\lie g)$, $U_{\mathbb Z}(\tlie g) \subseteq U(\tlie g)$, and $U_{\mathbb Z}(\tlie g^\sigma)\subseteq U(\tlie g^\sigma)$ be the $\mathbb Z$--subalgebras generated respectively by $\{(x_{\alpha}^\pm)^{(k)}  :  \alpha\in R^+, k\in\mathbb Z_+\}$, $\{(x^\pm_{\alpha}\otimes t^r)^{(k)} :  \alpha \in R^+,r\in\mathbb Z, k\in\mathbb Z_+\}$, and $\{(x^\pm_{\mu,-r}\otimes t^r)^{(k)} : \mu\in\wt(\lie g_{-r})\cap Q_0^+\setminus\{0\}, r\in\mathbb Z, k\in\mathbb Z_+\}$. The following crucial theorem was proved in \cite{kosagz}, in the $U(\lie g)$ case, and in \cite{G,mitz} for the $U(\tlie g)$ and $U(\tlie g^\sigma)$ cases (see also \cite{shari}).

\begin{thm} \label{forms}
The subalgebras $U_{\mathbb Z}(\tlie g^\sigma)$, $U_{\mathbb Z}(\lie g)$, and $U_{\mathbb Z}(\tlie g)$ are free $\mathbb Z$-modules and the sets of ordered monomials constructed from $\tilde{\cal M}^\sigma$, $\cal M$, and $\tilde{\cal M}$ are $\mathbb Z$-basis of $U_{\mathbb Z}(\tlie g^\sigma)$, $U_{\mathbb Z}(\lie g)$, and $U_{\mathbb Z}(\tlie g)$, respectively.\hfill\qedsymbol
\end{thm}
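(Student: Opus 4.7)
My plan is to prove each of the three assertions by the standard PBW straightening strategy, which splits into two parts: (i) the ordered monomials in the prescribed generating sets are $\mathbb{Z}$-linearly independent, and (ii) they span the corresponding integral form over $\mathbb{Z}$. Part (i) is essentially free in each case: since $U_\mathbb{Z}(\lie g)$, $U_\mathbb{Z}(\tlie g)$, and $U_\mathbb{Z}(\tlie g^\sigma)$ sit inside the associated complex universal enveloping algebras, the ordered monomials in $\mathcal{M}$, $\tilde{\mathcal{M}}$, and $\tilde{\mathcal{M}}^\sigma$ differ from classical ordered PBW monomials only by positive rational scalars of the form $1/k!$, and the complex PBW theorem gives their linear independence. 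All the content of Theorem \ref{forms} lies in part (ii).

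For part (ii), I would argue by induction on the length of a word in the chosen generators, reducing to the task of rewriting a single out-of-order product of two generators as a $\mathbb{Z}$-linear combination of ordered monomials in the prescribed set. In the $U_\mathbb{Z}(\lie g)$ case this is Kostant's classical argument, whose essential ingredient is the identity
\begin{equation*}
(x_\alpha^+)^{(k)}(x_\alpha^-)^{(l)} = \sum_{j=0}^{\min(k,l)} (x_\alpha^-)^{(l-j)}\binom{h_\alpha-k-l+2j}{j}(x_\alpha^+)^{(k-j)}
\end{equation*}
together with the fact that $[(x_\alpha^+)^{(k)},(x_\beta^-)^{(l)}]$ lies in the $\mathbb{Z}$-span of $\mathcal{M}$ when $\alpha\ne\beta$; the binomial identity \eqref{e:expbin} is what makes the subalgebra generated by the $\binom{h_i}{k}$ integrally closed. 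For $U_\mathbb{Z}(\tlie g)$ this is Garland's theorem, and the key extra step is to verify that whenever one commutes $(x_\alpha^+\otimes t^a)^{(k)}$ past $(x_\alpha^-\otimes t^b)^{(l)}$ the "Cartan-like" terms that emerge can be expressed as $\mathbb{Z}$-polynomial combinations of the $\Lambda_{i,r}$ and $\binom{h_i}{k}$; the elements $\Lambda_{\alpha,r}$ were defined via the generating series precisely so that this closure holds, and identity \eqref{e:divpsum} is used to expand divided powers of sums.

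For the twisted case $U_\mathbb{Z}(\tlie g^\sigma)$ I would use Lemma \ref{isos} to localize the verification of each straightening relation inside one of the rank-one subalgebras isomorphic to $\tlie{sl}_2$ (for long roots, and for short roots outside type $A_{2n}$) or isomorphic to $\tlie{sl}_3^\tau$ (for short roots in type $A_{2n}$). In the $\tlie{sl}_2$ sub-cases, Garland's identities carry over with only cosmetic twists in the exponents of $t$. The main obstacle, and the technical heart of Mitzman's argument, is the $A_2^{(2)}$ sub-case: here Lemma \ref{[]tw}(c) and the rescaling of $h_{j,0}$ recorded in Remark \ref{r:noCbforA2n} already signal that certain brackets come with factors of $2$ or $3$, and the specific choice of $O$ (and of $s=1$ in \eqref{e:x2Rs}, and of the $D_4^{(3)}$ representatives $O_i$) is forced on us in order that the $\Lambda^\sigma_{\mu,r}$ defined via \eqref{e:tLvsL} actually lie in the $\mathbb{Z}$-algebra generated by the divided powers $(x^\pm_{\mu,-r}\otimes t^r)^{(k)}$. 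Once this closure is verified by direct computation (via the exponential-of-series manipulations that define $\Lambda^{\sigma,\pm}_\mu(u)$), the same inductive straightening argument as in the non-twisted case applies and yields the spanning statement, completing the proof of all three parts of the theorem.
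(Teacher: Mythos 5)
The paper does not actually prove Theorem~\ref{forms}: it is stated as a citation theorem, with the proofs attributed to Kostant (for $U_{\mathbb Z}(\lie g)$), Garland (for $U_{\mathbb Z}(\tlie g)$), and Mitzman (for $U_{\mathbb Z}(\tlie g^\sigma)$), and the remark immediately following it only records that the stated monomials span integral forms of the complex enveloping algebras. So there is no in-paper argument against which to compare yours step by step; what can be said is that your outline faithfully captures the architecture of those references: linear independence inherited from the complex PBW theorem, and spanning by commutator straightening, using \eqref{e:comutx+x-} and \eqref{e:expbin} to control the rank-one Cartan contributions, Garland's $\Lambda$-series to close the Cartan part of $U_\mathbb{Z}(\tlie h)$, and rank-one reduction via the subalgebras of Lemma~\ref{isos}, with $A_2^{(2)}$ (the Heisenberg subcase, Lemma~\ref{basicreltw}(c)) as the technical crux.

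One point deserves a caveat. You describe the ordered monomials from $\tilde{\cal M}$ and $\tilde{\cal M}^\sigma$ as differing from classical PBW monomials only by factors $1/k!$; this is accurate for the divided powers of the root vectors, but the elements $\Lambda_{i,r}$ and $\Lambda^\sigma_{i,r}$ are not simple rescalings of $h_i\otimes t^r$ \textemdash\ they are defined by exponential generating series and relate to the symmetric-algebra basis by a unipotent triangular change of coordinates. The conclusion (linear independence) still holds, since that change of coordinates is invertible over $\mathbb{C}$ and filtration-preserving, but if this proof were being written out in full you would need to make that triangularity argument explicit rather than treating the $\Lambda$'s as rescaled monomials. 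Likewise, reduction to rank one handles the straightening of $(x^+_{\alpha}\otimes t^a)^{(k)}$ past $(x^-_{\alpha}\otimes t^b)^{(l)}$ for the \emph{same} $\alpha$; for distinct roots and for same-sign pairs you additionally need a degree induction of the type recorded in Lemma~\ref{sbrw}, which you did not mention but which is part of the hidden content of ``the same inductive straightening argument applies.'' Neither point undermines the sketch, but both are exactly the places where Garland's and Mitzman's actual proofs spend most of their effort.
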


In particular, it follows that the natural map $\mathbb C\otimes_\mathbb ZU_\mathbb Z(\tlie g^\sigma)\to U(\tlie g^\sigma)$ is an isomorphism, i.e., $U_\mathbb Z(\tlie g^\sigma)$ is a an integral form of $U(\tlie g^\sigma)$, and similarly for $U_\mathbb Z(\tlie g)$ and $U_\mathbb Z(\lie g)$. If $\lie a$ is a subalgebra of $\lie g$ preserved by $\sigma$, set
\begin{equation*}
U_\mathbb Z(\tlie a^\sigma) = U(\tlie a^\sigma)\cap U_\mathbb Z(\tlie g^\sigma)
\end{equation*}
and similarly define $U_\mathbb Z(\lie a)$ and $U_\mathbb Z(\tlie a)$ for any $\lie a$ subalgebra of $\lie g$. Then,
\begin{equation*}
\lie a\in\{ \lie g_0,\lie n_0^\pm,\lie h_\epsilon, \tlie h^\sigma_\epsilon,(\tlie n^\pm)^\sigma, \lie n^\pm,\lie h, \tlie n^\pm,\tlie h\} \quad\Rightarrow\quad \mathbb C\otimes_\mathbb Z U_\mathbb Z(\lie a)\stackrel{_\cong}{\longrightarrow} U(\lie a).
\end{equation*}
In fact, $U_\mathbb Z(\lie a)$ is a free $\mathbb Z$-module spanned by monomials formed by elements of $\tilde{\cal M}^\sigma$ (or of $\tilde{\cal M}$) belonging to $U(\lie a)$.

\begin{rem}\label{r:KinGM}
Notice that $U_\mathbb Z(\lie g)=U(\lie g)\cap U_\mathbb Z(\tlie g)$, i.e., Kostant's integral form of $\lie g$ coincides with its intersection with Garland's integral form of $U(\tlie g)$ which allows us to regard $U_\mathbb Z(\lie g)$ as a $\mathbb Z$-subalgebra of $U_\mathbb Z(\tlie g)$. If $\lie g$ is not of type $A_{2n}$, then $U_\mathbb Z(\lie g_0)$ coincides with Kostant's integral form of $\lie g_0$ as well. However, if $\lie g$ is of type $A_{2n}$ this is not true for the reason described in Remark \ref{r:noCbforA2n}.
\end{rem}

Given a field  $\mathbb F$, define the $\mathbb F$--hyperalgebra of $\lie a$ by
$$U_\mathbb F(\lie a) =  \mathbb F\otimes_{\mathbb Z}U_\mathbb Z(\lie a)$$
where $\lie a$ is any of the subspaces considered above. We will refer to $U_\mathbb F(\tlie g^\sigma)$  as the twisted hyper loop algebra of $\lie g$ associated to $\sigma$ over $\mathbb F$.
Clearly, if the characteristic of $\mathbb F$ is zero, the algebra $U_\mathbb F(\tlie g^\sigma)$ is naturally isomorphic to $U(\tlie g_\mathbb F^\sigma)$ where $\tlie g_\mathbb F^\sigma=  \mathbb F\otimes_\mathbb Z\tlie g_\mathbb Z^\sigma$ and $\tlie g_\mathbb Z^\sigma$ is the $\mathbb Z$-span of the Chevalley basis of $\tlie g^\sigma$, and similarly for all algebras $\lie a$ we have considered. For fields of positive characteristic we just have an algebra homomorphism $U(\lie a_\mathbb F)\to U_\mathbb F(\lie a)$ which is neither injective nor surjective. We will keep denoting by $x$ the image of an element $x\in U_\mathbb Z(\lie a)$ in $U_\mathbb F(\lie a)$. Notice that we have
$$U_\mathbb F(\tlie g)=U_\mathbb F(\tlie n^-)U_\mathbb F(\tlie h)U_\mathbb F(\tlie n^+) \quad  \text{ and } \quad  U_\mathbb F(\tlie g^\sigma)=U_\mathbb F((\tlie n^-)^\sigma)U_\mathbb F(\tlie h^\sigma)U_\mathbb F((\tlie n^+)^\sigma).$$

\begin{rem}
If $\lie g$ is of type $A_{2n}$ and the characteristic of $\mathbb F$ is 2, then $U_\mathbb F(\lie g_0)$ is not isomorphic to what is usually called the hyperalgebra of $\lie g_0$ over $\mathbb F$ (which is constructed using Kostant's integral form of $U(\lie g_0)$). Indeed, if $i\in I_0$ is the unique element such that $\alpha_i\in R_s$, then $[x_{\alpha_i,0}^+,x_{\alpha_i,0}^-]=\hbar_{o(i),\epsilon}=2h_{i,0}=0$ in $U_\mathbb F(\lie g_0)$, but this is not zero in the usual hyperalgebra of $\lie g_0$ over $\mathbb F$. On the other hand, if the characteristic of $\mathbb F$ is not 2, then $U_\mathbb F(\lie g_0)$ is isomorphic to the usual hyperalgebra of $\lie g_0$ over $\mathbb F$ (details can be found in \cite{bia:phd}). For this reason, we shall not work with fields of characteristic 2 when $\lie g$ is of type $A_{2n}$.
\end{rem}

\begin{rem}\label{r:Lambda3i}
It was proved in \cite{G} that the elements $\Lambda_{i,r;k}$ are part of a $\mathbb Z$-basis of the free $\mathbb Z$-module $U_\mathbb Z(\tlie h)$. In particular, their images in $U_\mathbb F(\tlie g)$ are nonzero.
\end{rem}

\subsection{Hopf algebra structure}

We denote by $\Delta, S, \epsilon$ the usual comultiplication, antipode, and counit of $U(\lie a)$ and let $U(\lie g)^0$ denote the augmentation ideal, i.e., the kernel of $\epsilon$. Set also $U_\mathbb Z(\lie a)^0=U_\mathbb Z(\lie a)\cap U(\lie a)^0$.

Notice that the Hopf algebra structure on $U(\tlie g)$ induces a Hopf algebra structure over $\mathbb Z$ on $U_\mathbb Z(\lie a)$ for $\lie a\in\{ \lie g, \lie n^\pm, \lie h,\tlie g,\tlie n^\pm, \tlie h,\tlie g^\sigma,(\tlie n^\pm)^\sigma, \tlie h^\sigma\}$. This in turn induces a Hopf algebra structure on $U_\mathbb F(\lie a)$. Notice also that, if $x\in\lie a$ is such that $x^{(k)}\in U_\mathbb Z(\lie a)$ for all $k>0$, then
\begin{gather}\label{e:comultdp}
\Delta(x^{(k)})= \tsum_{l=0}^{k} x^{(l)}\otimes x^{(l-k)}.
\end{gather}
Also,  for all $k>0$ and $\mu\in R_0^+$, we have
\begin{equation}\label{e:comutLambdaser}
\Delta(\Lambda_{\alpha;k}^\pm(u)) = \Lambda_{\alpha;k}^\pm(u)\otimes \Lambda_{\alpha;k}^\pm(u) \qquad\text{and}\qquad \Delta(\Lambda_\mu^{\sigma,\pm}(u)) = \Lambda_\mu^{\sigma,\pm}(u) \otimes \Lambda_\mu^{\sigma,\pm}(u).
\end{equation}
Notice that \eqref{e:comutLambdaser} is equivalent to
\begin{equation*}
\Delta(\Lambda_{\alpha,\pm r;k}) = \sum_{s=0}^r \Lambda_{\alpha,\pm s;k}\otimes\Lambda_{\alpha,\pm (r-s);k} \qquad\text{and}\qquad  \Delta(\Lambda_{\mu,\pm r}^\sigma) = \sum_{s=0}^r \Lambda_{\mu,\pm s}^\sigma\otimes\Lambda_{\mu,\pm (r-s)}^\sigma
\end{equation*}
for all $r\ge 0$.

\subsection{Some auxiliary identities}

The next two lemmas are crucial in the proof of Theorem \ref{forms} and will also be crucial in the study of finite-dimensional representations of hyper loop algebras. Lemma \ref{basicrel} was originally proved in the $U(\tlie g)$ setting in \cite[Lemma 7.5]{G} (our statement follows that of \cite[Lemma 1.3]{hyperlar}). Given $\alpha\in R^+$ and $s\in \mathbb Z$, consider the following power series with coefficients in $U(\tlie n^-)$:
$$X_{\alpha;s,\pm}(u) = \tsum_{r=1}^\infty  (x^-_{\alpha}\otimes t^{\pm(r+s)}) u^{r}.$$

\begin{lem} \label{basicrel} Let $\alpha \in R^+$, $k,l \in \mathbb N$. Then,
$$(x^+_{\alpha}\otimes t^{\mp s})^{(l)}(x^-_{\alpha}\otimes t^{\pm(s+1)})^{(k)} = (-1)^l \left((X_{\alpha;s,\pm}^-(u))^{(k-l)}\Lambda_{\alpha}^{\pm}(u)\right)_k \mod U(\tlie g)_\mathbb Z U(\tlie n^+)_\mathbb Z^0,$$
where the subindex $k$ means the coefficient of $u^k$ of the above power series.\hfill\qedsymbol
\end{lem}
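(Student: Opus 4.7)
The plan is to reduce the identity to a calculation inside the $\tlie{sl}_2$-subalgebra of $\tlie g$ generated by $x_\alpha^\pm\otimes t^r$ and $h_\alpha\otimes t^r$ for $r\in\mathbb{Z}$. Every ingredient of the claimed equality, together with the intersection of the ideal $U(\tlie g)_\mathbb{Z} U(\tlie n^+)_\mathbb{Z}^0$ with this subalgebra, depends only on the root $\alpha$, so the statement reduces to an identity in $U(\tlie{sl}_2)$ and I may drop the subscript $\alpha$ throughout.

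Inside $U(\tlie{sl}_2)$, I would induct on $l$, keeping $k$ free. The base case $l=0$ is immediate: the summation defining $X_{s,\pm}(u)$ starts at $r=1$, so by \eqref{e:divpsum} the only contribution to the coefficient of $u^k$ in $(X_{s,\pm}(u))^{(k)}$ comes from the term in which each of the $k$ factors supplies $u^1$; this produces $(x^-\otimes t^{\pm(s+1)})^{(k)}$, and since $\Lambda^\pm(u)$ has constant term $1$ no further contribution arises, matching the left-hand side.

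For the inductive step I would write $(x^+\otimes t^{\mp s})^{(l)} = \frac{1}{l}\,(x^+\otimes t^{\mp s})\cdot(x^+\otimes t^{\mp s})^{(l-1)}$, insert the inductive hypothesis on the rightmost factor, and then sweep the single copy of $x^+\otimes t^{\mp s}$ from the left through the resulting generating series. The key identity that drives all the bookkeeping is
\[
[x^+\otimes t^{\mp s},\,X_{s,\pm}(u)] \;=\; \sum_{r\geq 1}(h\otimes t^{\pm r})\,u^r \;=\; -u\,\tfrac{d}{du}\log\Lambda^\pm(u),
\]
which identifies the bracket with the logarithmic derivative of the $\Lambda$-series. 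Via $\Lambda'=(\log\Lambda)'\,\Lambda$, each time $x^+\otimes t^{\mp s}$ passes one copy of $X_{s,\pm}(u)$ the exponent of the $X$-series drops from $k-l+1$ to $k-l$ and the commutator contributes exactly the factor needed to cancel the $1/l$ in front and account for one more sign. Commutators of $x^+\otimes t^{\mp s}$ with $\Lambda^\pm(u)$ itself yield terms with $x^+\otimes t^{\mp r}$ to the right, which lie in $U(\tlie g)_\mathbb{Z} U(\tlie n^+)_\mathbb{Z}^0$ and may be discarded.

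The principal obstacle is managing the combinatorics of matching coefficients of $u^k$ in the presence of all the divided-power factors produced while the $l$ copies of $x^+\otimes t^{\mp s}$ sweep across the $X_{s,\pm}$- and $\Lambda^\pm$-factors: one must expand $(X_{s,\pm}(u))^{(k-l+1)}$ using \eqref{e:divpsum} and recombine via \eqref{e:expbin}-type identities to extract the right coefficient cleanly. This is exactly the content of Garland's original argument, so I would follow the structure of \cite[Lemma 7.5]{G} (as restated in \cite[Lemma 1.3]{hyperlar}) essentially verbatim; the twisted context does not enter here, since the identity involves only the single non-twisted root $\alpha$ and the lemma is used merely as a technical tool for what follows.
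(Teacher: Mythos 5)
The paper gives no proof of Lemma \ref{basicrel}; it is imported wholesale from Garland, with the statement following \cite[Lemma 1.3]{hyperlar}, so the honest answer here is precisely your last paragraph: cite \cite[Lemma 7.5]{G}. Your sketch of the argument (reduce to the $\tlie{sl}_2$-subalgebra attached to $\alpha$, induct on $l$, and drive the bookkeeping with the identity $[x^+\otimes t^{\mp s},X_{\alpha;s,\pm}(u)]=\sum_{r\ge 1}(h_\alpha\otimes t^{\pm r})u^r=-u\,\tfrac{d}{du}\log\Lambda_\alpha^\pm(u)$) is a faithful summary of Garland's proof; the only place where your one-line description is looser than the real argument is the claim that sweeping $x^+\otimes t^{\mp s}$ past one $X$-factor simply drops the divided-power exponent by one --- since $[x^+\otimes t^{\mp s},X_{\alpha;s,\pm}(u)]$ does \emph{not} commute with $X_{\alpha;s,\pm}(u)$, there are secondary commutator terms to control, but you correctly flag this as ``the principal obstacle'' and defer it to Garland.
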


The next lemma establishes the ``twisted version'' of Lemma \ref{basicrel}. If either $\lie g$ is not of type $A_{2n}$ and $\mu \in R_s$, or $\lie g$ is of type $A_{2n}$ and $\mu \in R_0^+$,  define
$$ X^{\sigma}_{\mu;s,\pm}(u) = \tsum_{k=1}^\infty \tsum_{\epsilon =0}^{m-1} (x^-_{\mu,\mp(s-\epsilon)} \otimes t^{\pm(mk+s-\epsilon)}) u^{mk-\epsilon},$$
for all $s\in \mathbb Z$. For $\lie g$ not of type $A_{2n}$ and $\mu \in R_l$, set
$$ X^{\sigma}_{\mu;ms,\pm}(u) = \tsum_{k=1}^\infty (x^-_{\mu,0} \otimes t^{\pm(m(k+s))}) u^{k},$$
for all $s\in \mathbb Z$.

\begin{lem} \label{basicreltw} Let $l,k,s\in\mathbb Z$ and $0\le l\le k$.
\begin{enumerate}[(a)]
\item If either $\lie g$ is not of type $A_{2n}$ and $\mu \in R_s$, or if $\lie g$ is of type $A_{2n}$ and $\mu \in R_l$, we have
$$(x_{\mu,\pm s}^+\otimes t^{\mp s})^{(l)}_{} (x_{\mu,\mp (s+1)}^-\otimes t^{\pm(s+1)})^{(k)} =(-1)^l \left( ( X^{\sigma}_{\mu;s,\pm}(u))^{(k-l)}(u)   \Lambda_{\mu}^{\sigma,\pm}(u) \right)_{k} \mod U_\mathbb Z(\tlie{g}^\sigma)U_\mathbb Z((\tlie{n}^+)^\sigma)^0.$$

\item If $\lie g$ is not of type $A_{2n}$ and $\mu \in R_l$, we have
$$(x_{\mu,0}^+\otimes t^{\mp ms)})^{(l)} (x_{\mu,0}^-\otimes t^{\pm m(s+1)})^{ (k)} =(-1)^l \left( (X^{\sigma}_{\mu;ms,\pm}(u))^{(k-l)}   \Lambda_{\mu}^{\sigma,\pm}(u) \right)_{k}  \mod U_\mathbb Z(\tlie{g}^\sigma)U_\mathbb Z((\tlie{n}^+)^\sigma)^0.$$

\item If $\lie g$ is of type $A_{2n}$ and $\mu\in R_s$, we have
\begin{enumerit}
    \item $(x_{\mu,0}^+\otimes t^{\pm s})^{ (2k-a)}  (x_{2\mu,1}^-\otimes t^{\mp(2s-1)})^{(k)}  =- \left((X^{\sigma}_{\mu;0,\pm}(u))^{(a)} \Lambda_{\mu}^{\sigma,\pm}(u) \right)_{k} \mod U_\mathbb Z(\tlie{g}^\sigma)U_\mathbb Z((\tlie{n}^+)^\sigma)^0$

    for $a=0,1$.

  \item $\begin{array}{lcl}(x_{\mu,0}^+\otimes 1)^{(2k)}  (x_{2\mu,1}^-\otimes t)^{(k+r)} &=& (x^-_{2\mu,1}\otimes t^{})^{(r)}\Lambda_{\mu,k}^{\sigma}+ \sum_{j=1}^{k-1} X_{j} \Lambda_{\mu,k-j}^{\sigma} \; \mod U_\mathbb Z(\tlie{g}^\sigma)U_\mathbb Z((\tlie{n}^+)^\sigma)^0  \end{array}$

where $r\in \mathbb Z_+$  and $X_{j}$ is a certain $\mathbb Z$-linear combination of elements of the form
$$(x^-_{\mu, 1} \otimes t^{})^{(s_1)}\cdots (x^-_{\mu,k} \otimes t^{k})^{(s_{k})}(x^-_{2\mu,1} \otimes t)^{(s_1')}\cdots (x^-_{2\mu,1} \otimes t^{k})^{(s_{k'})}$$  with  $\sum_m s_m+2\sum_n s_n = 2r$ and $\sum_m ms_m'+\sum_n ns_n = r+j$.

  \item $(x_{2\mu,1}^+\otimes t^{\mp (2s+1))})^{(l)} (x_{2\mu,1}^-\otimes t^{\pm (2s+3)})^{(k)} =(-1)^l \left( (X^{\sigma}_{2\mu;ms,\pm}(u))^{(k-l)}   \Lambda_{2\mu}^{\sigma,\pm}(u) \right)_{k}$

      $\mod U_\mathbb Z(\tlie{g}^\sigma)U_\mathbb Z((\tlie{n}^+)^\sigma)^0$, where $$X^{\sigma}_{2\mu;ms,\pm}(u) = \tsum_{k=1}^{} x_{\mu,1}\otimes t^{\pm(m(k+s)+1)}u^k \quad\text{and}\quad  \Lambda_{2\mu}^{\sigma,\pm}(u) = \text{exp} \left( - \tsum_{k=1}^{\infty} \frac{h_{\mu,0} \otimes t^{\pm mk}}{k} u^{k}\right).$$

  \item $(x_{2\mu,1}^+\otimes t^{1-2s})^{(rk)}  (x_{\mu,1}^-\otimes t^{s})^{(2rk+r)} = \sum_{j=1}^{k+1} (x^-_{\mu,s+j-1}\otimes t^{s+j-1})^{(r)} \Lambda^\sigma_{rk+r-rj}+  X $ $\mod U_\mathbb Z(\tlie{g}^\sigma)U_\mathbb Z((\tlie{n}^+)^\sigma)^0$, where $X$ is a sum of elements belonging to
$(\tprod_{i}^{}(x^-_{2\mu, -a_{i}} \otimes t^{a_i})^{(r_i)})(\tprod_{j}^{}(x^-_{\mu,-b_j} \otimes t^{b_j})^{(s_j)})U_\mathbb F(\tlie h^\sigma)$ with $a_i,b_j \in \mathbb Z$ and $r_i,s_j\in \mathbb Z_+$ satisfying $0\le r_i,s_j < k$.
\end{enumerit}
\end{enumerate}

\end{lem}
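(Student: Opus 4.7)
The strategy is reduction to low-rank (in fact, rank-one or rank-two twisted) subalgebras, exploiting Lemma~\ref{isos}. The point is that every identity appearing in parts (a)--(c) can be localized to the subalgebra of $\tlie{g}^\sigma$ generated by the raising and lowering symbols attached to a single restricted root $\mu$. By Lemma~\ref{isos}, this subalgebra is isomorphic either to $\tlie{sl}_2$ (untwisted) or, in the $A_{2n}$ short-root situation, to $\tlie{sl}_3^\tau$, which is the twisted loop algebra of type $A_2^{(2)}$. Thus I would first prove all the identities inside these ``universal'' low-rank twisted loop algebras and then pull them back along the isomorphism, noting that the isomorphism sends the relevant generating set to the corresponding generating set, so it preserves the integral forms and the congruence modulo $U_{\mathbb Z}(\tlie g^\sigma) U_{\mathbb Z}((\tlie n^+)^\sigma)^0$.

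For part~(b), where $\lie g$ is not of type $A_{2n}$ and $\mu\in R_l$, the subalgebra from Lemma~\ref{isos}(a) is $\tlie{sl}_2$ obtained by the substitution $t\mapsto t^m$; thus Lemma~\ref{basicrel} applied to $\tlie{sl}_2$ and translated via $t\leftrightarrow t^m$ yields the claim, with $\Lambda_\alpha^\pm(u)\mapsto \Lambda_\mu^{\sigma,\pm}(u)$ and $X_{\alpha;s,\pm}(u)\mapsto X^\sigma_{\mu;ms,\pm}(u)$ by the second case of \eqref{e:tLvsL}. For part~(a), the subalgebra is the ``diagonal'' $\tlie{sl}_2$ spanned by $\{x^\pm_{\mu,\epsilon}\otimes t^{mk-\epsilon}\}$; under this isomorphism $x^\pm_{\mu,\pm s}\otimes t^{\mp s}\leftrightarrow x^\pm\otimes t^{\mp s}$ and $h_{\mu,\epsilon}\otimes t^{mk-\epsilon}\leftrightarrow h\otimes t^{mk-\epsilon}$, so the product of exponentials defining $\Lambda_\mu^{\sigma,\pm}(u)$ is precisely the image of $\Lambda^\pm(u)$ and $X^\sigma_{\mu;s,\pm}(u)$ is the image of $X_{\alpha;s,\pm}(u)$ after the reindexing $r=mk-\epsilon$. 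Extracting the coefficient of $u^k$ on both sides then gives the stated congruence.

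Part~(c) is the main obstacle. Here the subalgebra is $\tlie{sl}_3^\tau$ from Lemma~\ref{isos}(b), and the extra short-root vector $x^\pm_{2\mu,1}$ (roughly a commutator $[x^\pm_{\mu,0},x^\pm_{\mu,1}]$ by \eqref{e:x2Rs}) forces a richer family of identities. Subpart (iii) is still purely $\tlie{sl}_2$: the elements $x^\pm_{2\mu,1}\otimes t^{2k+1}$ span an $\tlie{sl}_2$-copy (with $[x^+_{2\mu,1},x^-_{2\mu,1}]$ a scalar multiple of $h_{\mu,0}$), so Lemma~\ref{basicrel} yields it after a renormalization. Subpart (i) mixes the two types of generators: one writes $x^-_{2\mu,1}\otimes t^{\mp(2s-1)}$ as a bracket of two short-root vectors, applies the identities of part~(a) twice inside the short-root $\tlie{sl}_2$, and collects. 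The hard identities are (ii) and (iv), which are iterated versions requiring a bookkeeping argument: one induct on $k$ (or on $r$), commuting $(x^+)^{(N)}$ through $(x^-_{2\mu,1})^{(\cdot)}$ and $(x^-_{\mu,\epsilon})^{(\cdot)}$ using the known bracket relations (Lemma~\ref{[]tw} together with the twisted Serre-type relations already implicit in $\tlie{sl}_3^\tau$), and using \eqref{e:divpsum} and \eqref{e:expbin} to collect divided powers. The terms that are not already of the displayed form get absorbed either into the auxiliary sums $X_j$ or $X$ (which by construction lie in the specified $\mathbb Z$-span) or into $U_\mathbb Z(\tlie g^\sigma) U_\mathbb Z((\tlie n^+)^\sigma)^0$ after moving any stray $(x^+)^{(\cdot)}$ factors to the right via the lower-case identities and applying the PBW theorem (Theorem~\ref{forms}).

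The real difficulty is thus combinatorial: in (ii) and (iv) one must verify that, after commutation and PBW reordering, the surviving coefficients are integers (and match the explicit $\Lambda^\sigma$-terms exhibited), while all other contributions fall into the stated auxiliary spaces. I would carry this out by induction on the total degree in $u$, using part~(i) and (iii) as the base of the induction and the fact that the Chevalley basis brackets in Lemma~\ref{[]tw} have integer structure constants, which ensures the integrality needed for the statement to hold modulo $U_\mathbb Z(\tlie g^\sigma) U_\mathbb Z((\tlie n^+)^\sigma)^0$ rather than merely over $\mathbb Q$.
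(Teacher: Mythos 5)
Your treatment of parts (a) and (b) agrees with the paper: both follow directly by transporting Lemma~\ref{basicrel} through the $\tlie{sl}_2$-isomorphisms supplied in Lemma~\ref{isos}(a) and~(b), with the substitutions $\Lambda_\alpha^\pm\mapsto\Lambda_\mu^{\sigma,\pm}$ and $X_{\alpha;s,\pm}\mapsto X^\sigma_{\mu;s,\pm}$ (or $X^\sigma_{\mu;ms,\pm}$), and extracting the coefficient of $u^k$. This is indeed how the paper handles (a) and (b).

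For part~(c), however, there is a real gap. The paper does not attempt any new computation at all: it observes that all four identities in (c) are special cases of an already-established result of Mitzman, namely \cite[Lemma~4.4.1(ii)]{mitz}, which works out these congruences inside the integral form of the universal enveloping algebra of the $A_{2}^{(2)}$-type twisted affine algebra (the $\tlie{sl}_3^\tau$ appearing in Lemma~\ref{isos}(b)). Your proposal correctly identifies that everything should be carried out inside $\tlie{sl}_3^\tau$, and subpart~(iii) does indeed reduce to an $\tlie{sl}_2$-copy as you say, but for subparts~(i), (ii), and~(iv) you describe only a \emph{program}: ``induct, commute, collect divided powers, and verify that the coefficients are integers and match the displayed $\Lambda^\sigma$-terms.'' That verification \emph{is} the content of the statement; nothing you have written actually establishes that the surviving terms are the ones exhibited in (ii) and (iv), that the error terms fall into the specified spans, or that everything stays in the $\mathbb Z$-form. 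Mitzman devotes a precise computation to this in his book, and without either carrying out that computation or citing it the argument for (c) is incomplete. Absent the citation, a correct blind proof would need to actually perform the straightening in $U_\mathbb Z(\tlie{sl}_3^\tau)$, which is substantially more work than your sketch acknowledges.
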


\proof Items (a) and (b) are direct consequences of Lemmas \ref{basicrel} and  \ref{isos}. All parts of (c) are particular consequences of \cite[Lemma 4.4.1(ii)]{mitz}.
\endproof

A proof of the following identity can be found in \cite[Lemma 26.2]{H}.
\begin{equation}\label{e:comutx+x-}
(x_{\alpha}^+)^{(l)}(x_{\alpha}^-)^{(k)} = \tsum_{m=0}^{\rm{min}\{k,l\}} (x_{\alpha}^-)^{(k-m)} \tbinom{h_{\alpha}-k-l+2m}{m}  (x_{\alpha}^+)^{(l-m)} \qquad\text{for all}\qquad k,l\ge 0, \alpha\in R^+.
\end{equation}
Lemmas \ref{basicrel} and \ref{basicreltw} can be regarded as ``loop'' versions of \eqref{e:comutx+x-}. It immediately follows that
\begin{equation*}
(x_{\alpha,0}^+)^{(l)}(x_{\alpha,0}^-)^{(k)} = \tsum_{m=0}^{\rm{min}\{k,l\}} (x_{\alpha,0}^-)^{(k-m)} \tbinom{\hbar_{\alpha,0}-k-l+2m}{m}  (x_{\alpha,0}^+)^{(l-m)} \qquad\text{for all}\qquad k,l\ge 0, \alpha\in R_0^+.
\end{equation*}
The next identity is easily deduced from Lemma \ref{[]tw}\eqref{[]tw-weight}.
\begin{equation}\label{comutxhtw}
\tbinom{h_{i,0}}{l}(x^\pm_{\mu,- r} \otimes t^r)^{(k)} = (x^\pm_{\mu,- r} \otimes t^r)^{(k)}\tbinom{h_{i,0}\pm k\mu(h_{i,0})}{l} \quad\text{for all}\quad k,l>0, r\in\mathbb Z, i\in I_0, x^\pm_{\mu,- r}\in \cal C^\sigma(O).
\end{equation}

Given $x^\pm_{\mu,- r}\in \cal C^\sigma(O)$ and $k\ge 0$, define the degree of $(x^\pm_{\mu,- r} \otimes t^r)^{(k)}$ to be $k$. For a monomial of the form $(x^\pm_{\mu_1,- r_1} \otimes t^{r_1})^{(k_1)} \cdots (x^\pm_{\mu_l,- r_l} \otimes t^{r_l})^{(k_l)}$ (choice of $\pm$ fixed) define its degree to be $k_1+\cdots +k_l$. The following result was proved in \cite[Lemma 4.2.13]{mitz}.

\begin{lem}\label{sbrw}
Let $r,s\in\mathbb Z, \mu\in\wt(\lie g_{-r}), \nu\in\wt(\lie g_{-s})$, and $k,l\in\mathbb Z_+$.  Then $(x^\pm_{\mu,- r} \otimes t^r)^{(k)}(x^\pm_{\nu,-s} \otimes t^s)^{(l)}$ is in the $\mathbb Z$--span of $(x^\pm_{\nu,-s} \otimes t^s)^{(l)}(x^\pm_{\mu,- r} \otimes t^r)^{(k)}$ together with monomials of degree strictly smaller than $k+l$. \hfill\qedsymbol
\end{lem}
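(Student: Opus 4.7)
Set $X:=x^\pm_{\mu,-r}\otimes t^r$ and $Y:=x^\pm_{\nu,-s}\otimes t^s$. Since both factors carry the same sign, both lie in the Lie subalgebra $(\tlie n^\pm)^\sigma$ of $\tlie g^\sigma$, and every computation can therefore be performed inside $U((\tlie n^\pm)^\sigma)$. By Theorem \ref{forms} applied to this subalgebra (as recorded in the display that follows it), $U_\mathbb Z((\tlie n^\pm)^\sigma)$ is a free $\mathbb Z$-module with basis the ordered divided-power monomials in elements $(x^\pm_{\mu',-r'}\otimes t^{r'})^{(k')}$, and the degree of such a monomial in the sense of the statement is just the sum of the exponents $k'$.

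The strategy is to combine this integral PBW basis with the standard PBW filtration $\{U_{\le N}\}$ on $U((\tlie n^\pm)^\sigma)$. Because the associated graded is the symmetric algebra of $(\tlie n^\pm)^\sigma$ and hence commutative, the classical inclusion $[U_{\le N},U_{\le M}]\subseteq U_{\le N+M-1}$ holds. Since $X^{(k)}=X^k/k!$ lies in $U_{\le k}$ and, analogously, $Y^{(l)}\in U_{\le l}$, we conclude
\begin{equation*}
[X^{(k)},Y^{(l)}]\in U_{\le k+l-1}\cap U_\mathbb Z((\tlie n^\pm)^\sigma).
\end{equation*}

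The crucial compatibility is that, for an ordered divided-power monomial as above, the degree from the statement coincides with its PBW filtration degree in $U((\tlie n^\pm)^\sigma)$; this is immediate since each factor $(x)^{(k)}=x^k/k!$ occupies precisely the $k$-th filtration level. Using uniqueness of the expansion in the integral PBW basis together with the linear independence, inside the associated graded symmetric algebra, of the images of distinct ordered monomials of the same degree, no monomial of PBW degree exceeding $d$ can appear with nonzero coefficient in the expansion of an element of $U_\mathbb Z((\tlie n^\pm)^\sigma)\cap U_{\le d}$.

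Combining these ingredients, write $X^{(k)}Y^{(l)}=Y^{(l)}X^{(k)}+[X^{(k)},Y^{(l)}]$ and expand the commutator in the integral basis of $U_\mathbb Z((\tlie n^\pm)^\sigma)$. By the preceding paragraph, only ordered monomials of degree at most $k+l-1<k+l$ can occur, which is exactly the assertion. The main subtlety I expect to require care is precisely the matching of divided-power degree with PBW filtration degree on our basis, since this is the bridge that transfers the classical commutator-reduces-filtration principle into an integral statement about divided powers; everything else is standard, as both integrality and the restriction to $x^\pm$-type monomials are automatic once one works inside $U((\tlie n^\pm)^\sigma)$.
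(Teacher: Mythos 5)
Your argument is correct, and it is genuinely self-contained where the paper is not: the paper simply cites Mitzman's Lemma 4.2.13, whose proof proceeds by explicit straightening computations with the integral commutation relations in the twisted loop algebra, whereas you replace all of that with the single conceptual fact that the associated graded of the PBW filtration on $U((\tlie n^\pm)^\sigma)$ is commutative. The hinge of your proof is the observation that an element of $U_\mathbb Z((\tlie n^\pm)^\sigma)$ that happens to lie in $U_{\le d}$ can only involve ordered divided-power monomials of degree at most $d$ when expanded in the integral basis, which you justify by passing to $U_{\le N}/U_{\le N-1}\cong S^N\bigl((\tlie n^\pm)^\sigma\bigr)$ over $\mathbb Q$, where distinct ordered monomials of degree $N$ have linearly independent images; that is exactly right and is the step you correctly flagged as the one needing care. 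What your approach buys is brevity and independence from Mitzman's apparatus; what it loses is the finer structural information that the explicit straightening proof carries (for instance, which lower-degree monomials actually occur, with what coefficients, and how the argument behaves purely over $\mathbb Z$ without a detour through $\mathbb Q$), information that Mitzman's version supplies and that is sometimes useful elsewhere in this circle of ideas. One cosmetic note: your last sentence says the restriction to $x^\pm$-type monomials is automatic because you work inside $U((\tlie n^\pm)^\sigma)$, and indeed the key point is that $[X^{(k)},Y^{(l)}]$ lies in $U(\lie a)\cap U_\mathbb Z(\tlie g^\sigma)=U_\mathbb Z(\lie a)$ for $\lie a=(\tlie n^\pm)^\sigma$, which by Theorem \ref{forms} is spanned exactly by the ordered $x^\pm$-monomials; it would be worth making that intersection identity explicit since it is what licenses expanding the commutator in that basis in the first place.
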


The next identity is trivially established.
\begin{equation}\label{form1}
(x^\pm_{\mu,- r} \otimes t^r)^{(k)}(x^\pm_{\mu,- r} \otimes t^r)^{(l)} = \tbinom{k+l}{k}(x^\pm_{\mu,-r} \otimes t^r)^{(k+l)} \qquad\text{for all}\qquad r\in\mathbb Z, x^\pm_{\mu,- r}\in \cal C^\sigma(O).
\end{equation}
Notice that, if $\mathbb F$ has characteristic $p>0$, \eqref{form1} implies that $((x^\pm_{\mu,- r} \otimes t^r)^{(k)})^p=0$ in $U_\mathbb F(\tlie g^\sigma)$.

In the case of $\lie g$ is of type $A_{2n}$, we will also need the following identity on Heisenberg algebras whose checking is straightforward.
Let $\lie H$ be the $3$-dimensional Heisenberg algebra $\lie H$ generated by elements $x, y, z$ such that $[x,y]=z$ and $z$ being central. Then, the following identity holds in $U(\lie H)$:
\begin{equation}\label{e:heis}
(x+y)^{(n)}=\tsum_{\substack{0\le k \le n \\ n\equiv_2 k}}^{} (-z/2)^{(\frac{n-k}{2})}  \tsum_{r=0}^{k}  x^{(r)}y^{(k-r)}.
\end{equation}

\subsection{Discrete valuation rings and evaluation maps} \label{s:evmaps}

Following \cite{hyperlar}, given an algebraically closed field $\mathbb F$, let $\mathbb A$ be a Henselian discrete valuation ring of characteristic zero having $\mathbb F$ as its residue field (cf. \cite[Theorem 12.4.1, Example 18.3.4(3)]{Ido}). Set $U_\mathbb A(\lie a)=\mathbb A\otimes_\mathbb Z U_\mathbb Z(\lie a)$ whenever $U_\mathbb Z(\lie a)$ has been defined. Clearly
\begin{equation*}
U_\mathbb F(\lie a)\cong \mathbb F\otimes_\mathbb A U_\mathbb A(\lie a).
\end{equation*}

\begin{lem} \label{rootsinA} Let $\mathbb A$ and $\mathbb F$ be as above.
\begin{enumerate}[(a)]
\item If either $m\ne 3$ or the characteristic of $\mathbb F$ is not $3$, then $\zeta\in\mathbb A$.
\item If $m=3$ and the characteristic of $\mathbb F$ is $3$, then $\mathbb A[\zeta]$ is also a discrete valuation ring with the same residue field $\mathbb F$.
\item If the characteristic of $\mathbb F$ is not $2$, then $\sqrt2\in\mathbb A$.
\end{enumerate}
\end{lem}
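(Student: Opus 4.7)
The proof rests on the defining property of a Henselian discrete valuation ring: simple roots of polynomials over the residue field lift uniquely to roots in $\mathbb A$ (Hensel's lemma). Throughout, write $\mathfrak m$ for the maximal ideal of $\mathbb A$ and $K=\mathrm{Frac}(\mathbb A)$.

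For (a), the cases $m=1,2$ are trivial since $\zeta\in\{\pm 1\}\subseteq \mathbb A$. For $m=3$ with $\mathrm{char}\,\mathbb F\ne 3$, I would apply Hensel's lemma to $f(x)=x^2+x+1\in \mathbb A[x]$: its discriminant $-3$ is a unit in $\mathbb F$, so $\bar f\in\mathbb F[x]$ has two distinct simple roots in the algebraically closed field $\mathbb F$, either of which lifts to a (necessarily primitive) cube root of unity $\zeta\in\mathbb A$. For (c), I would apply Hensel's lemma to $g(x)=x^2-2$: since $\mathrm{char}\,\mathbb F\ne 2$, the derivative $\bar g'(x)=2x$ does not vanish at either root of $\bar g$ in $\mathbb F$, so both roots are simple and lift to $\mathbb A$.

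Part (b) is the subtle case, because $\bar f(x)=(x-1)^2$ in characteristic $3$ and Hensel's lemma does not apply directly. If $\zeta\in\mathbb A$ already, then $\mathbb A[\zeta]=\mathbb A$ and there is nothing to prove. Otherwise $f$ is irreducible over $K$ and $L:=K(\zeta)$ is a separable degree-$2$ extension; the integral closure $B$ of $\mathbb A$ in $L$ is a DVR, since for any Henselian DVR the integral closure in a finite extension of its fraction field is again a local Dedekind domain. The residue field of $B$ is generated over $\mathbb F$ by the reduction of $\zeta$; but $\bar\zeta^3=1$ in characteristic $3$ forces $\bar\zeta=1$, so the residue field remains $\mathbb F$.

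To identify $\mathbb A[\zeta]$ with $B$, I would substitute $u=\zeta-1$, obtaining $u^2+3u+3=0$; since $3\in\mathfrak m$ and the standard construction of $\mathbb A$ cited in the paper yields $v_\mathbb A(3)=1$, this is Eisenstein, whence $u$ is a uniformizer of $B$ and $\mathbb A[\zeta]=\mathbb A[u]=B$. The main obstacle is precisely this final identification: without the normalization $v_\mathbb A(3)=1$ one cannot conclude $\mathbb A[\zeta]=B$ by Eisenstein alone, which is why the argument must lean on the particular Henselian DVR constructed in the cited references rather than being carried out for an arbitrary such $\mathbb A$.
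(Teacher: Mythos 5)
Your argument for parts (a) and (c) is essentially the paper's: reduce the quadratic mod $\mathfrak m$, observe it splits into distinct linear factors over the algebraically closed residue field (unit discriminant), and lift a root by Hensel. For part (b) you take a mild detour that the paper avoids: you pass to the integral closure $B$ of $\mathbb A$ in $K(\zeta)$, establish that $B$ is a DVR with residue field $\mathbb F$ (via $\bar\zeta^3=1\Rightarrow\bar\zeta=1$), and only then identify $\mathbb A[\zeta]$ with $B$ by the Eisenstein criterion. The paper argues more directly, writing $\mathbb A[\zeta]\cong\mathbb A[x]/(\Phi_3)\cong\mathbb A[x]/(x^2+3x+3)$ and invoking Serre, \emph{Local Fields}, Ch.\ I, Prop.\ 17, which already packages the conclusion ``Eisenstein $\Rightarrow$ $\mathbb A[\pi]$ is a DVR with the same residue field.'' Your route is correct but carries the extra integral-closure step that the cited proposition makes unnecessary.

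Where your version adds something genuine is the remark about normalization. The paper justifies ``$x^2+3x+3$ is Eisenstein'' only by noting $3\in\mathfrak m$, which is half the criterion; the other half, $3\notin\mathfrak m^2$, i.e.\ $v_\mathbb A(3)=1$, is left implicit. You are right that this does not hold for an arbitrary Henselian DVR of characteristic zero with residue field of characteristic $3$ (e.g.\ after a ramified base change $x^2+3x+3$ ceases to be Eisenstein, and $\mathbb A[\zeta]$ can fail to be integrally closed), and that the argument therefore leans on the particular absolutely unramified $\mathbb A$ produced by the cited construction (a Cohen/Witt ring has $v(3)=1$). This is a correct and worthwhile observation that sharpens a step the paper states tersely.
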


\proof If $m\ne 3$ then $\zeta=-1$ and part (a) follows immediately. If the characteristic of $\mathbb F$ is not $3$ and $m=3$, then the $3$-cyclotomic polynomial $\Phi_3(x)=x^2+x+1 \in \mathbb A[x]$ has a root in $\mathbb A$ since its reduction to $\mathbb F$ splits ($\mathbb F$ is algebraically closed) and the conclusion of part (a) follows from Hensel's property of $\mathbb A$ (cf. \cite[Theorem 18.1.2]{Ido}). Similarly, if the characteristic of $\mathbb F$ is not $2$, then $\phi(x)=x^2-2 \in \mathbb A[x]$ splits in $\mathbb F$ and we conclude that $\sqrt2\in\mathbb A$ by using Hensel's property once more. Finally, for proving (b), note that $\phi(x):=\Phi_3(x+1)=x^2+3x+3$ is an Eisenstein polynomial relative to the maximal ideal of $\mathbb A$ (since $3$ belongs to this ideal). Hence, by considering the isomorphism $\rho: \mathbb A[x] \to \mathbb A[x]$ defined by $x\mapsto x+1$ we have $\mathbb A[\zeta]\cong \frac{\mathbb A[x]}{(\Phi_3(x))}\cong \frac{\mathbb A[x]}{(\phi(x))}$ and the claim follows from \cite[Ch. 1, Prop. 17]{serre}.
 \endproof

By the statements of parts (a) and (b) of the Lemma \ref{rootsinA}, we can and do assume that $\zeta\in\mathbb A$.

\begin{rem}
By working with $\mathbb A$ instead of $\mathbb Z$ (and assuming that $\zeta\in\mathbb A$) we make viable several constructions such as the evaluation maps which will be presented next. Also, the extra care in the choice of $O$ for the case $m=3$ which we made in the Subsection \ref{s:chevalley} becomes unnecessary, i.e., Theorem \ref{forms} remains valid replacing $\mathbb Z$ by $\mathbb A$ without the special choice of $O$.
\end{rem}

\begin{prop}
$U_\mathbb A(\tlie g^\sigma)$ is an $\mathbb A$-subalgebra of $U_\mathbb A(\tlie g)$ and the inclusion $U_\mathbb A(\tlie g^\sigma)\hookrightarrow U_\mathbb A(\tlie g)$ induces an inclusion of $\mathbb F$-algebras $U_\mathbb F(\tlie g^\sigma)\hookrightarrow U_\mathbb F(\tlie g)$.
\end{prop}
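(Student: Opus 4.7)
The proof breaks into two parts: first, the containment $U_\mathbb A(\tlie g^\sigma)\subseteq U_\mathbb A(\tlie g)$ as $\mathbb A$-subalgebras of $U(\tlie g)$, and second, that this embedding remains injective after tensoring with $\mathbb F$ over $\mathbb A$. For the first part I would check that each generator appearing in the $\mathbb A$-basis $\tilde{\cal M}^\sigma$ of $U_\mathbb A(\tlie g^\sigma)$ -- namely the divided powers $(x^\pm_{\mu,-r}\otimes t^r)^{(k)}$, the binomials $\tbinom{h_{i,0}}{k}$, and the coefficients $\Lambda^\sigma_{i,r}$ of the $\Lambda$-series -- already lies in $U_\mathbb A(\tlie g)$. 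Because $\zeta$ (and $\sqrt 2$ when needed) lie in $\mathbb A$ by Lemma~\ref{rootsinA}, each $x^\pm_{\mu,\epsilon}\otimes t^r$ is an $\mathbb A$-linear combination of Chevalley basis elements of $\tlie g$ to begin with.

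For the divided powers outside type $A_{2n}$, the summands in $x^\pm_{\alpha,\epsilon}\otimes t^r=\sum_{j=0}^{m-1}\zeta^{j\epsilon}(x^\pm_{\sigma^j(\alpha)}\otimes t^r)$ pairwise commute in $U(\tlie g)$, since $\sigma^i(\alpha)+\sigma^j(\alpha)\notin R$ for $i\ne j$ whenever $\alpha\ne\sigma(\alpha)$; formula \eqref{e:divpsum} then expresses the divided power as an $\mathbb A$-combination of products of ordinary divided powers, all lying in Garland's integral form. In type $A_{2n}$ with $\alpha|_{\lie h_0}\in R_s$ and $\sigma(\alpha)\ne\alpha$, relation \eqref{e:x2Rs} together with the fact that $2\alpha+\sigma(\alpha)$ and $\alpha+2\sigma(\alpha)$ are not roots produces a $3$-dimensional Heisenberg configuration $\{x^\pm_\alpha\otimes t^r,\, x^\pm_{\sigma(\alpha)}\otimes t^r,\, x^\pm_{\alpha+\sigma(\alpha)}\otimes t^{2r}\}$. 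Applying the Heisenberg identity \eqref{e:heis} to $x=x^\pm_\alpha\otimes t^r$ and $y=\pm x^\pm_{\sigma(\alpha)}\otimes t^r$ expresses $(x^\pm_{\alpha,\epsilon}\otimes t^r)^{(k)}$ as an $\mathbb A$-combination of products of divided powers of Chevalley basis elements of $\tlie g$; the factor $1/2$ appearing in \eqref{e:heis} lies in $\mathbb A$ because $\mathrm{char}\,\mathbb F\ne 2$ in this type. The series coefficients $\Lambda^\sigma_{\mu,\pm r}$ are then handled via \eqref{e:tLvsL}, which rewrites them as $\mathbb A$-combinations of coefficients of shifted products $\prod_j\Lambda^\pm_{\sigma^j(\alpha)}(\zeta^{m-j}u)$ or of $\Lambda^\pm_{\alpha;m}(u)$, all lying in $U_\mathbb A(\tlie h)\subseteq U_\mathbb A(\tlie g)$. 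Finally, each $\tbinom{h_{i,0}}{k}$ is reduced by iterated application of the Vandermonde-type identity \eqref{e:expbin} to binomials $\tbinom{h_\alpha}{k'}$ in Kostant's form.

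For the injectivity after base change, both $U_\mathbb A(\tlie g^\sigma)$ and $U_\mathbb A(\tlie g)$ are free $\mathbb A$-modules with ordered PBW bases (Theorem~\ref{forms}) which reduce modulo the maximal ideal $\mathfrak m$ of $\mathbb A$ to $\mathbb F$-bases of the corresponding hyperalgebras. Hence it suffices to show that the images in $U_\mathbb A(\tlie g)$ of the PBW basis monomials of $U_\mathbb A(\tlie g^\sigma)$ remain $\mathbb F$-linearly independent after reduction mod $\mathfrak m$. I would fix a PBW order on the Chevalley basis of $\tlie g$ compatible with a fixed orbit-wise refinement of the basis of $\tlie g^\sigma$, and, using Lemma~\ref{sbrw} together with the Heisenberg/commutation analysis of the previous paragraph, assign to each basis monomial $M$ of $U_\mathbb A(\tlie g^\sigma)$ a distinguished leading PBW monomial $N(M)$ of $U_\mathbb A(\tlie g)$ appearing in the expansion of $M$ with coefficient a unit in $\mathbb A$, all other monomials being strictly smaller in the chosen order. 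Since $M\mapsto N(M)$ is injective, the resulting transition matrix is triangular with unit diagonal and therefore remains invertible modulo $\mathfrak m$, yielding the desired $\mathbb F$-linear independence.

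The main technical obstacle is again the type $A_{2n}^{(2)}$ case, where denominators equal to $1/2$ appear both in the definition of $x^\pm_{\mu,\epsilon}$ for $\mu\in R_s$ and in the Heisenberg identity \eqref{e:heis}. Controlling these denominators so that the leading coefficient of each basis monomial expansion actually lands in $\mathbb A^\times$ rather than in $\mathfrak m$ is precisely what forces the hypothesis $\mathrm{char}\,\mathbb F\ne 2$ and the specific normalization $s=1$ in \eqref{e:x2Rs} guaranteed by the choice of $O$ in Subsection~\ref{s:chevalley}.
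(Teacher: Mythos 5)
Your proposal follows essentially the same route as the paper's proof: you use \eqref{e:divpsum} for the pairwise-commuting case, \eqref{e:heis} for the Heisenberg configuration in type $A_{2n}$, \eqref{e:expbin} for the binomials, and \eqref{e:tLvsL} for the $\Lambda^\sigma$'s, and then argue injectivity mod the maximal ideal via a leading-term/unit-coefficient triangularity argument relying on Lemma~\ref{sbrw} -- all of which matches the paper's argument, with the only cosmetic difference that you verify membership in $U_\mathbb A(\tlie g)$ for every element of $\tilde{\cal M}^\sigma$ in part one (the paper only checks the generating divided powers there, deferring the binomials and $\Lambda^\sigma$'s to the injectivity step).
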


\proof
Given $k\in\mathbb Z$ and $\mu\in\wt(\lie g_k)$, let $\alpha\in O$ be such that $\alpha|_{\lie h_0}=\mu$ so that $x_{\mu,k}^\pm \otimes t^{-k} = \sum_{j=0}^{\Gamma_\alpha} \zeta^{jk}x^\pm_{\sigma^j(\alpha)}\otimes t^{-k}$.
Assume first that $\alpha+\sigma(\alpha)\notin R$. Then, since $\{ x_{\sigma^j(\alpha)}^\pm : j=0,\dots,m-1\}$ is a commuting set of elements of $\lie g$,
it follows from \eqref{e:divpsum} (using that $\zeta\in\mathbb A$) that  $\left(x^\pm_{\mu,k}\otimes t^{-k}\right)^{(n)}\in U_\mathbb A(\tlie g)$.

If $\alpha+\sigma(\alpha)\in R$, then $\lie g$ is of type $A_{2n}$ and, by our assumptions, $\mathbb F$ has characteristic different than 2. In particular, $2,\sqrt 2\in\mathbb A^\times$. Moreover, $\alpha|_{\lie h_0}\in R_s$ and $x_{\mu,k}^\pm\otimes t^{-k} = \sqrt2\left(x_{\alpha}^\pm\otimes t^{-k}+(-1)^kx_{\sigma(\alpha)}^\pm\otimes t^{-k}\right)\in U_\mathbb A(\tlie g)$. Furthermore, the subalgebra of $\lie n^\pm$ generated by $x_{\alpha}^\pm$ and $x_{\sigma(\alpha)}^\pm$ is a Heisenberg subalgebra with central element $x_{\alpha+\sigma(\alpha)}^\pm$. Then, it follows from \eqref{e:heis} that
$\left(x^\pm_{\mu,k}\otimes t^{-k}\right)^{(n)}$ is in the $\mathbb A$-span of elements of the form
$$(x^\pm_{\alpha}\otimes t^{-k})^{(n_1)}(x^\pm_{\sigma(\alpha)}\otimes t^{-k})^{(n_2)}(x^\pm_{\alpha+\sigma(\alpha)}\otimes t^{-2k})^{(n_3)}  \quad\text{with}\quad n_1+n_2+2n_3 = n.$$
with coefficients in $\mathbb A^\times$. In particular, $\left(x^\pm_{\mu,k}\otimes t^{-k}\right)^{(n)}\in U_\mathbb A(\tlie g)$. This completes the proof of the first statement.

For proving the second statement, it suffices to prove that every $\mathbb A$-basis element of $U_\mathbb A(\tlie g^\sigma)$ is written as an $\mathbb A$-linear combination of basis elements of $U_\mathbb A(\tlie g)$ and at least one of the coordinates lie in $\mathbb A^\times$. We already observed above that this is true for the elements $\left(x^\pm_{\mu,k}\otimes t^{-k}\right)^{(n)}$. From here it is easy to deduce that the same property holds for the basis elements of $U_\mathbb A((\tlie n^\pm)^\sigma)$ of the form $\prod_j (x^\pm_{\mu_j,k_j}\otimes t^{-k_j})^{(n_j)}$ with $\mu_j \in \wt(\lie g_{k_j})\cap Q_0^+$ and $\mu_j \ne \mu_i$ for all $i\ne j$. In fact, as in the previous paragraph, each factor of this product can be written as a linear combination of some elements in $U_\mathbb A(\tlie g)$ with coefficients in $\mathbb A^\times$. By taking in each of these linear combinations a term with the highest exponent (for instance, in a sum
$$\sum_i a_i(x^\pm_{\beta_i}\otimes t^{-k_i})^{(n_i)}(x^\pm_{\sigma(\beta_i)}\otimes t^{-k_i})^{(n-n_i)} \quad\text{where }\quad a_i\in \mathbb A^\times \quad \text{ and } \quad 0\le n_i \le n,$$
take $(x^\pm_{\beta_i}\otimes t^{-k_i})^{(n_i)}$ with $n_i=n$, or $(x^\pm_{\sigma(\beta_i)}\otimes t^{-k_i})^{(n_i)}$ with $n_i=n$), we conclude that the product of these collected terms has coefficient in $\mathbb A^\times$ and it appears only once in the expansion of the product $\prod_j (x^\pm_{\mu_j,k_j}\otimes t^{-k_j})^{(n_j)}$. If this term is in the PBW-order of the Chevalley basis of $\tlie g$, we are done. Otherwise, we use Lemma \ref{sbrw} to produce the term in the correct order and notice that this term has the same coefficient as before.

It remains to consider the basis elements of $U_\mathbb A(\tlie h^\sigma)$. For the elements of the form $\tbinom{h_{o(i)}}{k}$ for $i\in I_0$ and $k\in \mathbb Z_+$, recall that
$$h_{i,\epsilon} = \tsum_{j=0}^{\Gamma_{\alpha_{o(i)}}-1} \zeta^{j\epsilon}h_{\sigma^j(o(i))}.$$
Using \eqref{e:expbin} we see that $\tbinom{h_{i,0}}{k}$ is in the span of elements of the form
\begin{equation*}
\tprod_{j=1}^{\Gamma_{\alpha_{o(i)}}} \tbinom{h_{\sigma^j(o(i))}}{k_j} \qquad\text{with}\qquad \tsum_{j=1}^{\Gamma_{\alpha_{o(i)}}} k_j = k.
\end{equation*}
Furthermore, the coefficient of the term $\tbinom{h_{o(i)}}{k}$ is $1$. Next, notice that, if $o(i)$ is not fixed by $\sigma$, then
\begin{equation*}
\Lambda_{\alpha_i}^{\sigma,\pm}(u)= \tprod_{j=0}^{m-1} \Lambda_{\sigma^j(\alpha_{o(i)})}^\pm(\zeta^{m-j}u).
\end{equation*}
Given $r>0$, it follows that $\Lambda_{\alpha_i,\pm r}^\sigma$ is in the $\mathbb A$-span of elements of the form
\begin{equation*}
\tprod_{j=1}^m \Lambda_{\sigma^j(\alpha_{o(i)}),\pm r_j} \qquad\text{with}\qquad \tsum_{j=1}^m r_j = r.
\end{equation*}
Moreover, the coefficient of the term $\Lambda_{\alpha_{o(i)},\pm r}$ is $\zeta^{\pm mr}\in\mathbb A^\times$. Finally, if $o(i)$ is fixed by $\sigma$, then
\begin{equation*}
\Lambda_{\alpha_i}^{\sigma,\pm}(u)= \Lambda_{\alpha_{o(i)};m}^\pm(u).
\end{equation*}
In particular, $\Lambda_{i,r}^\sigma = \Lambda_{o(i),r;m}$ is an element of an $\mathbb A$-basis of $U_\mathbb A(\tlie h)$ by Remark \ref{r:Lambda3i}. The case of products of such elements is easily deduced from here.
\endproof

Recall from \cite[Proposition 3.3]{hyperlar} that there exists a natural surjective map of $\mathbb A$-algebras $\ev:U_\mathbb A(\tlie g)\to U_\mathbb A(\lie g)\otimes_\mathbb A\mathbb A[t,t^{-1}]$ induced by the identity map $\tlie g\to\tlie g$. Denote by $\ev^\sigma$ the composition of $\ev$ with the inclusion given by the above proposition. This induces an $\mathbb F$-algebra map
\begin{equation*}
\ev^\sigma:U_\mathbb F(\tlie g^\sigma)\to U_\mathbb F(\lie g)\otimes \mathbb F[t,t^{-1}].
\end{equation*}
In particular, given $a\in\mathbb F^\times$, we have an $\mathbb F$-algebra map
\begin{equation*}
\ev^\sigma_a: U_\mathbb F(\tlie g^\sigma)\to U_\mathbb F(\lie g)
\end{equation*}
given by the composition of $\ev^\sigma$ with the evaluation map $U_\mathbb F(\lie g)\otimes \mathbb F[t,t^{-1}]\to U_\mathbb F(\lie g)$ defined by $x\otimes f(t)\mapsto f(a)x$. Similarly, we have a map $\ev_a: U_\mathbb F(\tlie g)\to U_\mathbb F(\lie g)$. We shall refer to the map $\ev$ (respectively, $\ev^\sigma$) as the (twisted) formal evaluation map and to the map $\ev_a$ (respectively, $\ev^\sigma_a$) as the (twisted) evaluation map at $a$.
Notice that
\begin{equation}\label{e:ev(gen)}
\ev_a((x^\pm_{\alpha} \otimes t^r)^{(k)}) = a^{rk}(x_{\alpha}^\pm)^{(k)} \qquad\text{and}\qquad \ev_a( \Lambda_{\alpha,r})=(-a)^{r} \tbinom{h_{\alpha}}{|r|}.
\end{equation}
In particular,
\begin{equation*}
\ev_a^\sigma((x^\pm_{\mu,- r} \otimes t^r)^{(k)}) = a^{rk}(x_{\mu,-r}^\pm)^{(k)} \quad \text{ for all } \quad r \in \mathbb Z.
\end{equation*}
Moreover, if $\mu\in R_0^+$ and $\alpha\in O$ are such that $\alpha|_{\lie h_0}=\mu$, then \eqref{e:tLvsL} implies
\begin{equation}\label{e:evsigma}
\ev_a^\sigma( \Lambda_{\mu,\pm r}^{\sigma})=
\begin{cases} (-a)^{\pm r} \tsum_{\substack{ r_0,\cdots, r_{m-1} \in \mathbb Z_+ \\ r_0+\cdots+r_{m-1}= r}}^{} \tprod_{j=0}^{m-1} \zeta^{-jr_j}\tbinom{h_{\sigma^j(\alpha)}}{r_j},  & \text{ if }  \Gamma_\alpha =m,\\
(-a^m)^{\pm r} \tbinom{h_{\alpha}}{r}, & \text{ if }  \Gamma_\alpha = 1 \end{cases}
\end{equation}
for all $ r \in \mathbb Z_+$.

\section{Finite-dimensional representations of $U_\mathbb F(\lie g)$ and $U_\mathbb F(\tlie g)$}\label{s:gandtg}

In this section we review some results on finite-dimensional representations of $U_\mathbb F(\lie g)$ and of $U_\mathbb F(\tlie g)$ which will be relevant for our purposes.

\subsection{$U_\mathbb F(\lie g)$-modules}

All results stated here can be found in \cite{H} in the characteristic zero setting. The literature for the positive characteristic setting is more often found in the context of algebraic groups as in \cite{janb} and a more detailed review in the present context can be found in \cite[Section 2]{hyperlar}. Evidently, similar results apply for $U_\mathbb F(\lie g_0)$-modules.
Let $\le$ denote the usual partial order $\lie h^*$ and let $\mathcal W$ be the  Weyl group of $\lie g$. The longest element of $\cal W$ is denoted by $w_0$.

Let $V$ be a $U_\mathbb F(\lie g)$--module. A nonzero vector $v\in V$ is called a weight vector if there exists $\mu\in U_\mathbb F(\lie h)^*$ such that $hv=\mu(h)v$ for all $h\in U_\mathbb F(\lie h)$. The subspace spanned by weight vectors of weight $\mu$ (the weight space of weight $\mu$) will be denoted by $V_\mu$. If $V=\opl_{\mu\in U_\mathbb F(\lie h)^*}^{} V_\mu$, then $V$ is said to be a weight module. If $V_\mu\ne 0$, $\mu$ is said to be a weight of $V$ and we let
$$\wt(V) = \{\mu\in U_\mathbb F(\lie h)^*:V_\mu\ne 0\}.$$
Notice that we have an inclusion $P\hookrightarrow U_\mathbb F(\lie h)^*$ determined by
\begin{equation*}
\mu\left(\tbinom{h_i}{k}\right) = \tbinom{\mu(h_i)}{k} \quad\text{and}\quad \mu(xy)=\mu(x)\mu(y) \quad\text{for all}\quad  i\in I,k\ge 0, x,y\in U_\mathbb F(\lie h).
\end{equation*}
In particular, we can consider the partial order $\le$ on $U_\mathbb F(\lie h)^*$ given by $\mu\le\lambda$ if $\lambda-\mu\in Q^+$ and we have
\begin{equation*}
(x_\alpha^\pm)^{(k)} V_\mu \subseteq V_{\mu\pm k\alpha}\quad\text{for all}\quad \alpha\in R^+, k>0,\mu\in U_\mathbb F(\lie h)^*.
\end{equation*}
If $V$ is a weight-module with finite-dimensional weight spaces, its character is the function $\ch(V):U_\mathbb F(\lie h)^*\to \mathbb Z$ given by $\ch(V)(\mu)=\dim V_\mu$. As usual, if $V$ is finite-dimensional, $\ch(V)$ can be regarded as an element of the group ring $\mathbb Z[U_\mathbb F(\lie h)^*]$ where we denote the element of $\mathbb Z[U_\mathbb F(\lie h)^*]$ corresponding to $\mu\in U_\mathbb F(\lie h)^*$ by $e^\mu$. Notice that the group ring $\mathbb Z[P]$ can be naturally regarded as a subring of $\mathbb Z[U_\mathbb F(\lie h)^*]$ and, moreover, the action of $\cal W$ on $P$ extends naturally to an action of $\cal W$ on $\mathbb Z[P]$ by ring homomorphisms.

If  $v$ is a weight vector such that $(x_\alpha^+)^{(k)}v=0$ for all $\alpha\in R^+, k>0$, then $v$ is said to be a highest-weight vector and $V$ is said to be a highest-weight module if it is generated  by a highest-weight vector. Similarly, one defines the notions of lowest-weight vectors and modules by replacing $(x_\alpha^+)^{(k)}$ by $(x_\alpha^-)^{(k)}$.

\begin{thm}\label{t:rh}Let $V$ be a $U_\mathbb F(\lie g)$-module.
\begin{enumerate}[(a)]
\item If $V$ is finite-dimensional, then $V$ is a weight-module. Moreover, $V_\mu\ne 0$ only if $\mu\in P$, and $\dim V_\mu = \dim V_{w\mu}$ for all $w\in\cal W$. In particular, $\ch(V)\in\mathbb Z[P]^\cal W$.
\item If $V$ is a highest-weight module of highest weight $\lambda$, then $\dim(V_{\lambda})=1$ and $V_{\mu}\ne 0$ only if $\mu\le \lambda$. Moreover, $V$  has a unique maximal proper submodule and, hence, also a unique irreducible quotient. In particular, $V$ is indecomposable.
\item For each $\lambda\in P^+$, the $U_\mathbb F(\lie g)$-module $W_\mathbb F(\lambda)$ generated by a vector $v$ satisfying the defining relations
$$(x_\alpha^+)^{(k)}v=0, \quad hv=\lambda(h)v \quad \text{and} \quad (x_i^-)^{(l)}v=0,$$
for all $\alpha\in R^+, h\in U_\mathbb F(\lie h), i\in I, k>0$ and $l>\lambda(h_i)$, is nonzero and finite-dimensional. Moreover, every finite-dimensional highest-weight module of highest weight $\lambda$ is a quotient of $W_\mathbb F(\lambda)$.
\item If $V$ is finite-dimensional and irreducible, then there exists a unique $\lambda\in P^+$ such that $V$ is isomorphic to the irreducible quotient $V_\mathbb F(\lambda)$ of $W_\mathbb F(\lambda)$.
\item The character of $W_\mathbb F(\lambda), \lambda\in P^+$, is given by the Weyl character formula. In particular, $\mu\in\wt(W_\mathbb F(\lambda))$ if, and only if, $w\mu\le\lambda$ for all $w\in\cal W$. Moreover, $W_\mathbb F(\lambda)$ is a lowest-weight module with lowest weight $w_0\lambda$.\hfill\qedsymbol
\end{enumerate}
\end{thm}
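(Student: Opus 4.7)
My plan is to prove the five items in the order (a), (b), (d), (c), (e), reducing to the classical ($\mathbb{C}$) theory via the integral form from Theorem \ref{forms}. For (a), the commuting operators $\tbinom{h_i}{k}\in U_\mathbb F(\lie h)$ on a finite-dimensional $V$ admit a common eigenspace decomposition; the identity \eqref{e:expbin} forces the joint eigenvalue systems to factor through the injection $P\hookrightarrow U_\mathbb F(\lie h)^*$ (the only compatible sequences $\tbinom{n}{k}$, $k\ge 0$, are those arising from integers $n=\mu(h_i)$, with integrality forced by the eventual vanishing of $(x_i^\pm)^{(k)}$ on any fixed weight vector). The symmetry $\dim V_\mu=\dim V_{w\mu}$ is established one simple reflection at a time by a Chevalley-style lift $s_i=(x_i^+)^{(N)}(x_i^-)^{(N)}(x_i^+)^{(N)}$ with $N$ chosen suitably on each weight space, exactly as in the algebraic-group treatment of \cite{janb}. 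Part (b) is then immediate from the triangular decomposition $U_\mathbb F(\lie g)=U_\mathbb F(\lie n^-)U_\mathbb F(\lie h)U_\mathbb F(\lie n^+)$: for a highest-weight vector $v$, the module equals $U_\mathbb F(\lie n^-)v$, so $V_\lambda=\mathbb F v$ and $\wt(V)\subseteq\lambda-Q^+$, and the sum of all proper submodules lies in $\bigoplus_{\mu<\lambda}V_\mu$ and is therefore still proper.

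For (d), given a finite-dimensional simple $V$, a weight vector $v\in V_\lambda$ with $\lambda$ maximal in $\wt(V)$ is killed by every $(x_\alpha^+)^{(k)}$ (otherwise $\lambda+k\alpha$ would be a weight), so by irreducibility $V$ is a highest-weight module of highest weight $\lambda$; part (a) forces $w\lambda\in\lambda-Q^+$ for all $w\in\cal W$, hence $\lambda\in P^+$. For (c), I define $W_\mathbb F(\lambda)$ by the stated presentation and realize it as $\mathbb F\otimes_\mathbb A W_\mathbb A(\lambda)$, where $W_\mathbb A(\lambda):=U_\mathbb A(\lie g)v_\lambda\subseteq W_\mathbb C(\lambda)$ is the $\mathbb A$-span of the highest-weight orbit inside the classical finite-dimensional simple module. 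By Theorem \ref{forms} and the PBW basis of $U_\mathbb Z(\lie n^-)$, the module $W_\mathbb A(\lambda)$ is $\mathbb A$-free of rank $\dim_\mathbb C W_\mathbb C(\lambda)$ and inherits a weight-graded $\mathbb A$-basis from the complex case. Base change to $\mathbb F$ therefore yields a nonzero, finite-dimensional $U_\mathbb F(\lie g)$-module satisfying the defining relations, and universality is built into the presentation; thus any finite-dimensional highest-weight $\lambda$-module is a quotient, which combined with (d) identifies a finite-dimensional simple of highest weight $\lambda$ with the unique irreducible quotient $V_\mathbb F(\lambda)$.

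For (e), the weight-graded $\mathbb A$-basis of $W_\mathbb A(\lambda)$ gives $\dim_\mathbb F W_\mathbb F(\lambda)_\mu=\dim_\mathbb C W_\mathbb C(\lambda)_\mu$ for every $\mu$, whence the character is given by Weyl's formula and the weight set is the $\cal W$-saturation $\{\mu\in P:w\mu\le\lambda\text{ for all }w\in\cal W\}$. Since $w_0\lambda$ is then the minimal weight and occurs with multiplicity one, any nonzero vector in $W_\mathbb F(\lambda)_{w_0\lambda}$ is annihilated by every $(x_\alpha^-)^{(k)}$ (no lower weight exists) and thus is a lowest-weight vector; the analogous lowest-weight presentation, obtained by applying the argument of (c) to the opposite triangular decomposition, shows that it generates the entire module. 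The principal obstacle in this outline is the Weyl-group symmetry in (a): the classical Chevalley lift of a simple reflection must be reformulated with divided powers and shown to permute weight spaces in arbitrary characteristic, which is delicate but worked out in \cite[II.1]{janb}; once (a) is secured, the remaining items follow essentially formally from base change along the integral form.
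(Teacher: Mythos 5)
The paper does not actually give a proof of Theorem~\ref{t:rh}: it is stated with a terminal \qedsymbol and the text immediately before it simply records that the characteristic-zero case is in \cite{H}, the positive-characteristic case is handled in the algebraic-groups literature \cite{janb}, and a review adapted to this hyperalgebra setting appears in \cite[Section~2]{hyperlar}. So there is no in-paper argument to compare yours against; the comparison must be with the standard proofs in those references, which you are indeed following in outline.

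That said, your sketch has a genuine gap concentrated in items (c) and (e). You define $W_\mathbb F(\lambda)$ by the stated generators and relations and then propose to ``realize it as $\mathbb F\otimes_\mathbb A W_\mathbb A(\lambda)$'' with $W_\mathbb A(\lambda)=U_\mathbb A(\lie g)v_\lambda\subseteq W_\mathbb C(\lambda)$. What you actually establish is only that $\mathbb F\otimes_\mathbb A W_\mathbb A(\lambda)$ is a nonzero finite-dimensional module satisfying the defining relations, hence a \emph{quotient} of the abstract $W_\mathbb F(\lambda)$; this gives $W_\mathbb F(\lambda)\ne 0$, but not that $W_\mathbb F(\lambda)$ is finite-dimensional, much less that the surjection is an isomorphism. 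Without that identification you cannot read off the character in (e) from the $\mathbb A$-lattice. The missing step is the upper bound: one must show directly that the abstractly presented $W_\mathbb F(\lambda)$ is finite-dimensional, e.g.\ by using the relations $(x_i^-)^{(l)}v=0$ for $l>\lambda(h_i)$ together with the commutation identity \eqref{e:comutx+x-} and the PBW basis of $U_\mathbb F(\lie n^-)$ to propagate local nilpotence of $(x_\alpha^-)^{(k)}$ from simple to arbitrary positive roots (inductively on height), whence $\wt(W_\mathbb F(\lambda))$ is a finite, $\cal W$-stable subset of $\lambda-Q^+$ by (a). Once finite-dimensionality and the dimension count are both in hand, the surjection onto $\mathbb F\otimes_\mathbb A W_\mathbb A(\lambda)$ forces equality, and only then does the Weyl character formula in (e) follow. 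I would also flag, as a smaller point in (a), that pairwise-commuting operators on a finite-dimensional space over $\overline{\mathbb F}$ a priori give only a \emph{generalized} eigenspace decomposition; one needs the $U_\mathbb F(\lie{sl}_2)$-theory on each copy $\langle (x_i^\pm)^{(k)}\rangle$ to see that the $\binom{h_i}{k}$ act semisimply with the integral eigenvalue pattern coming from $P$, and the Weyl-group symmetry should be implemented via the full divided-power exponential $\sum_{a,b,c}(-1)^b(x_i^+)^{(a)}(x_i^-)^{(b)}(x_i^+)^{(c)}$ (finite on each weight space) rather than a single term $(x_i^+)^{(N)}(x_i^-)^{(N)}(x_i^+)^{(N)}$, exactly as in \cite[II.1]{janb}.
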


The module $W_\mathbb F(\lambda)$ defined in this theorem is called the Weyl module of highest weight $\lambda$.

\begin{thm} Suppose $\mathbb F$ has characteristic zero. Then, every finite-dimensional $U_\mathbb F(\lie g)$-module is completely reducible. In particular, $W_\mathbb F(\lambda)$ is simple for all $\lambda\in P^+$.\hfill\qedsymbol
\end{thm}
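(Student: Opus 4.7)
The plan is to reduce to the classical Weyl theorem via the isomorphism between the hyperalgebra and the usual enveloping algebra in characteristic zero, and then to deduce the statement about Weyl modules from indecomposability plus complete reducibility.

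First, I would invoke the observation already made in Section \ref{s:algs}: when $\mathbb F$ has characteristic zero, the natural algebra homomorphism $U(\lie g_\mathbb F)\to U_\mathbb F(\lie g)$ is an isomorphism, where $\lie g_\mathbb F=\mathbb F\otimes_\mathbb Z\lie g_\mathbb Z$ and $\lie g_\mathbb Z$ is the $\mathbb Z$-span of the Chevalley basis of $\lie g$. Since the structure constants of a Chevalley basis are integers encoding the same Dynkin data, $\lie g_\mathbb F$ is a finite-dimensional simple Lie algebra over $\mathbb F$ of the same type as $\lie g$, and finite-dimensional $U_\mathbb F(\lie g)$-modules correspond bijectively to finite-dimensional $\lie g_\mathbb F$-modules. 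This converts the statement into the standard assertion that finite-dimensional representations of a semisimple Lie algebra in characteristic zero are completely reducible.

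For that assertion I would appeal to the classical Weyl theorem on complete reducibility. If one wishes to sketch its proof intrinsically, the cleanest route is through the quadratic Casimir element $C$ of $\lie g_\mathbb F$: given a short exact sequence of finite-dimensional modules $0\to V'\to V\to V''\to 0$, one reduces by standard arguments to the case where $V'$ is irreducible and $V/V'$ is trivial; then $C$ acts as a scalar $c_{V'}$ on $V'$ and as $0$ on $V/V'$, and since $c_{V'}\ne 0$ whenever $V'$ is nontrivial (because $C$ acts by $(\lambda,\lambda+2\rho)$ on the irreducible of highest weight $\lambda$), the kernel of $C$ on $V$ gives a splitting; the remaining case where $V'$ is trivial is handled by an induction reducing to the well-known computation in Whitehead's first lemma. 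Algebraic closedness of $\mathbb F$ plays no role here; only characteristic zero is needed so that the Casimir and the binomial coefficients occurring in the decomposition are well-defined scalars in $\mathbb F$.

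For the second assertion, fix $\lambda\in P^+$. By Theorem \ref{t:rh}(c), $W_\mathbb F(\lambda)$ is finite-dimensional, and by Theorem \ref{t:rh}(b) it is indecomposable, since as a highest-weight module it admits a unique maximal proper submodule. Complete reducibility then forces any indecomposable finite-dimensional $U_\mathbb F(\lie g)$-module to be simple, so $W_\mathbb F(\lambda)\cong V_\mathbb F(\lambda)$.

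The only conceivable obstacle is the invocation of Weyl's theorem, but in the characteristic zero setting this is entirely standard and requires no adaptation to the hyperalgebra language once the isomorphism $U_\mathbb F(\lie g)\cong U(\lie g_\mathbb F)$ is in hand; hence no new ingredient beyond what is already available in the excerpt is needed.
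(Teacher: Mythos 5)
The paper states this theorem without proof (the \qedsymbol immediately follows the statement), treating it as the standard fact that complete reducibility over a characteristic-zero field follows from Weyl's theorem via the identification $U_\mathbb F(\lie g)\cong U(\lie g_\mathbb F)$; your argument is exactly this standard route, and it is correct. Two minor points: in the last case of your Casimir reduction, when $V'$ is the trivial one-dimensional module, the simplest justification is not Whitehead's lemma but the observation that the two-step filtration forces $\lie g=[\lie g,\lie g]$ to act by strictly upper-triangular, hence commuting, hence zero matrices, so $V$ is itself trivial; and the deduction that $W_\mathbb F(\lambda)$ is simple correctly combines Theorem \ref{t:rh}(b) (unique maximal submodule, hence indecomposable) and (c) (finite-dimensionality) with complete reducibility.
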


\subsection{$U_\mathbb F(\tlie g)$-modules}

We now recall some basic results about the category of finite-dimensional $U_\mathbb F(\tlie g)$-modules in the same spirit of the previous section. All the results of this subsection and the next can be found in \cite{hyperlar} and references therein.

We denote by $\cal P_\mathbb F^+$ the multiplicative monoid consisting of all families of the form $\gb\omega = (\gb\omega_i)_{i\in I}$ where each $\gb\omega_i$ is a polynomial in $\mathbb F[u]$ with constant term $1$. We also denote by $\cal P_\mathbb F$ the multiplicative group associated to $\cal P_\mathbb F^+$ which will be referred to as the $\ell$-weight lattice associated to $\lie g$. Given $\mu\in P$ and $a\in\mathbb F^\times$, let $\gb\omega_{\mu,a}$ be the element of $\cal P_\mathbb F$ defined as
\begin{equation*}
(\gb\omega_{\mu,a})_i(u) = (1-au)^{\mu(h_i)} \quad\text{for all}\quad i\in I.
\end{equation*}
If $\mu=\omega_i$ is a fundamental weight, we simplify notation and write $\gb\omega_{i,a}$ and refer to it as a fundamental $\ell$-weight. Notice that $\cal P_\mathbb F$ is the free abelian group on the set of fundamental $\ell$-weights.
Let $\wt:\cal P_\mathbb F \to P$ be the unique group homomorphism such that $\wt(\gb\omega_{i,a}) = \omega_i$ for all $i\in I,a\in\mathbb F^\times$. Let also $\gb\omega\mapsto \gb\omega^-$ be the unique group automorphism of $\cal P_\mathbb F$ mapping $\gb\omega_{i,a}$ to $\gb\omega_{i,a^{-1}}$ for all $i\in I,a\in\mathbb F^\times$. For notational convenience we set $\gb\omega^+=\gb\omega$.

The abelian group $\cal P_\mathbb F$ can be identified with a subgroup of the monoid of $|I|$-tuples of formal power series with coefficients in $\mathbb F$ by identifying the rational function $(1-au)^{-1}$ with the corresponding geometric formal power series. This allows us to define an inclusion $\cal P_\mathbb F \hookrightarrow (U_\mathbb F(\tlie h))^*$   determined by
\begin{gather*}
\gb\omega\left(\tbinom{h_i}{k}\right) = \tbinom{\wt(\gb\omega)(h_i)}{k}, \quad \gb\omega(\Lambda_{i,r})=\omega_{i,r}, \quad\text{for all}\quad i\in I,r,k\in\mathbb Z,k\ge 0,\\ \text{and}\quad
\gb\omega(xy)=\gb\omega(x)\gb\omega(y), \quad\text{for all}\quad x,y\in U_\mathbb F(\tlie h).
\end{gather*}
Here, $\omega_{i,\pm r}$ is the coefficient of $u^r$ in the $i$-th formal power series of $\gb\omega^\pm$.

Given a $U_\mathbb F(\tlie g)$-module $V$ and $\xi\in U_\mathbb F(\tlie h)^*$, let
\begin{equation*}
V_\xi=\{v\in V: \text{ for all } x\in U_\mathbb F(\tlie h), \text{ there exists } k>0 \text{ such that } (x-\xi(x))^kv = 0\}.
\end{equation*}
We say that $V$ is an $\ell$-weight module if
\begin{equation*}
V = \opl_{\gb\omega\in\cal P_\mathbb F}^{} V_\gb\omega.
\end{equation*}
In that case, it follows that
\begin{equation*}
V_\mu=\opl_{\substack{\gb\omega\in\cal P_\mathbb F:\\ \wt(\gb\omega)=\mu}}^{} V_\gb\omega \quad\text{for all}\quad \mu\in P \quad\text{and}\quad V=\opl_{\mu\in P}^{} V_\mu.
\end{equation*}

A nonzero element of $V_\gb\omega$ is said to be an $\ell$-weight vector of $\ell$-weight $\gb\omega$. An $\ell$-weight vector $v$ is said to be a highest-$\ell$-weight vector if $U_\mathbb F(\tlie h)v=\mathbb Fv$ and $(x_{\alpha,r}^+)^{(k)}v = 0$ for all $\alpha\in R^+$ and all $r,k\in\mathbb Z, k>0$. If $V$ is generated by a highest-$\ell$-weight vector of $\ell$-weight $\gb\omega$, $V$ is said to be a highest-$\ell$-weight module of highest $\ell$-weight $\gb\omega$.

\begin{thm} Let $V$ be a $U_\mathbb F(\tlie g)$-module.
\begin{enumerate}[(a)]
\item If $V$ is finite-dimensional, then $V$ is an $\ell$-weight module. In particular, if $V$ is finite-dimensional and irreducible, then $V$ is a highest-$\ell$-weight module whose highest $\ell$-weight lies in $\cal P_\mathbb F^+$.
\item If $V$ is a highest-$\ell$-weight module of highest $\ell$-weight $\gb\omega\in\cal P_\mathbb F^+$, then $\dim V_\gb\omega=1$ and $V_{\mu}\ne 0$ only if $\mu\le \wt(\gb\omega)$. Moreover, $V$  has a unique maximal proper submodule and, hence, also a unique irreducible quotient. In particular, $V$ is indecomposable.

\item For each $\gb\omega\in \cal P_\mathbb F^+$, the $U_\mathbb F(\tlie g)$-module $W_\mathbb F(\gb\omega)$ generated by a vector
$v$ satisfying the defining relations of being a highest-$\ell$-weight vector of $\ell$-weight $\gb\omega$ and
\begin{gather*}
(x_{\alpha}^-)^{(l)}v = 0 \quad\text{for all}\quad \alpha\in R^+, l>\wt(\gb\omega)(h_\alpha),
\end{gather*}
is nonzero and finite-dimensional. Moreover, every finite-dimensional highest-$\ell$-weight-module of highest $\ell$-weight $\gb\omega$ is a quotient of $W_\mathbb F(\gb\omega)$.
\item If $V$ is finite-dimensional and irreducible, then there exists a unique $\gb\omega\in P^+$ such that $V$ is isomorphic to the irreducible quotient $V_\mathbb F(\gb\omega)$ of $W_\mathbb F(\gb\omega)$.
\item For $\mu\in P$ and $\gb\omega\in\cal P_\mathbb F^+$, we have $\mu\in\wt(W_\mathbb F(\gb\omega))$ if and only if $\mu\in\wt(W_\mathbb F(\wt(\gb\omega)))$.\hfill\qedsymbol
\end{enumerate}
\end{thm}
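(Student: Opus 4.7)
The five parts follow the arc of Theorem \ref{t:rh}, with the commutative subalgebra $U_\mathbb F(\tlie h)$ replacing $U_\mathbb F(\lie h)$ and rational $\ell$-weights (encoded as elements of $\cal P_\mathbb F^+$) replacing integral weights. The results can be found in \cite{hyperlar}; the plan is to indicate the main ingredients in an order that makes each item an almost formal consequence of the previous ones. The real content lies in (a) and (c); the remaining items reduce to triangular-decomposition and character-comparison arguments together with the classical theory recalled in Theorem \ref{t:rh}.

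For (a), since $U_\mathbb F(\tlie h)$ is commutative, any finite-dimensional $V$ decomposes into generalized joint eigenspaces $V_\xi$ for $\xi\in U_\mathbb F(\tlie h)^*$. Pick $\xi$ with $\wt(\xi)$ maximal in $\wt(V)$ and $0\ne v\in V_\xi$. The restriction $\xi|_{U_\mathbb F(\lie h)}$ lies in $P$ by Theorem \ref{t:rh} applied to the $U_\mathbb F(\lie g)$-submodule $U_\mathbb F(\lie g)v$, and maximality forces $(x_\alpha^+\otimes t^r)^{(k)}v=0$ for all $\alpha\in R^+$, $r\in\mathbb Z$, $k>0$. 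Restricting to the loop $\mathfrak{sl}_2$ associated to a simple root $\alpha_i$ and applying Lemma \ref{basicrel}, the series $\Lambda_{\alpha_i}^\pm(u)$ act on $v$ as the two expansions (at $0$ and $\infty$) of a single rational function in $u$; finite-dimensionality of $V$ forces this rational function to be a polynomial with constant term $1$, and so $\xi=\gb\omega$ for some $\gb\omega\in\cal P_\mathbb F^+$. For (b), the equality $V=U_\mathbb F(\tlie n^-)v$ follows from the triangular decomposition $U_\mathbb F(\tlie g)=U_\mathbb F(\tlie n^-)U_\mathbb F(\tlie h)U_\mathbb F(\tlie n^+)$, and the $(-Q^+)$-grading of $U_\mathbb F(\tlie n^-)$ yields both $\dim V_{\wt(\gb\omega)}=1$ and $\wt(V)\subseteq \wt(\gb\omega)-Q^+$; uniqueness of the maximal proper submodule is then formal, since a proper submodule cannot meet the line $\mathbb F v$.

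Part (c) is the main obstacle. It is proved by first deriving from the defining relations the stronger loop relations $(x_\alpha^-\otimes t^r)^{(l)}v=0$ for all $\alpha\in R^+$, $r\in\mathbb Z$, and all $l$ sufficiently large: following \cite{G,hyperlar}, Lemma \ref{basicrel} applied to the loop-$\mathfrak{sl}_2$ triples attached to positive roots, together with the identification of the action of the $\Lambda_{\alpha,r}$ on $v$ arising from part (b), yields these relations in positive characteristic. One then uses Lemma \ref{sbrw} to reorder PBW monomials in $U_\mathbb F(\tlie n^-)$ modulo strictly lower-degree ones, so that any vector of $W_\mathbb F(\gb\omega)$ is congruent to a linear combination of monomials whose total root-grade is bounded by $\sum_{\alpha\in R^+}\wt(\gb\omega)(h_\alpha)$, establishing finite-dimensionality. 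Part (d) then combines (a), (b) and (c), with uniqueness of $\gb\omega$ coming from $\dim V_\gb\omega=1$ and the characterization of $\ell$-weights via the $\Lambda_{i,r}$. For (e), restricting $W_\mathbb F(\gb\omega)$ to $U_\mathbb F(\lie g)$ produces a finite-dimensional module generated by a highest-weight vector of weight $\wt(\gb\omega)$, so Theorem \ref{t:rh}(a),(e) bounds its character above by that of $W_\mathbb F(\wt(\gb\omega))$; the reverse inclusion is obtained by constructing a $U_\mathbb F(\lie g)$-surjection $W_\mathbb F(\wt(\gb\omega))\twoheadrightarrow W_\mathbb F(\gb\omega)$ via an evaluation module at a generic point, or alternatively by reduction modulo $p$ from the classical (characteristic zero) statement.
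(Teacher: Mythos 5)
This theorem is stated in the paper with a \verb|\qedsymbol| and no proof; the paper explicitly defers to \cite{hyperlar} and references therein, so there is no argument in the paper to compare against line-by-line. Your sketch follows the standard \cite{G,hyperlar} strategy and parts (a), (b), (d) are in line with it. Two points, however, need attention.

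In (c), bounding the total root-grade is not what makes $W_\mathbb F(\gb\omega)$ finite-dimensional: even with bounded root-grade there are infinitely many PBW monomials in $U_\mathbb F(\tlie n^-)$ because the $t$-exponents $r$ in $(x_\alpha^-\otimes t^r)^{(k)}$ range over $\mathbb Z$. The actual work (as in the twisted analogue, Theorem \ref{t:twfd} of the paper) is to use Lemma \ref{basicrel} together with the vanishing of the $\Lambda_{\alpha,r}$ for $|r|>\wt(\gb\omega)(h_\alpha)$ to push all $t$-exponents into a fixed window $0\le r<\wt(\gb\omega)(h_\alpha)$, and only then does Lemma \ref{sbrw} give a finite spanning set. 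Your sketch mentions the root-grade bound but omits the $t$-degree reduction, which is the crux of the argument.

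In (e), the proposed map $W_\mathbb F(\wt(\gb\omega))\twoheadrightarrow W_\mathbb F(\gb\omega)$ cannot exist in general since $W_\mathbb F(\gb\omega)$ typically has larger dimension; what the evaluation construction actually yields is a surjection in the other direction, $W_\mathbb F(\gb\omega_{\lambda,a})\twoheadrightarrow W_\mathbb F(\lambda)(a)$, which proves $\wt(W_\mathbb F(\lambda))\subseteq\wt(W_\mathbb F(\gb\omega_{\lambda,a}))$ for the \emph{special} $\ell$-weight $\gb\omega_{\lambda,a}$ only. For a general $\gb\omega$ with $\wt(\gb\omega)=\lambda$, this does not suffice, and one should not take for granted that $\wt(W_\mathbb F(\gb\omega))$ is saturated in positive characteristic (root strings in finite-dimensional $U_\mathbb F(\lie{sl}_2)$-modules can have gaps, e.g.\ $L_\mathbb F(p)$ has weights $\{p,-p\}$ only). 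The alternative you mention, reduction mod $p$, is the one that actually works here: the $\mathbb A$-lattice $L(\gb\varpi)$ in $W_\mathbb K(\gb\varpi)$ is free with the characteristic-zero weight support, $L_\mathbb F(\gb\varpi)$ is a quotient of $W_\mathbb F(\gb\omega)$, and this gives $\wt(W_\mathbb F(\lambda))=\wt(W_\mathbb K(\gb\varpi))=\wt(L_\mathbb F(\gb\varpi))\subseteq\wt(W_\mathbb F(\gb\omega))$. You should promote this from an aside to the main argument for the reverse inclusion in (e).
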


The module $W_\mathbb F(\gb\omega)$ defined above is called the Weyl module of highest $\ell$-weight $\gb\omega$.

\subsection{Evaluation modules}

For a $U_\mathbb F(\lie g)$-module $V$ and $a\in\mathbb F^\times$, denote by $V(a)$ the pullback of $V$ by ${\rm ev}_a$ (cf. Section \ref{s:evmaps}). The $U_\mathbb F(\lie g)$-modules constructed in this manner are referred to as evaluation modules. If $V=V_\mathbb F(\lambda)$ for some $\lambda\in P^+$, we shall denote the corresponding evaluation module by $V_\mathbb F(\lambda,a)$. It is not difficult to check using \eqref{e:ev(gen)} that, if $v$ is a weight vector of weight $\lambda$, then
\begin{equation}
\ev_a(\Lambda_i^+(u))\ v = (\gb\omega_{i,a}(u))^{\lambda(h_i)}\ v.
\end{equation}
In particular,
\begin{equation}
V_\mathbb F(\lambda,a)\cong V_\mathbb F(\gb\omega_{\lambda,a}).
\end{equation}

\begin{prop}\label{p:evmoduntw}
Let $\gb\omega = \prod_{j=1}^n \gb\omega_{\lambda_j,a_j}\in\cal P_\mathbb F^+$ with $a_i\ne a_j$ for $i\ne j$. Then, $V_\mathbb F(\gb\omega)\cong \otm_{j=1}^n V_\mathbb F (\lambda_j,a_j)$.\hfill\qedsymbol
\end{prop}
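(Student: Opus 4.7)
The plan is to identify $v := v_1 \otimes \cdots \otimes v_n$ (with $v_j \in V_\mathbb F(\lambda_j, a_j)$ a highest-weight vector) as a highest-$\ell$-weight vector of $\ell$-weight $\gb\omega$ in $W := \otm_{j=1}^n V_\mathbb F(\lambda_j, a_j)$, to use Lagrange interpolation in the $a_j$'s to show the $U_\mathbb F(\tlie g)$-action on $W$ contains the outer tensor product action of $U_\mathbb F(\lie g)^{\otimes n}$, and to conclude that $W$ is simple with highest $\ell$-weight $\gb\omega$. For the identification, each $v_j$ (via the isomorphism $V_\mathbb F(\lambda_j, a_j) \cong V_\mathbb F(\gb\omega_{\lambda_j, a_j})$ and the evaluation formulas \eqref{e:ev(gen)}) satisfies $(x_\alpha^+ \otimes t^r)^{(k)} v_j = 0$ for all $\alpha \in R^+, r \in \mathbb Z, k > 0$, and is an $\ell$-weight vector of $\ell$-weight $\gb\omega_{\lambda_j, a_j}$. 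Iterating the coproduct identities \eqref{e:comultdp} and \eqref{e:comutLambdaser} across the $n$ tensor factors then yields that $v$ is a highest-$\ell$-weight vector of $\ell$-weight $\prod_j \gb\omega_{\lambda_j, a_j} = \gb\omega$.

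For the separation step, pick by Lagrange interpolation $f_j(t) \in \mathbb F[t, t^{-1}]$ with $f_j(a_k) = \delta_{jk}$ (possible as the $a_j$ are pairwise distinct). The element $x_\alpha^\pm \otimes f_j(t) \in U_\mathbb F(\tlie g)$ acts on $w_1 \otimes \cdots \otimes w_n$ (via the coproduct of a Lie-type element) as $\sum_k f_j(a_k)(w_1 \otimes \cdots \otimes x_\alpha^\pm w_k \otimes \cdots \otimes w_n) = w_1 \otimes \cdots \otimes x_\alpha^\pm w_j \otimes \cdots \otimes w_n$, isolating the action to the $j$-th tensor factor. Since the summands $\{x_\alpha^\pm \otimes t^r : r \in \mathbb Z\}$ pairwise commute in $U_\mathbb A(\tlie g)$, identity \eqref{e:divpsum} expands $(x_\alpha^\pm \otimes f_j(t))^{(k)}$ as an $\mathbb A$-linear combination of elements of $\tilde{\cal M}$, placing it in $U_\mathbb F(\tlie g)$. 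A direct computation over the characteristic-zero ring $\mathbb A$ (where divided powers are literal $k$-th powers over $k!$) followed by reduction modulo the maximal ideal of $\mathbb A$ then shows that this element acts on $W$ by $(x_\alpha^\pm)^{(k)}$ in the $j$-th slot and the identity on the others. Varying $\alpha, k$, and $j$, the image of $U_\mathbb F(\tlie g) \to \mathrm{End}_\mathbb F(W)$ contains that of the outer tensor product action of $U_\mathbb F(\lie g)^{\otimes n}$ on $V_\mathbb F(\lambda_1) \otimes \cdots \otimes V_\mathbb F(\lambda_n)$.

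Each $V_\mathbb F(\lambda_j)$ being a finite-dimensional simple $U_\mathbb F(\lie g)$-module over the algebraically closed $\mathbb F$, this outer tensor product is simple over $U_\mathbb F(\lie g)^{\otimes n}$ by Jacobson density. Hence $W$ is simple over $U_\mathbb F(\tlie g)$, and, being a highest-$\ell$-weight module of highest $\ell$-weight $\gb\omega$, the classification of finite-dimensional simples forces $W \cong V_\mathbb F(\gb\omega)$. The main obstacle is the divided-power step of the middle paragraph: verifying that $(x_\alpha^\pm \otimes f_j(t))^{(k)}$ both lies in $U_\mathbb F(\tlie g)$ and acts on $W$ as the stated separated operator is delicate in positive characteristic, and relies on the commutativity of $\{x_\alpha^\pm \otimes t^r\}$ (to enable \eqref{e:divpsum}) together with a characteristic-zero lift through the discrete valuation ring $\mathbb A$ before reduction modulo its maximal ideal.
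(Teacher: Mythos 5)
Your proof is correct, and it is essentially the standard argument: the paper does not supply its own proof of Proposition \ref{p:evmoduntw} (it cites \cite{hyperlar}, where it is proved exactly along these lines, adapting the characteristic-zero argument of Chari--Pressley to the hyperalgebra setting). The three ingredients you use --- (i) iterated coproduct via \eqref{e:comultdp}/\eqref{e:comutLambdaser} to identify $v_1\otimes\cdots\otimes v_n$ as a highest-$\ell$-weight vector of $\ell$-weight $\prod_j\gb\omega_{\lambda_j,a_j}$, (ii) Lagrange interpolation to produce elements of $U_\mathbb F(\tlie g)$ acting as $1\otimes\cdots\otimes(x_\alpha^\pm)^{(k)}\otimes\cdots\otimes 1$ on a fixed slot, and (iii) simplicity of outer tensor products over the algebraically closed $\mathbb F$ --- are exactly the right ones, and you correctly flag that step (ii) is where positive characteristic requires care.

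One presentational remark worth fixing: you choose $f_j\in\mathbb F[t,t^{-1}]$ with $f_j(a_k)=\delta_{jk}$, but then form $(x_\alpha^\pm\otimes f_j(t))^{(k)}$ and invoke \eqref{e:divpsum} in $U_\mathbb A(\tlie g)$. Strictly speaking one should first lift the points to $\tilde a_j\in\mathbb A^\times$ (possible since $\mathbb A\to\mathbb F$ is surjective, and then $\tilde a_i-\tilde a_j\in\mathbb A^\times$ because $a_i\ne a_j$), interpolate over $\mathbb A$ to get $f_j\in\mathbb A[t]$ with $f_j(\tilde a_i)=\delta_{ij}$, build $(x_\alpha^\pm\otimes f_j(t))^{(k)}\in U_\mathbb A(\tlie g)$ via \eqref{e:divpsum}, compute $\ev_{\tilde a_i}\bigl((x_\alpha^\pm\otimes f_j(t))^{(k_i)}\bigr)=f_j(\tilde a_i)^{k_i}(x_\alpha^\pm)^{(k_i)}$ in $U_\mathbb A(\lie g)$ (by passing through $\mathbb K$), and only then reduce modulo the maximal ideal. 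You acknowledge this implicitly in your closing paragraph, so the gap is one of exposition rather than substance; making the lift explicit before the divided-power expansion would make the argument watertight.
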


\section{Finite-dimensional $U_\mathbb F(\tlie g^\sigma)$-modules}\label{s:main}

In this section we start the study of finite-dimensional $U_\mathbb F(\tlie g^\sigma)$-modules. In particular, we develop the corresponding highest-$\ell$-weight theory, define the Weyl modules in terms of generators and relations, show that they have the same universal property as their non-twisted counterparts,  and establish the classification of the irreducible modules. In particular, we recover the related results from \cite{CFS} in the case that $\mathbb F$ has characteristic zero. The proofs are all based on their non-twisted analogues found in \cite{hyperlar}, but several extra details are needed.

\subsection{Highest-$\ell$-weight modules}\label{ss:tlw}

Recall from Section \ref{s:simple} that $P_0$ is the dominant weight lattice of $\lie g_0$. Let $P^{\sigma}_0$ be the subset of $P_0$ defined as follows:
\begin{equation*}
    P^{\sigma}_0  = \begin{cases}  \mu \in P_0 \text{ such that } \mu(h_{i,0}) \in \mathbb{Z}, & \text{if } \lie g \text{ is of type } A_{2n} \text{ and } \alpha_i\in R_s\\
     P_0, & \text{otherwise.} \end{cases}
\end{equation*}
Set $P_0^{\sigma,+}=P_0^\sigma\cap P_0^+$. We shall also regard each $\mu\in P_0^{\sigma}$ as an element of $P$ by setting
\begin{equation}\label{extending}
    \mu(h_i)=\begin{cases} \mu(h_{i,0}),  & \text{ if } i\in o(I_0), \\ 0,   & \text{ otherwise.} \end{cases}
\end{equation}

We denote by $\cal P_\mathbb F^{\sigma,+}$ the multiplicative monoid consisting of all families of the form $\gb\omega = (\gb\omega_i)_{i\in I_0}$ where each $\gb\omega_i$ is a polynomial in $\mathbb F[u]$ with constant term $1$. We also denote by $\cal P_\mathbb F^{\sigma}$ the multiplicative group associated to $\cal P_\mathbb F^{\sigma,+}$ which will be referred to as the $\ell$-weight lattice associated to the pair $(\lie g,\sigma)$.  For $i\in I_0$, $a\in\mathbb F^\times$, and $\mu\in P_0^{\sigma}$, define the element $\gb \omega^\sigma_{\mu,a}$ in $\cal P_\mathbb F^{\sigma}$ whose $i$-th entry is
\begin{equation}\label{drinf}
    (\gb \omega^\sigma_{\mu,a})_i(u)=
   \begin{cases}
   (1-au)^{\mu(h_{i,0})}, & \text{if either $\lie g$ is of type $A_{2n}$, or if $\lie g$ is not of type $A_{2n}$ and $\alpha_i\in R_s$}, \\
   (1-a^mu)^{\mu(h_{i,0})}, & \text{if $\lie g$ is not of type $A_{2n}$ and $\alpha_i\in R_l$}.
    \end{cases}
\end{equation}
As in the non-twisted case, we simplify notation and write $\gb\omega_{i,a}^\sigma$ in place of $\gb\omega_{\omega_i,a}^\sigma$ and refer to it as a fundamental $\ell$-weight. Notice that $\cal P_\mathbb F^\sigma$ is the free abelian group on the set of fundamental $\ell$-weights. Indeed, the map $\mathbb F\to\mathbb F$ given by $a\mapsto a^k$ for $k=1,2,3$ is surjective since $\mathbb F$ is algebraically closed and, hence, a perfect field.

Let $\wt:\cal P_\mathbb F^{\sigma}\to P_0^{\sigma}$ be the unique group homomorphism such that
\begin{gather}
   \wt({\gb \omega^\sigma_{i,a}})=\begin{cases}
    2\omega_i, & \text{ if $\lie g$ is  of type $A_{2n}$ and $\alpha_i\in R_s$}, \\
    \omega_i, & \text{otherwise.} \end{cases}
\end{gather}
As in the non-twisted setting, let $\gb\omega\mapsto \gb\omega^-$ be the unique group automorphism of $\cal P_\mathbb F^\sigma$ mapping $\gb\omega_{i,a}^\sigma$ to $\gb\omega_{i,a^{-1}}^\sigma$ for all $i\in I,a\in\mathbb F^\times$. As before, for notational convenience, we set $\gb\omega^+=\gb\omega$. Also, the abelian group $\cal P_\mathbb F^\sigma$ can be identified with a subgroup of the monoid of $|I_0|$-tuples of formal power series with coefficients in $\mathbb F$ as before. We then define an inclusion $\cal P_\mathbb F^\sigma \hookrightarrow U_\mathbb F(\tlie h^\sigma)^*$  by setting
\begin{gather*}
\gb\omega\left(\tbinom{h_{i,0}}{k}\right) = \tbinom{\wt(\gb\omega)(h_{i,0})}{k}, \quad  \gb\omega(\Lambda_{i,r}^{\sigma,\pm}) = \omega_{i,r} \quad\text{for all}\quad i\in I_0,r,k\in\mathbb Z,k\ge 0,\\ \quad\text{and}\quad\\
\gb\omega(xy)=\gb\omega(x)\gb\omega(y) \quad\text{for all}\quad x,y\in U_\mathbb F(\tlie h^\sigma),
\end{gather*}
where $\omega_{i,\pm r}$ is the coefficient of $u^r$ in the $i$-th formal power series of $\gb\omega^\pm$.

Given a $U_\mathbb F(\tlie g^\sigma)$-module $V$ and $\xi\in U_\mathbb F(\tlie h^\sigma)^*$, let
\begin{equation*}
V_\xi=\{v\in V: \text{ for all } x\in U_\mathbb F(\tlie h^\sigma), \text{ there exists } k>0 \text{ such that } (x-\xi(x))^kv = 0\}.
\end{equation*}
We say that $V$ is an $\ell$-weight module if
\begin{equation*}
V = \opl_{\gb\omega\in\cal P_\mathbb F^\sigma}^{} V_\gb\omega.
\end{equation*}
In that case, it follows that
\begin{equation*}
V_\mu=\opl_{\substack{\gb\omega\in\cal P_\mathbb F^\sigma:\\ \wt(\gb\omega)=\mu}}^{} V_\gb\omega \quad\text{for all}\quad \mu\in P_0 \quad\text{and}\quad V=\opl_{\mu\in P_0}^{} V_\mu.
\end{equation*}
Moreover, by \eqref{comutxhtw}, we have that $$(x^\pm_{\mu,-r}\otimes t^r)^{(k)} V_\mu \subseteq V_{\mu \pm k\mu} \quad \text{for all}\quad x^\pm_{\mu,-r}\in\cal C^\sigma(O), k\in \mathbb Z_+.$$
A nonzero element of $V_\gb\omega$ is said to be an $\ell$-weight vector of $\ell$-weight $\gb\omega$. An $\ell$-weight vector $v$ is said to be a highest-$\ell$-weight vector if $U_\mathbb F(\tlie h^\sigma)v=\mathbb Fv$ and $U_\mathbb F(\tlie g^\sigma )^0v = 0$ . If $V$ is generated by a highest-$\ell$-weight vector of $\ell$-weight $\gb\omega$, $V$ is said to be a highest-$\ell$-weight module of highest $\ell$-weight $\gb\omega$.
Standard arguments show that:

\begin{prop} \label{uniqirred} Every highest-$\ell$-weight module has a unique proper maximal submodule and, hence, a unique irreducible quotient.\hfill\qedsymbol
\end{prop}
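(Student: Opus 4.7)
The plan is to adapt the classical Verma-module argument to the twisted hyperalgebra setting. Let $V$ be generated by a highest-$\ell$-weight vector $v$ of $\ell$-weight $\gb\omega\in\cal P_\mathbb F^{\sigma}$.

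First, I would apply the triangular decomposition $U_\mathbb F(\tlie g^\sigma)=U_\mathbb F((\tlie n^-)^\sigma)U_\mathbb F(\tlie h^\sigma)U_\mathbb F((\tlie n^+)^\sigma)$ established at the end of Section \ref{s:algs}, together with the defining properties of a highest-$\ell$-weight vector ($U_\mathbb F(\tlie h^\sigma)v=\mathbb F v$ and annihilation by $U_\mathbb F((\tlie n^+)^\sigma)^0$), to conclude that $V=U_\mathbb F((\tlie n^-)^\sigma)v$.

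Second, I would establish a $P_0$-weight decomposition $V=\bigoplus_{\lambda\in\wt(\gb\omega)-Q_0^+}V_\lambda$ with $V_{\wt(\gb\omega)}=\mathbb F v$. The weight-shift identity \eqref{comutxhtw} and the PBW basis provided by Theorem \ref{forms} show that each ordered monomial in the generators $(x^-_{\mu,-r}\otimes t^r)^{(k)}$ applied to $v$ carries a well-defined $-Q_0^+$-grade, and that only the empty monomial has grade zero, so the top weight space is $\mathbb F v$. To see that the sum over $\lambda$ is direct, I would realize $V$ as a quotient of the universal highest-$\ell$-weight module $M(\gb\omega)$, which is free of rank one over $U_\mathbb F((\tlie n^-)^\sigma)$ and therefore inherits a bona fide $P_0$-grading from the natural one on $U_\mathbb F((\tlie n^-)^\sigma)$. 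That any $U_\mathbb F(\tlie h^\sigma)$-stable submodule of $M(\gb\omega)$ is $P_0$-graded then follows from a separation argument: distinct $\lambda\in P_0^\sigma$ are separated by the family of scalars $\binom{\lambda(h_{i,0})}{k}$ with $i\in I_0$ and $k\ge 0$, valid in any characteristic via Lucas's theorem.

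Third, any proper submodule $W\subsetneq V$ must satisfy $W\cap V_{\wt(\gb\omega)}=0$, because otherwise $v\in W$ would force $W=U_\mathbb F(\tlie g^\sigma)v=V$. Using that every submodule inherits the $P_0$-grading (by transferring the above argument from $M(\gb\omega)$ to $V$), the sum $N$ of all proper submodules is itself $P_0$-graded and satisfies $N_{\wt(\gb\omega)}=0$; hence $v\notin N$, so $N$ is itself proper. By construction $N$ is the unique maximal proper submodule of $V$, and $V/N$ is the unique irreducible quotient.

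The main obstacle is the second step: the honest direct-sum weight decomposition requires care in positive characteristic, where $\lambda(h_{i,0})$ alone does not distinguish distinct $\lambda\in P_0$. The separation via the full family of binomial scalars is what makes the argument go through with no restriction on $\mathbb F$.
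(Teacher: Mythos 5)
Your argument is correct and is precisely the ``standard argument'' the paper alludes to by giving no explicit proof for Proposition~\ref{uniqirred}: triangular decomposition to reduce to $V=U_\mathbb F((\tlie n^-)^\sigma)v$, a $P_0$-weight grading induced by the commuting family $\tbinom{h_{i,0}}{l}$ via \eqref{comutxhtw}, separation of distinct $\lambda\in P_0$ by these binomial scalars (so the grading is honest in any characteristic and passes to every submodule), and the one-dimensionality of the top graded piece so that the sum of all proper submodules misses $v$ and is therefore proper.
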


We shall denote by $V_\mathbb F(\gb\omega)$ the irreducible quotient of a highest-$\ell$-weight module of highest $\ell$-weight $\gb\omega\in\cal P_\mathbb F^\sigma$.

The next proposition, which is the ``twisted version'' of \cite[Proposition 3.1]{hyperlar}, establishes a set of relations satisfied by all finite-dimensional highest-$\ell$-weight modules.

\begin{prop}\label{ellhwreltw}
Let $V$ be a finite-dimensional $U_\mathbb F(\tlie g^\sigma)$-module, $\lambda\in P_0^{+}$,  and $v\in V_\lambda$ such that
$U_\mathbb F((\tlie n^+)^\sigma)^0$ $v= 0$ and $\Lambda_{i,r}^\sigma v = \omega_{i,r}v$, for all $i\in I_0, r\in\mathbb Z$, and some $\omega_{i,r}\in \mathbb F$. Then:
\begin{enumerate}[(a)]
   \item $\lambda\in P_0^{\sigma,+}$;
    \item $(x^-_{\mu,0} \otimes t^{ms})^{(k)}v =0$ for all $x^-_{\mu,0}\in\cal C^\sigma(O), k>d_\mu\lambda(h_{\mu,0})$, where $d_\mu=2$ if $\lie g$ is of type $A_{2n}$ and $\mu \in R_s$ and  $d_\mu=1$ otherwise;

    \item $\Lambda_{i,\pm r}^\sigma v=0 \text{ for all } i\in I_0, r>\lambda(h_{i,0})$;

    \item $\omega_{i,\pm  \lambda(h_i)} \ne 0$.
\end{enumerate}
Moreover, for all $\lambda\in P_0^{\sigma,+}$, there exist polynomials $f_{i,r}\in\mathbb F[t_0,t_1,\cdots,t_{\lambda(h_{i,0})}], i\in I_0, r=1,\cdots, \lambda(h_{i,0})$, such that for all $V$ and $v$ as above, we have $\omega_{i,-r}v = f_{i,r}(\omega_{i,\lambda(h_{i,0})}^{-1}, \omega_{i,1},\cdots, \omega_{i,\lambda(h_{i,0})})v$.
\end{prop}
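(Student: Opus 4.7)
The plan is to follow the template of the non-twisted analogue \cite[Proposition 3.1]{hyperlar}, reducing each claim to computations inside the subalgebras isomorphic to $U_\mathbb F(\tlie{sl}_2)$ or $U_\mathbb F(\tlie{sl}_3^\tau)$ identified in Lemma \ref{isos}, and applying the expansion formulas of Lemma \ref{basicreltw}. The common mechanism is that, since $U_\mathbb F((\tlie n^+)^\sigma)^0 v=0$, the congruences of Lemma \ref{basicreltw} collapse the products $(x^+\otimes t^{\cdot})^{(l)}(x^-\otimes t^{\cdot})^{(k)}v$ into explicit scalar polynomials in the $\omega_{i,r}$'s times $v$; finite-dimensionality of $V$ will then force these scalar polynomials to have bounded degree.

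I would start with part (b). Fix $\mu\in R_0^+$ and $x^-_{\mu,0}\otimes t^{ms}\in\cal C^\sigma(O)$. In the cases of Lemma \ref{isos}(a) and of Lemma \ref{isos}(b) with $\mu\in R_l$, the relevant subalgebra is $U_\mathbb F(\tlie{sl}_2)$ (with $t$ effectively replaced by $t^m$ or a shift), and specializing $l=k$ in Lemma \ref{basicreltw}(a) or (b) produces $(x^+_{\mu,0}\otimes t^{-ms})^{(k)}(x^-_{\mu,0}\otimes t^{ms})^{(k)}v=\pm\omega_{\mu,k}v$; the untwisted argument from \cite{hyperlar} then shows $(x^-_{\mu,0}\otimes t^{ms})^{(k)}v=0$ for $k>\lambda(h_{\mu,0})$. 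In the $A_{2n}$ short-root case, Lemma \ref{basicreltw}(c) is used inside the $U_\mathbb F(\tlie{sl}_3^\tau)$-subalgebra of Lemma \ref{isos}(b); here lowering by $\mu$ proceeds through both $x^-_{\mu,\epsilon}$ and $x^-_{2\mu,1}$, and the $\tlie{sl}_2$-triple generated by $x^\pm_{2\mu,1}$ has Cartan eigenvalue $\lambda(h_{\alpha_i,0})=2\lambda(h_{i,0})$ on $v$, yielding the stated factor $d_\mu=2$.

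Part (a) is essentially a by-product. The only nontrivial point is the integrality $\lambda(h_{i,0})\in\mathbb Z$ in the $A_{2n}$ case with $\alpha_i\in R_s$; restricting $V$ to the same $U_\mathbb F(\tlie{sl}_3^\tau)$-subalgebra, the bracket $[x^+_{2\alpha_i,1},x^-_{2\alpha_i,1}]=h_{\alpha_i,0}=2h_{i,0}$ from Lemma \ref{[]tw}(b), combined with finite-dimensionality, forces $\lambda(h_{i,0})\in\mathbb Z_+$ by the standard $\tlie{sl}_2$-argument applied to the $h_{\alpha_i,0}/2$-eigenvalue. Parts (c) and (d) are then read off from the $l=k$ specialization: because $(x^-\otimes t^{\cdot})^{(r)}v=0$ for $r>\lambda(h_{i,0})$ by (b), we get $\omega_{i,\pm r}v=\pm(x^+\otimes t^{-\cdot})^{(r)}(x^-\otimes t^{\cdot})^{(r)}v=0$ for $r>\lambda(h_{i,0})$, which is (c); at the threshold $r=\lambda(h_{i,0})$ the vector $(x^-\otimes t^{\cdot})^{(r)}v$ is nonzero by the usual dimension count for finite-dimensional $\tlie{sl}_2$-modules, forcing $\omega_{i,\pm\lambda(h_i)}\ne 0$, which is (d). The universal polynomials $f_{i,r}$ arise by comparing the two series $\Lambda_i^{\sigma,\pm}(u)v$: both act as scalar polynomials of degree $\lambda(h_{i,0})$ that are related by $u\mapsto u^{-1}$ up to the factor $u^{\lambda(h_{i,0})}$, so the coefficients $\omega_{i,-r}$ become universal polynomial expressions in $\omega_{i,\lambda(h_{i,0})}^{-1}$ and $\omega_{i,1},\ldots,\omega_{i,\lambda(h_{i,0})}$.

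The main obstacle is the $A_{2n}$ short-root case. Both the integrality claim in (a) and the factor $d_\mu=2$ in (b) are subtler consequences of the $U_\mathbb F(\tlie{sl}_3^\tau)$-structure from Lemma \ref{isos}(b) and cannot be deduced by a direct transfer of the $\tlie{sl}_2$ proof. Extracting clean scalar recursions on $v$ from the identities in Lemma \ref{basicreltw}(c)(ii),(iv) — which involve $\mathbb Z$-linear combinations of non-PBW lowering monomials — requires careful bookkeeping modulo $U_\mathbb Z(\tlie g^\sigma) U_\mathbb Z((\tlie n^+)^\sigma)^0$, together with the Heisenberg identity \eqref{e:heis} controlling the interaction between $x^-_{\alpha,0}$ and $x^-_{2\alpha,1}$.
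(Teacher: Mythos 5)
Your outline for parts (a)--(d) runs parallel to the paper's argument (extract $\lie{sl}_2$-type subalgebras via Lemma \ref{isos}, read the diagonal coefficients of the $\Lambda$'s from Lemma \ref{basicreltw} after killing $U_\mathbb F((\tlie n^+)^\sigma)^0 v$), but there are two genuine problems, one minor and one fatal.

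The minor problem is in (c)--(d). You cannot extract $\Lambda_{i,\pm r}^\sigma v=0$ ``from (b)'': part (b) concerns the $\epsilon=0$ elements $x_{\mu,0}^-\otimes t^{ms}$ with the larger bound $d_\mu\lambda(h_{\mu,0})$, whereas the formula for $\Lambda_{i,\pm r}^\sigma v$ coming from Lemma \ref{basicreltw}(a) or (c)(i) involves the shifted elements $x_{i,\mp 1}^-\otimes t^{\pm 1}$ (or $x_{2\mu,1}^-\otimes t^{\pm 1}$). The correct bound $\lambda(h_{i,0})$ for (c) comes from a different $\lie{sl}_2$-triple (with Cartan $h_{\mu,0}$), not from (b), so using the $d_\mu$ bound would only yield $\Lambda_{i,\pm r}^\sigma v=0$ for $r > 2\lambda(h_{i,0})$ in the $A_{2n}$ short case. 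Also, the bracket you cite for (a) is off by a factor: Lemma \ref{[]tw}(b) gives $[x^+_{2\mu,1},x^-_{2\mu,1}]=h_{\mu,0}$, not $2h_{i,0}$, and integrality of $\lambda(h_{i,0})$ follows because the subalgebra generated by the $\tbinom{h_{\mu,0}}{k}$ and $(x^\pm_{2\mu,1}\otimes t^{\mp1})^{(k)}$ is $U_\mathbb F(\lie{sl}_2)$; your ``$h_{\alpha_i,0}/2$-eigenvalue'' phrasing would only give half-integrality. Part (d) also needs the two-step paper argument: $(x^-_{i,0}\otimes 1)^{(\lambda(h_{i,0}))}v$ is first shown nonzero via the ``$r=0$'' $\lie{sl}_2$, then shown to be a lowest-weight vector for the ``$r=1$'' $\lie{sl}_2$, and $(x^+_{i,-1}\otimes t)^{(\lambda(h_{i,0}))}$ lifts it back up; this is not a single ``dimension count.''

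The fatal gap is your argument for the ``moreover'' statement. You derive the universal polynomials $f_{i,r}$ by asserting that $\Lambda_i^{\sigma,-}(u)v$ and $\Lambda_i^{\sigma,+}(u)v$ are polynomials of degree $\lambda(h_{i,0})$ related by $u\mapsto u^{-1}$ up to the factor $u^{\lambda(h_{i,0})}\omega_{i,\lambda(h_{i,0})}^{-1}$. But this is precisely the identity \eqref{Lambdaonv}/\eqref{drinfeldaction}, and the paper explicitly states that the differential-equation proof of it from the characteristic-zero literature breaks down in positive characteristic. In fact, the logical role of the ``moreover'' part of the proposition is to establish a \emph{weaker} statement (the coefficients $\omega_{i,-r}$ are determined by a \emph{universal} polynomial recursion in $\omega_{i,1},\dots,\omega_{i,\lambda(h_{i,0})},\omega_{i,\lambda(h_{i,0})}^{-1}$, uniform in $V$ and $v$), from which \eqref{drinfeldaction} is later deduced by exhibiting evaluation modules where it visibly holds. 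Assuming \eqref{drinfeldaction} to prove the ``moreover'' is therefore circular. The paper's actual proof sets $k=l+r$ in Lemma \ref{basicreltw}, applies $(x^+_{i,2}\otimes t^{-2})^{(r)}$ (or the analogous $(x^+_{\alpha_i,1}\otimes t^{-1})^{(2r)}$ in the $A_{2n}$ short case) to the resulting relation to isolate a term $(-1)^r\omega_{i,-r}\omega_{i,l}v$, and observes that the remaining terms $H_{r,j}v$ involve only $\Lambda_{i,s}$ with $s>-r$; a downward induction on $r$ then closes the argument. None of that mechanism appears in your sketch.
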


\proof Part (a) is immediate when $\lie g$ is not of type $A_{2n}$. If $\lie g$ is of type $A_{2n}$, given $\mu \in R_s$, the subalgebra  $U_\mathbb F(\tlie g_{2\mu,1})$  of $U_\mathbb F(\tlie g^\sigma)$ generated by $\{ \tbinom{h_{\mu,0}}{k}, (x^\pm_{2\mu,1}\otimes t^{\mp 1})^{(k)}:k\ge 0\}$ is isomorphic to $U_\mathbb F(\lie{sl}_2)$ by Lemma \ref{[]tw}(b). Hence,  $\lambda(h_{\mu,0}) \in \mathbb Z$.

For part (b), observe that Lemma \ref{[]tw} implies that, if $r\in\mathbb Z$ and $\mu \in \wt(\lie g_{\pm r})\cap Q_0^+\setminus\{0\}$, the elements $(x^\pm_{\mu,\pm r} \otimes t^{\pm r})^{(k)}$,  $k\in\mathbb Z_+$, generate a subalgebra $U_\mathbb F(\tlie g_{\mu,r})$  of $U_\mathbb F(\tlie g^\sigma)$ isomorphic to $U_\mathbb F(\lie{sl}_2)$. Hence, the equality $(x^-_{\mu,0} \otimes t^{ms})^{(k)}v = 0$ in each case follows from the fact that $v$ generates a finite-dimensional highest-weight module for this subalgebra, which is then isomorphic to a quotient of the Weyl module $W_\mathbb F(d_\mu\lambda(h_{\mu,0}))$.

Further, if either $\lie g$ is not of type $A_{2n}$ and $\alpha_i \in R_s$, or  $\lie g$ is of type $A_{2n}$ and $\alpha_i \in R_l$,  we conclude that $\Lambda_{i,\pm r}^\sigma v=0$ for $r>|\lambda(h_{i,0})|$ by setting $\alpha=\alpha_i,s=0,l=k=r$ in Lemma \ref{basicreltw}(a). Similarly, application of Lemma \ref{basicreltw}(b) proves the claim for $\lie g$ not of type $A_{2n}$ and $\alpha_i\in R_l$.  For the case that $\lie g$ is of type $A_{2n}$ and $\alpha_i\in R_s$, we conclude that $\Lambda_{i,\pm r}^\sigma v=0$ for $r>|\lambda(h_{i,0})|$ by using part (i) of Lemma \ref{basicreltw}(c) with $\alpha=\alpha_i,s=a=0,k=r$, along with the already used observation that $U_\mathbb F(\tlie g_{2\mu,1})$ is isomorphic to $U_\mathbb F(\lie{sl}_2)$. This completes the proof of (c).

We now prove (d). Let $W=U_\mathbb F(\tlie g^\sigma) v$ which is a finite-dimensional $U_\mathbb F(\lie g_0)$-module having $\lambda$ as its highest-weight. Hence, $\wt(W) = \wt(W_\mathbb F(\lambda))$ and Theorem \ref{t:rh}(e) then implies that
\begin{equation}\label{e:ellhwreltw}
\lambda-(\lambda(h_{i,0})+k)\alpha_i \notin\wt(W) \quad\text{for any}\quad k>0.
\end{equation}
We now split the proof in cases according with the conditions given by each item of the Lemma \ref{basicreltw}. Suppose first that either $\lie g$ is not of type $A_{2n}$ and $\alpha_i \in R_s$, or  $\lie g$ is of type $A_{2n}$ and $\alpha_i \in R_l$.  By considering the subalgebra  $U_\mathbb F(\tlie g_{\alpha_i,0})\cong U_\mathbb F(\lie{sl}_2)$, we conclude that $(x_{i,0}^-\otimes 1)^{(\lambda(h_{i,0}))}v\ne 0$. It follows from \eqref{e:ellhwreltw} that $(x^-_{i,-1}\otimes t)^{(k)}(x^-_{i,0}\otimes 1)^{(\lambda(h_{i,0}))}v=0$ for all $k>0$. Therefore, $(x_{i,0}^-\otimes 1)^{(\lambda(h_{i,0}))}v$ generates a lowest-weight finite-dimensional representation of $U_\mathbb F(\tlie g_{\alpha_i,1})\cong U_\mathbb F(\lie{sl}_2)$. This implies that $0\ne(x_{i,-1}^+ \otimes t)^{(\lambda(h_{i,0}))}(x_{i,0}^-\otimes 1)^{(\lambda(h_{i,0}))}v=\Lambda_{i,\pm \lambda( h_{i,0})}^\sigma v$, where the last equality follows from Lemma \ref{basicreltw}(a).  Assume now that $\lie g$ is not of type $A_{2n}$ and $\alpha_i \in R_l$. Proceeding similarly we conclude that $(x_{i,0}^-\otimes 1)^{(\lambda(h_{i,0}))}v\ne 0$ and generates a lowest-weight finite-dimensional representation of $U_\mathbb F(\tlie g_{\alpha_i,m})$. The conclusion now follows from Lemma \ref{basicreltw}(b) in a similar fashion. Finally, let $\lie g$ be of type $A_{2n}$ and $\alpha_i \in R_s$. It now follows that $(x_{2\alpha_i,1}^-\otimes t)^{(\lambda(h_{i,0}))}v\ne 0$ generates a lowest-weight finite-dimensional representation of $U_\mathbb F(\tlie g_{\alpha_i,0})$.
A similar application of part (i) of Lemma \ref{basicreltw}(c) completes the proof.

The proof of the last statement is also split in cases fitting the conditions of Lemma \ref{basicreltw}. Assume first that either $\lie g$ is not of type $A_{2n}$ and $\alpha_i \in R_s$, or $\lie g$ is of type $A_{2n}$ and $\alpha_i \in R_l$. In this case, setting $\alpha=\alpha_i, s=0,l=\lambda(h_{i,0})$ and $k=l+r$ in Lemma \ref{basicreltw} (a), we get for all $r\ge1$ that
\begin{equation}\label{e:l1}
0=(x_{i,0}^+\otimes 1)^{(l)}(x_{i,-1}^-\otimes t)^{(k)} v = \omega_{i,l}(x^-_{i,-1}\otimes t)^{(r)}v + \tsum_{j=1}^{l} \omega_{i,l-j}Y_jv=0,
\end{equation}
where $\omega_{i,0}=1$ and $Y_j$ is a $\mathbb Z$-linear combination of elements of the form
$$(x^-_{i,-1} \otimes t)^{(k_1)}\cdots (x^-_{i,-(r+1)} \otimes t^{r+1})^{(k_{r+1})}$$
with $\sum_n k_n = r$ and $\sum_n nk_n = r+j$, which does not depend neither on $V$ nor on $v$. Now, since $-r<r+j-2r$, it is not difficult to see that $(x^+_{i,2}\otimes t^{-2})^{(r)}Y_j \in U_\mathbb F(\tlie g^\sigma)U_\mathbb F((\tlie n^+)^\sigma)^0 + H_{r,j}$, where $H_{r,j}$ is a linear combination of monomials of the form $\Lambda_{i,r_1}^\sigma\cdots \Lambda_{i,r_m}^\sigma$ such that $-r<r_j$. Moreover,  using Lemma \ref{basicreltw}(a) once more, we get
$$(x_{i,2}^+\otimes t^{-2})^{(r)}(x_{i,-1}^-\otimes t^{})^{(r)} = (-1)^r\Lambda_{i,-r}^\sigma + U_\mathbb F(\tlie g)U_\mathbb F(\tlie n^+)^0.$$
Hence, plugging this into \eqref{e:l1}, it follows that
\begin{equation*}
    0=(x^+_{i,2}\otimes t^{-2})^{(r)}\left(\omega_{i,l}(x^-_{i,-1}\otimes t)^{(r)}v + \tsum_{j=1}^{l} \omega_{i,l-j}Y_jv\right)= (-1)^r\omega_{i,-r}\omega_{i,l}v + \tsum_{j=1}^{l} \omega_{i,l-j}H_{r,j}v
\end{equation*}
which implies that
\begin{equation}\label{indonr}
    \omega_{i,-r}\omega_{i,l}v = (-1)^r\tsum_{j=1}^{r} \omega_{i,l-j}H_{r,j}v.
\end{equation}
Now, since $H_{r,j}$ involves only the elements $\Lambda_{i,s_j}$ with $s_j>-r$, a simple induction on $r$ completes the proof in this case. Similarly, when  $\lie g$ is not of type $A_{2n}$ and $\alpha_i\in R_l$, we repeat this argument using Lemma \ref{basicreltw}(b). Finally, if $\lie g$ is of type $A_{2n}$ and $\alpha_i \in R_s$, setting $\alpha=\alpha_i$ and $k=\lambda(h_{i,0})$ on part (ii) of Lemma \ref{basicreltw}(c), we get for all $r\ge 1$ that
\begin{equation}\label{e:l2}
0=(x^-_{\alpha_i,0}\otimes 1)^{(2k)}(x^-_{2\alpha_i,1}\otimes t^{})^{(k+r)}v=\left(\omega_{i,k}(x^-_{2\alpha_i,1}\otimes t^{})^{(r)}+ \tsum_{j=1}^{k-1} \omega_{i,k-j} X_{j}\right) v
\end{equation}
where $\omega_{i,0}=1$ and  $X_{j}$ is a $\mathbb Z$-linear combination of elements of the form
$$(x^-_{\alpha_i,1} \otimes t^{})^{(s_1)}\cdots (x^-_{\alpha_i,-k} \otimes t^{k})^{(s_{k})}(x^-_{2\alpha_i,1} \otimes t^{})^{(s_1')}\cdots (x^-_{2\alpha_i,-k} \otimes t^{k})^{(s_{k}')}$$
with $s_i'<r$, $\sum_m s_m+2\sum_n s_n' = 2r$ and $\sum_m ms_m'+\sum_n ns_n = r+j$, which does not depend neither on $V$ nor on $v$. Proceeding as above, since $-r<r+j-2r$, we first conclude that
$$(x^+_{\alpha_i,1}\otimes t^{-1} )^{(2r)}X_{j} \in U_\mathbb F(\tlie g^\sigma)U_\mathbb F((\tlie n^+)^\sigma)^0 + H_{r,j}$$
where $H_{r,j}$ is a linear combination of monomials of the form $\Lambda_{i,r_1}^\sigma\cdots \Lambda_{i,r_m}^\sigma$ with $-r<r_j$. Then, we get from part (i) of Lemma \ref{basicreltw}(c) that
$$(x^+_{\alpha_i,1}\otimes t^{-1})^{(2r)} (x^-_{2\alpha_i,1}\otimes t^{})^{(r)} = -\Lambda_{i,-r}^{\sigma} + U_\mathbb F(\tlie g)U_\mathbb F(\tlie n^+)^0.$$ Hence, plugging this into \eqref{e:l2}, we get
$$0=(x^+_{\alpha_i,1}\otimes t^{-1})^{(2r)}\left(  \omega_{i,k}(x^-_{2\alpha_i,1}\otimes t^{})^{(r)}v+ \tsum_{j=1}^{k-1} \omega_{i,k-j} X_{j}v \right)= -\omega_{i,-r}\omega_{i,k}v + \tsum_{j=1}^{k-1} \omega_{i,k-j}H_{r,j}v$$
and the proof ends by induction on $r$ as it was done following \eqref{indonr}.\endproof

As in the non-twisted case, given $v$ as in the above proposition, we want to prove that
\begin{equation}\label{Lambdaonv}
\Lambda^\sigma_{i,\lambda(h_{i,0})}\Lambda^\sigma_{i,-k}v = \Lambda^\sigma_{i,\lambda(h_{i,0})-k }v \qquad\text{for all} \qquad i \in I_0,\ 0 \leq k \leq  \lambda(h_{i,0}).
\end{equation}
In other words, given $v, \lambda$, $i\in I_0$, and $\omega_{i,r}$ as in the proposition and setting $$\gb\omega_{i}(u) = 1+\tsum_{r=1}^{\lambda(h_{i,0})}  \omega_{i,r}u^r,$$ we want to show that
\begin{equation}\label{drinfeldaction}
    \Lambda_{i}^{\sigma,-}(u)v = \gb\omega_{i}^-(u)v.
\end{equation}
The element $\gb\omega:=(\gb\omega_{i})_{i\in I_0}$ is called the Drinfeld polynomial of the highest $\ell$-weight module generated by $v$.
As observed in \cite{hyperlar}, the differential equations techniques used in \cite{CPweyl,CFS} for proving \eqref{Lambdaonv} do not work in positive characteristic. Thus, in light of the last statement of the Proposition \ref{ellhwreltw}, it suffices to exhibit for each $\gb\omega \in \cal P_\mathbb F^{\sigma,+}$ one finite-dimensional highest-$\ell$-weight module with highest-$\ell$-weight $\gb\omega$ on which \eqref{drinfeldaction} is satisfied. This will be done in the next section.

\subsection{Evaluation modules}\label{ss:evm}

If $V$ is a finite-dimensional irreducible $U_\mathbb F(\tlie g^\sigma)$-module, then $V$ is generated by a vector $v$ satisfying
$$U_\mathbb F((\tlie n^+)^\sigma)^0 v =  0,\ \quad \ \tbinom{h_{i,0}}{k}v = \tbinom{\lambda(h_{i,0})}{k}v,\ \quad  \ \Lambda_{i,r}^\sigma v = \omega_{i,r}v,$$
for all $i \in I_0, k\in\mathbb Z_+$ and some $\lambda\in P_0^{\sigma,+}$, $\omega_{i,r}\in \mathbb F$. In fact, since $V$ is finite-dimensional, there exists a maximal weight $\lambda \in P_0^+$ such that $V_\lambda \ne 0$. Moreover, as $U_\mathbb F(\tlie h^\sigma)$ is commutative, we  conclude that $\Lambda_{i,r}^\sigma V_\lambda \subseteq V_\lambda$ for all $r\in \mathbb Z,i \in I_0$. It follows that there exist $v\in V_\lambda$ satisfying $U_\mathbb F(\tlie h^\sigma)v=\mathbb F v$. Hence, by Proposition \ref{ellhwreltw}(a), we have that $\lambda\in P_0^{\sigma,+}$ and the $U_\mathbb F(\tlie g^\sigma)$-submodule generated by $v$ must coincide with $V$ by the irreducibility of $V$. In particular, we have the following immediate consequence of Proposition \ref{ellhwreltw}.

\begin{prop}\label{p:smhw}
Every finite-dimensional irreducible $U_\mathbb F(\tlie g^\sigma)$-module is a highest-$\ell$-weight module whose highest $\ell$-weight lies in $ \cal P_\mathbb F^{\sigma,+}$. \hfill\qedsymbol
\end{prop}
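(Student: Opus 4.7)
My plan is to formalize the argument already sketched in the paragraph preceding the statement. Start with a finite-dimensional irreducible $U_\mathbb F(\tlie g^\sigma)$-module $V$ and, restricting to the subalgebra $U_\mathbb F(\lie g_0)\subseteq U_\mathbb F(\tlie g^\sigma)$, decompose $V$ into $P_0$-weight spaces via Theorem \ref{t:rh}(a). Choose $\lambda\in P_0^+$ maximal (under the dominance order) with $V_\lambda\ne 0$.

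\textbf{Producing a highest-$\ell$-weight generator.} By relation \eqref{comutxhtw}, $U_\mathbb F(\tlie h^\sigma)$ preserves each weight space $V_\mu$. Its image in $\mathrm{End}(V_\lambda)$ is a finite-dimensional commutative subalgebra over the algebraically closed field $\mathbb F$, so it admits a simultaneous eigenvector $v\in V_\lambda$. Any raising operator $(x^+_{\mu,-r}\otimes t^r)^{(k)}\in U_\mathbb F((\tlie n^+)^\sigma)^0$ shifts the weight by $k\mu\in Q_0^+\setminus\{0\}$ by \eqref{comutxhtw}, hence sends $v$ into $V_{\lambda+k\mu}=0$ by maximality of $\lambda$; consequently $U_\mathbb F((\tlie n^+)^\sigma)^0v=0$. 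The nonzero $U_\mathbb F(\tlie g^\sigma)$-submodule generated by $v$ must equal $V$ by irreducibility, so $V$ is a highest-$\ell$-weight module with generator $v$.

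\textbf{Identifying the highest $\ell$-weight.} The pair $(V,v)$ satisfies the hypotheses of Proposition \ref{ellhwreltw}. Part (a) yields $\lambda\in P_0^{\sigma,+}$. By part (c), the eigenvalues $\omega_{i,\pm r}$ of $\Lambda_{i,\pm r}^\sigma$ on $v$ vanish for $r>\lambda(h_{i,0})$; since $\Lambda_{i,0}^\sigma$ acts as $1$, the expressions $\gb\omega_i(u):=1+\sum_{r=1}^{\lambda(h_{i,0})}\omega_{i,r}u^r$ are polynomials in $\mathbb F[u]$ with constant term $1$, and $\gb\omega:=(\gb\omega_i)_{i\in I_0}\in\cal P_\mathbb F^{\sigma,+}$ realizes the highest $\ell$-weight of $V$ under the embedding $\cal P_\mathbb F^\sigma\hookrightarrow U_\mathbb F(\tlie h^\sigma)^*$.

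\textbf{Main obstacle.} Since the substantive work has been absorbed into Proposition \ref{ellhwreltw}, the only step requiring separate attention is the construction of a simultaneous eigenvector $v\in V_\lambda$ for the commuting family $U_\mathbb F(\tlie h^\sigma)$; this relies on $\mathbb F$ being algebraically closed, applied to the finite-dimensional image of this family in $\mathrm{End}(V_\lambda)$. The use of the \emph{maximal} weight $\lambda$, rather than merely a weight at which $\lie g_0$ acts by highest weight, is essential, since $(\tlie n^+)^\sigma$ can shift by weights such as $2\alpha\in Q_0^+$ with $\alpha\in R_s$ when $\lie g$ is of type $A_{2n}$, not only by simple roots of $\lie g_0$.
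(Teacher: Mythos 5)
Your proof is correct and follows essentially the same route as the paper: choose a maximal $P_0$-weight, extract a common eigenvector for the commutative $U_\mathbb F(\tlie h^\sigma)$ using algebraic closedness of $\mathbb F$, kill the raising operators by maximality, and then invoke Proposition \ref{ellhwreltw} to place $\lambda$ in $P_0^{\sigma,+}$ and to truncate the $\Lambda^\sigma_{i,\pm r}$-eigenvalues into a polynomial $\ell$-weight. Your added remark that one must use a \emph{maximal} weight of $V$ (rather than merely a $\lie g_0$-highest weight) since the operators $(x^+_{\mu,-r}\otimes t^r)^{(k)}$ with $r\ne 0$ are not controlled by $\lie n_0^+$ alone is a worthwhile clarification of a point the paper leaves implicit.
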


For a $U_\mathbb F(\lie g)$-module $V$, let $V(a)$ be the pull-back of $V$ by ${\rm ev}_a^\sigma$ (cf. Section \ref{s:evmaps}). In the cases $V=V_\mathbb F(\lambda)$ and $V=W_\mathbb F(\lambda)$, we shall denote the evaluation representation by $V_\mathbb F(\lambda,a)$ and by $W_\mathbb F(\lambda,a)$, respectively.

\begin{prop}\label{p:evmod}
Let $\lambda\in P_0^{\sigma,+}$ and regard it as an element of $P^+$ as in \eqref{extending}. Then, given $a\in\mathbb F^\times$ and a highest-weight $U_\mathbb F(\lie g)$-module  $V$ of highest-weight $\lambda$, $V(a)$ is a highest $\ell$-weight $U_\mathbb F(\tlie g^\sigma)$-module with Drinfeld polynomial $\gb\omega_{\lambda,a}^\sigma \in\cal P_\mathbb F^{\sigma,+}$. Moreover, the action of $\Lambda_i^{\sigma,-}(u)$ on the highest $\ell$-weight vector is given by \eqref{drinfeldaction} for all $i\in I_0$.
\end{prop}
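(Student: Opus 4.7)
Let $v \in V$ be a generating highest-weight vector of $\lie g$-weight $\lambda$ (viewed in $P^+$ via \eqref{extending}, so that $\lambda(h_j) = 0$ for every $j \notin o(I_0)$). The plan is to show that $v$ is a highest-$\ell$-weight vector in the pullback $V(a)$, to identify its $\ell$-weight on both $\Lambda^{\sigma,+}_i(u)$ and $\Lambda^{\sigma,-}_i(u)$ with $\gb\omega^\sigma_{\lambda,a}$ and its $-$ dual respectively, and then to verify that $v$ still generates $V(a)$ over $U_\mathbb F(\tlie g^\sigma)$. First I would handle the vanishing $U_\mathbb F((\tlie n^+)^\sigma)^0 v = 0$: a generator $(x^+_{\mu,-r}\otimes t^r)^{(k)}$ with $k>0$ acts as $a^{rk}(x^+_{\mu,-r})^{(k)}$ via $\ev^\sigma_a$, and by the construction in Subsection \ref{s:chevalley} the element $x^+_{\mu,-r}$ is either a multiple of a single $x^+_\alpha$, a $\zeta$-weighted sum of commuting root vectors $x^+_{\sigma^j(\alpha)}$, or (in the $A_{2n}$ Heisenberg case when $\alpha+\sigma(\alpha)\in R$) a two-term combination inside the Heisenberg algebra spanned by $x^+_\alpha$, $x^+_{\sigma(\alpha)}$, $x^+_{\alpha+\sigma(\alpha)}$. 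Expansion via \eqref{e:divpsum} or \eqref{e:heis} then writes $(x^+_{\mu,-r})^{(k)}$ as a sum of monomials each carrying some factor $(x^+_\beta)^{(k')}$ with $k'>0$, which kills $v$. Similarly, expanding $\tbinom{h_{i,0}}{k}$ via \eqref{e:expbin} as a polynomial in the commuting $\tbinom{h_{\sigma^j(o(i))}}{k_j}$'s and using $\lambda(h_j) = 0$ for $j \notin o(I_0)$ collapses the expansion to the term $k_0 = k$, giving $\tbinom{h_{i,0}}{k} v = \tbinom{\lambda(h_{i,0})}{k} v$.

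Next, I would compute each $\Lambda^\sigma_{i,\pm r}v$ by applying formula \eqref{e:evsigma} directly. When $\Gamma_{o(i)}=m$ (i.e., $\lie g = A_{2n}$, or $\lie g \ne A_{2n}$ with $\alpha_i\in R_s$), the image of $\Lambda^\sigma_{i,\pm r}$ is $(-a)^{\pm r}\sum_{r_0+\cdots+r_{m-1}=r}\prod_j\zeta^{-jr_j}\tbinom{h_{\sigma^j(o(i))}}{r_j}$, and applied to $v$ only the composition $r_0=r,\,r_j=0$ for $j>0$ contributes (by the same vanishing as in the previous paragraph), leaving $(-a)^{\pm r}\tbinom{\lambda(h_{i,0})}{r}v$. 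When $\Gamma_{o(i)}=1$ one obtains directly $(-a^m)^{\pm r}\tbinom{\lambda(h_{i,0})}{r}v$. Summing over $r\ge 0$ (a finite sum since $\lambda(h_{i,0})\in\mathbb Z_+$) and applying the binomial theorem yields
\begin{equation*}
\Lambda^{\sigma,+}_i(u)v = (\gb\omega^\sigma_{\lambda,a})_i(u)v \qquad\text{and}\qquad \Lambda^{\sigma,-}_i(u)v = ((\gb\omega^\sigma_{\lambda,a})^-)_i(u)v,
\end{equation*}
proving simultaneously $U_\mathbb F(\tlie h^\sigma)v \subseteq \mathbb F v$, that the Drinfeld polynomial is $\gb\omega^\sigma_{\lambda,a}$, and \eqref{drinfeldaction}.

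It remains to show $V(a) = U_\mathbb F(\tlie g^\sigma)v$. Since $V = U_\mathbb F(\lie n^-)v$ by Theorem \ref{t:rh}(c), and since the two previous paragraphs together with the triangular decomposition of $U_\mathbb F(\tlie g^\sigma)$ reduce $U_\mathbb F(\tlie g^\sigma)v$ to $U_\mathbb F((\tlie n^-)^\sigma)v$, it suffices to exhibit each vector $(x^-_\alpha)^{(k)}v$ as $\ev^\sigma_a(X)v$ for some $X \in U_\mathbb F((\tlie n^-)^\sigma)$. For $\sigma$-fixed $\alpha = o(i)$ the element $(x^-_{\alpha_i,0}\otimes 1)^{(k)}$ works directly. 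For non-fixed $\alpha$, I would combine the elements $(x^-_{\mu,\epsilon}\otimes t^{-\epsilon})^{(k)}$ over $\epsilon \in \{0,\dots,m-1\}$ via Fourier inversion in $\zeta$, together with the commutators of Lemma \ref{[]tw}(c) which shift the grading index $\epsilon$, to isolate each individual $(x^-_{\sigma^j(\alpha)})^{(k)} v$; the $A_{2n}$ case additionally requires the Heisenberg-type identities of Lemma \ref{basicreltw}(c) and the central element $x^-_{2\mu,1}$. I expect this generation step to be the main obstacle, particularly because characteristics dividing $m$ cause the Vandermonde determinant in $\zeta$ to collapse, in which case one must replace the Fourier argument by an inductive procedure on weight using the $\lie{sl}_2$- and $\lie{sl}_3^\tau$-subalgebras of Lemma \ref{isos} together with the fact that $V$ is finite-dimensional and already $\lie g$-highest-weight.
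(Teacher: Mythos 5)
Your computation of $\Lambda_i^{\sigma,\pm}(u)v$ via \eqref{e:evsigma} is exactly the paper's proof: the same two cases $\Gamma_{o(i)}=m$ and $\Gamma_{o(i)}=1$, the same collapse of the multi-index sum because $\lambda(h_{\sigma^j(\alpha_{o(i)})})=0$ for $0<j\le m-1$, and the same binomial resummation. Your first paragraph, verifying $U_\mathbb F((\tlie n^+)^\sigma)^0v=0$ and $U_\mathbb F(\tlie h^\sigma)v=\mathbb Fv$ via \eqref{e:divpsum}, \eqref{e:heis}, and \eqref{e:expbin}, fills in something the paper leaves entirely tacit; the argument is correct and is in the same spirit as the proof that $U_\mathbb A(\tlie g^\sigma)$ sits inside $U_\mathbb A(\tlie g)$.

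The one real divergence is your third paragraph on generation, and here you have put your finger on a genuine soft spot, but you should not expect to close it by the route you sketch. You rightly observe that "$V(a)$ is a highest-$\ell$-weight module'' requires $U_\mathbb F(\tlie g^\sigma)v=V(a)$, and you rightly worry that the Fourier inversion in $\zeta$ degenerates when the characteristic of $\mathbb F$ divides $m$. In fact surjectivity of $\ev_a^\sigma$ onto $U_\mathbb F(\lie g)$ is essentially a special case of the computation carried out for Proposition \ref{p:twistsurj}, and that proof uses $m\in\mathbb A^\times$ (equivalently, that the characteristic of $\mathbb F$ is not $m$) in a crucial way. Proposition \ref{p:evmod}, however, is stated in Section \ref{s:main} without that hypothesis, and its proof in the paper simply does not address generation at all. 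This does not create a downstream problem: the only use of the proposition (the construction of $T_\mathbb F(\gb\omega)$ and Theorem \ref{t:csm}) explicitly takes the cyclic submodule generated by the highest vector and uses only the $\Lambda$-formula on that vector. But as a self-contained verification of the proposition \emph{as stated}, you are correct that generation is the missing piece; your own "inductive fallback'' for the bad-characteristic case is too vague to count as an argument (and also quietly assumes $V$ finite-dimensional, which the proposition does not), so you should regard this as an imprecision in the statement rather than a step you are expected to supply.
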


\proof Let $v$ be a highest-weight vector of $V$. Suppose first that $o(i)$ is not fixed by $\sigma$. Then, by \eqref{e:evsigma},
\begin{align*}
\Lambda_i^{\sigma,\pm}(u)\ v  &= \left( \tsum_{r=0}^\infty  (-a)^{\pm r} \tsum_{\substack{ r_0,\cdots, r_{m-1} \in \mathbb Z_+ \\ r_0+\cdots+r_{m-1}= r}}^{} \tprod_{j=0}^{m-1} \zeta^{-jr_j}\tbinom{h_{\sigma^j(\alpha_{o(i)})}}{r_j} u^r\right) \ v = \left(\tsum_{r=0}^\infty (-a)^{\pm r} \tbinom{\lambda(h_{\alpha_{o(i)}})}{r} u^r\right) \ v\\ & = (1-a^{\pm 1}u)^{\lambda(h_{\alpha_{o(i)}})}\ v,
\end{align*}
where the second equality follows since $\lambda( h_{\sigma^j(\alpha_{o(i)})})=0$ for $0< j \le m-1$.
Similarly, if $o(i)$ is fixed by $\sigma$, then, again by \eqref{e:evsigma}, we have
\begin{align*}
\Lambda_i^{\sigma,\pm}(u)\ v  &= \left(\tsum_{r=0}^\infty (-a^m)^{\pm r} \tbinom{h_{\alpha_{o(i)}}}{r} u^r\right) \ v = \left(\tsum_{r=0}^\infty (-a^m)^{\pm r} \tbinom{\lambda(h_{\alpha_{o(i)}})}{r} u^r\right) \ v  = (1-a^{\pm m}u)^{\lambda(h_{\alpha_{o(i)}})}\ v.
\end{align*}
\endproof

 Now we can conclude the proof of \eqref{drinfeldaction}. Given $\gb\omega\in\cal P_\mathbb F^{\sigma,+}$, we can write
\begin{equation}\label{standard}
\gb\omega=\tprod_{k=1}^\ell\tprod_{\epsilon=0}^{m-1}\gb\omega^\sigma_{\lambda_{k,\epsilon},\zeta^{m-\epsilon} a_k}, \text{ with } \lambda_{k,\epsilon}\in P_0^+ \text{ and } a_k \in \mathbb F^\times \text{ such that } a_i^m \ne a_j^m \text{ for } i\ne j.
\end{equation}
Any such expression is called a standard decomposition of $\gb\omega$. Let $T_\mathbb F(\gb\omega)$ be the submodule of
$$\otm_{k=1}^\ell \otm_{\epsilon=0}^{m-1} V_\mathbb F(\gb\omega_{\lambda_{k,\epsilon},\zeta^\epsilon a_k}^\sigma)$$
generated by the tensor product of the highest weight vectors. Then, $T_\mathbb F(\gb\omega)$ is a  nonzero finite-dimen\-sional highest $\ell$-weight module and it follows from \eqref{e:comutLambdaser} and Proposition \ref{p:evmod} that its highest $\ell$-weight is $\gb\omega$. This completes the proof of \eqref{drinfeldaction}.

\begin{thm}\label{t:csm}
The isomorphism classes of finite-dimensional simple $U_\mathbb F(\tlie g^\sigma)$-modules is in bijection with $\cal P^{\sigma,+}$.
\end{thm}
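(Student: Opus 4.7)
The plan is to exhibit the bijection $\gb\omega \mapsto [V_\mathbb F(\gb\omega)]$, where $V_\mathbb F(\gb\omega)$ is the unique irreducible quotient of the module $T_\mathbb F(\gb\omega)$ constructed at the end of Subsection \ref{ss:evm}. That this assignment is well-defined and lands among finite-dimensional simple modules is immediate: $T_\mathbb F(\gb\omega)$ is nonzero, finite-dimensional, and a highest-$\ell$-weight module of highest $\ell$-weight $\gb\omega$, and by Proposition \ref{uniqirred} it admits a unique irreducible quotient; the image of a highest-$\ell$-weight generator is nonzero (else the maximal proper submodule would be improper), so $V_\mathbb F(\gb\omega)$ inherits highest $\ell$-weight $\gb\omega$.

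For surjectivity, take any finite-dimensional simple $U_\mathbb F(\tlie g^\sigma)$-module $V$. By Proposition \ref{p:smhw}, $V$ is a highest-$\ell$-weight module with highest $\ell$-weight some $\gb\omega \in \cal P_\mathbb F^{\sigma,+}$. Let $v \in V$ and $v' \in V_\mathbb F(\gb\omega)$ be highest-$\ell$-weight vectors. Since each is a genuine eigenvector for $U_\mathbb F(\tlie h^\sigma)$ with the same eigenvalues dictated by $\gb\omega$, the diagonal vector $(v, v') \in V \oplus V_\mathbb F(\gb\omega)$ is itself a highest-$\ell$-weight vector of $\ell$-weight $\gb\omega$. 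The submodule $M = U_\mathbb F(\tlie g^\sigma)(v, v')$ is therefore highest-$\ell$-weight and projects surjectively onto both simple factors; uniqueness of the irreducible quotient of $M$ (Proposition \ref{uniqirred}) forces $V \cong V_\mathbb F(\gb\omega)$.

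Injectivity follows by noting that the functional $\xi \in U_\mathbb F(\tlie h^\sigma)^*$ describing the $U_\mathbb F(\tlie h^\sigma)$-action on the one-dimensional top weight space is an isomorphism invariant, and by \eqref{drinfeldaction} together with the last statement of Proposition \ref{ellhwreltw} the data of $\xi$ recovers $\gb\omega_i^-(u)$ (and hence $\gb\omega_i(u)$) for each $i \in I_0$. The whole classification is formal once one grants Propositions \ref{p:smhw} and \ref{uniqirred} together with \eqref{drinfeldaction}; accordingly, the main conceptual obstacle has already been absorbed into the previous subsection, namely the construction of a nonzero finite-dimensional highest-$\ell$-weight module $T_\mathbb F(\gb\omega)$ realizing an arbitrary prescribed Drinfeld polynomial $\gb\omega \in \cal P_\mathbb F^{\sigma,+}$ and verifying the Drinfeld action formula \eqref{drinfeldaction}.
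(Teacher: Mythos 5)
Your proof is correct and follows essentially the same route as the paper: Proposition \ref{p:smhw} sets up the correspondence between simple modules and elements of $\cal P_\mathbb F^{\sigma,+}$, and the modules $T_\mathbb F(\gb\omega)$ (built from tensor products of evaluation modules, satisfying \eqref{drinfeldaction}) supply, via Proposition \ref{uniqirred}, a simple module for every $\gb\omega$. The only real difference is cosmetic: the paper orients the map from isomorphism classes to $\cal P_\mathbb F^{\sigma,+}$ and only spells out surjectivity, leaving the uniqueness of the simple highest-$\ell$-weight module with a prescribed $\gb\omega$ implicit in the notation $V_\mathbb F(\gb\omega)$; you orient the map the other way and make that uniqueness explicit via the standard diagonal-submodule argument in $V\oplus V_\mathbb F(\gb\omega)$, which is a reasonable thing to record since it is what justifies calling $V_\mathbb F(\gb\omega)$ well-defined in the first place.
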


\begin{proof}
By Proposition \ref{p:smhw}, to each isomorphism class of finite-dimensional simple $U_\mathbb F(\tlie g^\sigma)$-modules we have associated an element of $\cal P^{\sigma,+}$. Thus, it remains to see that this map is surjective. Indeed, given $\gb\omega\in \cal P^{\sigma,+}$, the module $T_\mathbb F(\gb\omega)$ constructed above is a finite-dimensional highest-$\ell$-module of highest $\ell$-weight $\gb\omega$. Hence, it has a simple quotient by Proposition \ref{uniqirred}.
\end{proof}

We shall denote by $V_\mathbb F(\gb\omega)$ any simple finite-dimensional simple $U_\mathbb F(\tlie g^\sigma)$-module with highest $\ell$-weight $\gb\omega$.

\subsection{The Weyl modules}\label{ss:wm}
We now establish the existence of universal finite-dimensional highest $\ell$-weight $U_\mathbb F(\tlie g^\sigma)$-modules.

\begin{defn}
Given $\gb\omega \in \cal P_\mathbb F^{\sigma,+}$, let $W_\mathbb F(\gb\omega)$ be the $U_\mathbb F(\tlie g^\sigma)$-module generated by a vector $v$ satisfying the defining relations of being a highest-$\ell$-weight vector of $\ell$-weight $\gb\omega$ and
\begin{gather}\label{weylrel}
(x_{\mu, 0}^- \otimes t^{mr})^{(l)}v = 0,
\end{gather}
for all $\mu \in R_0^+, l,r\in\mathbb Z, l>\wt(\gb\omega)(h_{\mu,0})$.
$W_{\mathbb F}(\gb\omega)$ is called the Weyl module of highest $\ell$-weight $\gb\omega$.
\end{defn}

It follows from the last subsection that $W_{\mathbb F}(\gb\omega)\ne 0$  for all $\gb\omega\in\cal P_\mathbb F^{\sigma,+}$ (since $V_{\mathbb F}(\gb\omega)$ is clearly a nonzero quotient of $W_{\mathbb F}(\gb\omega)$).
Moreover, Proposition \ref{ellhwreltw} implies that every finite-dimensional highest-$\ell$-weight module of highest $\ell$-weight $\gb\omega\in \cal P_\mathbb F^{\sigma,\pm}$ is  a quotient of $W_\mathbb F(\gb\omega)$. Our final goal of this section is to prove that $W_{\mathbb F}(\gb\omega)$ is finite-dimensional.

\begin{lem} Let $\gb\omega\in\cal P_\mathbb F^{\sigma,\pm}$ and $\mu\in P_0$. If $W_\mathbb F(\gb\omega)_\mu\ne 0$, then $W(\gb\omega)_{w\mu}\ne 0$ for all $w\in \cal W_0$.
\end{lem}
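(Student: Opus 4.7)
The plan is to show that each operator $(x_{\alpha,0}^\pm)^{(k)}$ for $\alpha\in R_0^+$ acts locally nilpotently on $W_\mathbb F(\gb\omega)$. Granting this, the divided-power reflection operators $\tilde r_\alpha=\exp(x_{\alpha,0}^+)\exp(-x_{\alpha,0}^-)\exp(x_{\alpha,0}^+)$ become well-defined linear automorphisms of $W_\mathbb F(\gb\omega)$, and standard $\lie{sl}_2$-theory shows that they send $W_\mathbb F(\gb\omega)_\mu$ onto $W_\mathbb F(\gb\omega)_{s_\alpha\mu}$. Since $\cal W_0$ is generated by the simple reflections $s_{\alpha_i}$, $i\in I_0$, this yields $W_\mathbb F(\gb\omega)_{w\mu}\ne 0$ for every $w\in\cal W_0$.

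Local nilpotency of the raising operators is immediate: $W_\mathbb F(\gb\omega)$ is a highest-$\ell$-weight module, and every generator of $U_\mathbb F((\tlie n^-)^\sigma)$ lowers the weight by an element of $Q_0^+$, so all weights lie in $\lambda - Q_0^+$ with $\lambda=\wt(\gb\omega)$. Writing $\alpha=\sum_i c_i\alpha_i$ with $c_i\ge 0$ not all zero, for $v'\in W_\mathbb F(\gb\omega)_\mu$ with $\mu=\lambda-\sum_i n_i\alpha_i$ the weight $\mu+k\alpha$ exits $\lambda-Q_0^+$ as soon as $k>\min_{c_i>0}n_i/c_i$, forcing $(x_{\alpha,0}^+)^{(k)}v'=0$.

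Local nilpotency of the lowering operators is the substantive step. I would set
\[
V_\alpha=\{v'\in W_\mathbb F(\gb\omega):(x_{\alpha,0}^-)^{(k)}v'=0\text{ for all sufficiently large }k\},
\]
and observe that the highest-$\ell$-weight vector $v$ lies in $V_\alpha$ by the defining relation \eqref{weylrel} evaluated at $r=0$, which gives $(x_{\alpha,0}^-)^{(k)}v=0$ whenever $k>\wt(\gb\omega)(h_{\alpha,0})$. To conclude $V_\alpha=W_\mathbb F(\gb\omega)$, I would verify that $V_\alpha$ is stable under left multiplication by each PBW generator $y\in\tilde{\cal M}^\sigma$ of $U_\mathbb F(\tlie g^\sigma)$. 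The key observation is that $\tlie g^\sigma$ decomposes under the adjoint action of the $\lie{sl}_2$-triple $\langle x_{\alpha,0}^\pm, h_{\alpha,0}\rangle$ into finite-dimensional $\lie{sl}_2$-submodules, so $(\ad x_{\alpha,0}^-)^N y=0$ for some $N=N(y)$. Expanding $(x_{\alpha,0}^-)^{(k+N)}(yv')$ via the divided-power Leibniz rule lifted from $U_\mathbb Z(\tlie g^\sigma)$, one obtains a finite $\mathbb F$-linear combination of terms $y_j(x_{\alpha,0}^-)^{(k+N-s)}v'$ with $0\le s\le N$ and the $y_j$ fixed elements of $U_\mathbb F(\tlie g^\sigma)$; each vanishes once $k$ exceeds the nilpotency index of $(x_{\alpha,0}^-)$ on $v'$.

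The main obstacle will be the careful verification of the divided-power Leibniz expansion for the more delicate PBW generators, especially in type $A_{2n}^{(2)}$, where the short-root generators carry the $\sqrt 2$ normalisation of Subsection \ref{s:chevalley} and sit inside the $\tlie{sl}_3^{\tau}$-subalgebras of Lemma \ref{isos}(b). In those cases the computation of $(\ad x_{\alpha,0}^-)$ acting on $(x_{2\alpha,1}^\pm\otimes t^{\mp 1})^{(l)}$ and on the associated Heisenberg-type elements requires the Mitzman identities of Lemma \ref{basicreltw}(c). As in the proof of Proposition \ref{ellhwreltw}, one expects these to reduce to calculations inside a copy of $U_\mathbb F(\lie{sl}_2)$ or inside the twisted $\lie{sl}_3^\tau$-subalgebra, keeping the bookkeeping finite.
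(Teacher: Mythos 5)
Your proposal is correct and is essentially an unpacking of the paper's proof. The paper says tersely that "by standard arguments" every vector of $W_\mathbb F(\gb\omega)$ lies in a finite-dimensional $U_\mathbb F(\lie g_0)$-submodule, and then invokes the $\mathcal W_0$-invariance of characters of finite-dimensional $U_\mathbb F(\lie g_0)$-modules (Theorem \ref{t:rh}(a)); the "standard argument" is precisely the local-nilpotency propagation you spell out (raising operators by weight finiteness, lowering operators by showing the space $V_\alpha$ contains $v$ via \eqref{weylrel} and is stable under the PBW generators, using ad-nilpotency lifted integrally). Your closing step via divided-power reflection operators $\tilde r_\alpha$ is just the mechanism inside Theorem \ref{t:rh}(a), so the two arguments coincide up to whether one cites the finite-dimensional theory or reproves the relevant $\lie{sl}_2$ transfer directly.
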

\proof By standard arguments, it follows from \eqref{weylrel} that every vector $w\in W_\mathbb F(\gb\omega)$ belongs to a finite-dimensional $U_\mathbb F(\lie g_0)$-submodule of $W_\mathbb F(\gb\omega)$. Now all the claims follow from the corresponding results for finite-dimensional $U_\mathbb F(\lie g_0)$-modules.
\endproof

Notice that the previous lemma implies that
\begin{equation}\label{e:wtaW}
\wt(W_\mathbb F(\gb\omega)) \subseteq\wt(W_\mathbb F(\wt(\gb\omega))).
\end{equation}

We are ready to prove the next theorem which implies that the Weyl modules are the universal finite-dimensional highest-$\ell$-weight modules.

\begin{thm} \label{t:twfd}
$ W_{\mathbb F}(\gb\omega)$ is finite-dimensional for all $\gb\omega\in\cal P_\mathbb F^{\sigma,+}$.
\end{thm}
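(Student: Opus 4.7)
The plan is to adapt the strategy used in the non-twisted setting of \cite{hyperlar} to the twisted case by means of the Garland-type identities in Lemma~\ref{basicreltw}. The argument will proceed in three main steps.

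First, by the preceding lemma (more precisely, the inclusion \eqref{e:wtaW}) combined with Theorem~\ref{t:rh}(a) applied to $U_\mathbb F(\lie g_0)$, one has that $\wt(W_\mathbb F(\gb\omega))\subseteq\wt(W_\mathbb F(\wt(\gb\omega)))$ is a finite subset of $P_0$. It therefore suffices to prove that each weight space $W_\mathbb F(\gb\omega)_\mu$ is finite-dimensional.

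Next, fix $\mu\in\wt(W_\mathbb F(\gb\omega))$ and set $\eta=\wt(\gb\omega)-\mu\in Q_0^+$. Since $v$ is a highest-$\ell$-weight vector, $W_\mathbb F(\gb\omega) = U_\mathbb F((\tlie n^-)^\sigma)v$, so by the PBW basis of $U_\mathbb F((\tlie n^-)^\sigma)$ furnished by Theorem~\ref{forms}, the weight space $W_\mathbb F(\gb\omega)_\mu$ is spanned by the images of ordered monomials $\prod_j(x^-_{\nu_j,-r_j}\otimes t^{r_j})^{(k_j)}v$ subject to the constraint $\sum_j k_j\nu_j=\eta$. Since $\eta$ is fixed, only finitely many patterns $(\nu_j,k_j)_j$ occur, so it remains to bound the spectral parameters $r_j$ for each such pattern.

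The crucial technical step is a twisted version of Garland's bounding argument. For each $\nu\in R_0^+$, I would apply the appropriate item of Lemma~\ref{basicreltw} modulo $U_\mathbb F(\tlie g^\sigma)U_\mathbb F((\tlie n^+)^\sigma)^0$, which annihilates $v$. Taking $l=k$ reproduces the action of $\Lambda^\sigma_{\nu,\pm r}$ on $v$, while taking $l<k$ and using that $\Lambda^\sigma_{i,r}v=\omega_{i,r}v$ with $\omega_{i,r}=0$ for $|r|$ larger than a bound depending only on $\gb\omega$ should produce recurrences expressing high-$|r_j|$ monomials as linear combinations of lower-$|r_j|$ monomials plus monomials of strictly smaller total degree $\sum_jk_j$. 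A simultaneous induction on $\sum_jk_j$ and on $\sum_j|r_j|$ would then yield the required bound on the $r_j$'s, and hence that $W_\mathbb F(\gb\omega)_\mu$ is finite-dimensional.

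The main obstacle will be the $A_{2n}$ case. There, by Lemma~\ref{isos}(b), the subalgebra attached to a short root $\nu$ is isomorphic to $\tlie{sl}_3^\tau$ rather than $\tlie{sl}_2$, so in place of a single Garland identity one has to work with the four-part bookkeeping of Lemma~\ref{basicreltw}(c), which mixes $x^-_{\nu,0}\otimes t^{2k}$, $x^-_{\nu,1}\otimes t^{2k+1}$, and $x^-_{2\nu,1}\otimes t^{2k+1}$. One must also verify that the defining relations \eqref{weylrel} of $W_\mathbb F(\gb\omega)$ -- which a priori only impose the $\lie{sl}_2$-type annihilations on the $x^-_{\mu,0}\otimes t^{mr}$'s -- are enough to drive this more involved induction, since Proposition~\ref{ellhwreltw} cannot be invoked on $W_\mathbb F(\gb\omega)$ itself before its finite-dimensionality is established.
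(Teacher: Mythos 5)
Your plan is in essence the paper's own: reduce to finiteness of $\wt(W_\mathbb F(\gb\omega))$ via \eqref{e:wtaW}, then show that monomials $\prod_j (x^-_{\mu_j,-s_j}\otimes t^{s_j})^{(k_j)}v$ can be restricted to a bounded window $0\le s_j<\wt(\gb\omega)(h_{\mu_j,0})$ by means of the Garland-type recurrences in Lemma~\ref{basicreltw}, with the $A_{2n}$ case requiring the $\tlie{sl}_3^\tau$/Heisenberg bookkeeping of Lemma~\ref{basicreltw}(c). Two points on implementation are worth flagging. First, your proposed double induction on $\bigl(\sum_j k_j,\ \sum_j|r_j|\bigr)$ is likely too coarse: the identities of Lemma~\ref{basicreltw}, applied to a single divided power $(x^-_{\mu,-s}\otimes t^s)^{(e)}v$, produce two kinds of residue terms — those with the same divided-power pattern but shifted spectral parameter (which do lower an $|s|$-type quantity), and those of the same total degree but strictly smaller maximal exponent (whose spectral parameters are a priori unbounded). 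The paper therefore interposes an induction on $e(\phi)=\max_j\{k_j\}$ between the total degree $d(\phi)=\sum_j k_j$ and the spectral parameter, and also invokes the straightening Lemma~\ref{sbrw} to handle the case $e<d$. Second, your circularity worry about Proposition~\ref{ellhwreltw} is resolvable without invoking finite-dimensionality: since $v$ has $\ell$-weight $\gb\omega\in\cal P_\mathbb F^{\sigma,+}$ by the defining relations of the Weyl module, the action of $\Lambda_i^{\sigma,\pm}(u)$ on $v$ is literally the polynomial $\gb\omega_i^\pm(u)$, so the vanishing of $\Lambda^\sigma_{i,\pm r}v$ for $r$ large and the nonvanishing of the top coefficient hold by definition (the paper's parenthetical citation of Proposition~\ref{ellhwreltw}(c) at that point is a mere convenience, not a logical dependency). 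With those two adjustments your sketch coincides with the paper's argument.
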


\proof
Set $\lambda = \wt(\gb\omega)$ and let $v$ be a highest-$\ell$-weight vector generating $W_{\mathbb F}(\gb\omega)$. Since \eqref{e:wtaW} implies that $\wt(W_\mathbb F(\gb\omega))$ is finite, it suffices to prove that $ W_{\mathbb F}(\gb\omega)$ is spanned by the elements
$$(x_{\mu_1,-s_1}^-\otimes t^{s_1})^{(k_1)}\cdots (x_{\mu_n,-s_n}^- \otimes t^{s_n})^{(k_n)}v,$$
with $n,k_j\in\mathbb Z_+,s_j\in\mathbb Z, \mu_j\in\wt(\lie g_{-s_j})\setminus\{0\}\cap Q_0^+, 0\le s_j< \lambda(h_{\mu_j,0}), j=1,\dots,n$.
Notice that such elements with no restriction on $s_j$  clearly span $ W_{\mathbb F}(\gb\omega)$.

Let $\cal T = Q_0^+\times \mathbb Z\times\mathbb Z_+$ and $\Xi$ be the set of functions $\phi:\mathbb N\to \cal T, j\mapsto \phi_j=(\mu_j,s_j,k_j)$, such that $k_j=0$ for all $j$ sufficiently large and $\mu_j\in\wt(\lie g_{-s_j})\setminus\{0\}$ for all $j$. Let also $\Xi'$ be the subset of $\Xi$ consisting of the elements $\phi$ such that $0\le s_j<\lambda(h_{\mu_j,0})$. Given $\phi\in\Xi$ such that $k_j=0$ for $j>m$, we associate the element
\begin{equation}\label{e:vphi}
v_\phi=  (x_{\mu_1,-s_1}^- \otimes t^{s_1})^{(k_1)}\cdots (x_{\mu_n,-s_n}^-\otimes t^{s_n})^{(k_n)}v  \in  W_{\mathbb F}(\gb\omega^\sigma).
\end{equation}
Define the degree of $\phi$ to be $d(\phi) = \sum_j k_j$ and the maximal exponent of $\phi$ to be $e(\phi)= \max\{k_j\}$.  Clearly $e(\phi)\le d(\phi)$ and $d(\phi)\ne 0$ implies $e(\phi)\ne 0$. Since there is nothing to be proved when $d(\phi)=0$ we assume from now on that  $d(\phi)>0$. Set
\begin{equation*}
\Xi_{d,e} = \{\phi\in \Xi: d(\phi)=d \text{ and } e(\phi)=e\} \qquad\text{and}\qquad \Xi_d=\bigcup\limits_{1\le e\le d} \Xi_{d,e}.
\end{equation*}
We prove by induction on $d$ and sub-induction on $e$ that, if $\phi\in\Xi_{d,e}$ is such that there exists $j$ with $s_j<0$ or $s_{j}\ge\lambda(h_{\mu_{j},0})$, then $v_\phi$ is in the span of vectors associated to elements in $\Xi'$. More precisely, given $0<e\le d\in\mathbb N$, we assume, by induction hypothesis, that this statement is true for every $\phi$ which belongs either to $\Xi_{d,e'}$ with $e'<e$ or to $\Xi_{d'}$ with $d'<d$. The proof is split in two cases according to whether $e=d$ or $e<d$.

When $e=d$, it follows that $v_\phi= (x_{\mu,-s}^- \otimes t^s)^{(e)}v$ for some $\mu$ and $s$. Suppose first that $e=1$. We split the proof in cases fitting the conditions of Lemma \ref{basicreltw}.
If either $\lie g$ is not of type $A_{2n}$ and $\mu \in R_s$, or if $\lie g$ is of type $A_{2n}$ and $\mu \in R_l$, setting $l=\lambda(h_{\mu, 0})$ and $k=l+1$ on Lemma \ref{basicreltw}(a), we get
\begin{equation*}
    \left( ( X^-_{\mu;r,\pm}(u))^{}\Lambda_{\mu}^{\sigma,\pm}(u)\right)_{\lambda(h_{\mu, 0})+1} v=0 \qquad\text{for all}\qquad  r\in\mathbb Z
\end{equation*}
or, equivalently,
\begin{equation}\label{1e=1}
(x^-_{\mu,-(r+1)}\otimes t^{r+1}\Lambda_{\mu,\pm l}^{\sigma}+x^-_{\mu,-(r+2)} \otimes t^{r+2}\Lambda_{\mu,\pm (l-1)}^{\sigma}+\cdots+ x^-_{\mu,-(r+l+1)} \otimes t^{r+l+1})v=0
\end{equation}
for all $r\in\mathbb Z$. Now, we consider the cases $s\ge l$ and $s<0$ separately and proceed with a further induction on $s$ and $|s|$, respectively. If $s\ge l$, this is easily done by taking $r=s-l-1$ in \eqref{1e=1}. Observing that $\Lambda_{\mu,\pm l}^{\sigma}v\ne 0$ (Lemma \ref{ellhwreltw}(c)), the case $s<0$ is dealt with by taking $r=s-1$ in \eqref{1e=1}.
Proceeding similarly, if $\lie g$ is not of type $A_{2n}$ and $\mu_i \in R_l$, we repeat this argument using  Lemma \ref{basicreltw}(b), and, if $\lie g$ is of type $A_{2n}$ and $\mu \in R_s$ or $\mu\in 2R_s$, the conclusion follows from parts (i) and (iii) of Lemma \ref{basicreltw}(c). We omit the details. This concludes the case $e=1$.
Suppose now that $e>1$. If either $\lie g$ is not of type $A_{2n}$ and $\mu \in R_s$, or if $\lie g$ is of type $A_{2n}$ and $\mu \in R_l$, setting $l=e\lambda(h_{\mu,0})$ and $k=l+e$ in Lemma \ref{basicreltw}(a) we obtain
\begin{equation}\label{1e>1}
\tsum_{j=1}^{\lambda(h_{\mu,0})} (x_{\mu,-(r+j+1)}^-\otimes t^{r+j+1})^{(e)}\Lambda_{\mu,l-ej}v + \substack{\text{ other terms in the span of vectors } \\ v_{\phi'} \text{ with } \phi'\in \Xi_{e,e'} \text{ for } e'<e} = 0
\end{equation}
for all $r\in\mathbb Z$. As before, we consider the cases $s\ge \lambda(h_{\mu,0})$ and $s< 0$ separately and prove the statement by a further induction on $s$ and $|s|$, respectively. If $s\ge\lambda(h_{\mu,0})$, set $r=s-1-\lambda(h_{\mu,0})$ in \eqref{1e>1} and notice that, for $1\le j\le \lambda(h_{\mu,0})$, we have  $0<  r+j+1 \le s$, from the induction easily follows.
For $s<0$, observe that $\Lambda_{\mu,l-e\lambda({h_{\mu,0}})}v=v \ne 0$ and, therefore,
$$\{j\in\mathbb Z_+:j\le\lambda(h_{\mu,0}) \text{ and }\Lambda_{\mu,l-ej}v \ne 0\}\ne\emptyset.$$
Let $j_0$ be the minimum of this set. Then, \eqref{1e>1} can be rewritten as
\begin{equation}\label{1e>1'}
\tsum_{j=j_0}^{\lambda(h_{\mu,0})} (x_{\mu,-(r+j+1)}^- \otimes t^{r+j+1})^{(e)}\Lambda_{\mu,l-ej}v + \substack{\text{ other terms in the span of vectors } \\ v_{\phi'} \text{ with } \phi'\in \Xi_{e,e'} \text{ for } e'<e} = 0.
\end{equation}
Since $\omega_{\mu,l-ej_0} \ne 0$ by definition of $j_0$, the induction on $|s|$ is easily done by using \eqref{1e>1'} with  $r=s-1-j_0$.
By using Lemma \ref{basicreltw}(b) and part (iii) of Lemma \ref{basicreltw}(c), respectively, the cases $\lie g$ is not of type $A_{2n}$ and $\mu \in R_l$, and $\lie g$ is of type $A_{2n}$ and $\mu \in 2R_s$ can be carried out similarly. It remains to consider the case $\lie g$ of type $A_{2n}$ and $\mu\in R_s$. This time we set $r=e$, $k=\lambda(h_{\mu,0})$, $s=r$ and $\alpha=\mu$ on part (iv) of Lemma \ref{basicreltw}(c) to get
\begin{equation}\label{2e>1}
\tsum_{j=1}^{\lambda(h_{\mu,0})+1} (x_{\mu,-(r+j-1)}^-\otimes t^{r+j-1})^{(e)}\Lambda_{\mu,ek+e-ej}v + \substack{\text{ other terms in the span of vectors } \\ v_{\phi'} \text{ with } \phi'\in \Xi_{e,e'} \text{ for } e'<e} = 0
\end{equation}
for all $r\in\mathbb Z$. Again, we proceed by induction on $s$ and $|s|$ separately working with \eqref{2e>1} similarly to the way we worked with \eqref{1e>1}. This completes the case $e=d$.

If $e<d$, we can assume  by inductions hypothesis on $d$, that $0\le s_j<\lambda(h_{\mu_j,0})$ for $j>1$ in \eqref{e:vphi}. Hence, an application of Lemma \ref{sbrw} completes the argument.
\endproof

\section{Twisted modules via restriction of the action}\label{s:tmvr}

We shall use the following notation and assumptions throughout the section. We assume that $m$ is not the characteristic of $\mathbb F$. Given a $U_\mathbb F(\tlie g)$-module $V$, we denote by $V^\sigma$ the module for $U_\mathbb F(\tlie g^\sigma)$ obtained by restriction of the action. We also let $\mathbb K$ be the fraction field of $\mathbb A$ (hence, $\mathbb K$ has characteristic zero).
The image of $a\in\mathbb A$ in $\mathbb F$ will be denoted by $\bar a$.

\subsection{Statement of the main theorem}

The goal of this section is to prove the following theorem.

\begin{thm}\label{t:tmvr}
Let $\gb\pi \in \cal P_\mathbb F^{\sigma,+}$. Then, there exists $\gb\omega\in\cal P_\mathbb F^+$ such that
$$V_\mathbb F(\gb \omega)^\sigma \cong V_\mathbb F(\gb\pi) \qquad\text{and}\qquad W_\mathbb F(\gb \omega)^\sigma \cong W_\mathbb F(\gb\pi).$$
 \end{thm}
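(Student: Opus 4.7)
Given $\gb\pi\in \cal P_\mathbb F^{\sigma,+}$, fix a standard decomposition as in \eqref{standard},
\[\gb\pi = \prod_{k=1}^\ell \prod_{\epsilon=0}^{m-1} \gb\omega^\sigma_{\lambda_{k,\epsilon},\, \zeta^{m-\epsilon}a_k},\]
with $\lambda_{k,\epsilon}\in P_0^+$ and $a_i^m \ne a_j^m$ for $i\ne j$, and set
\[\gb\omega = \prod_{k=1}^\ell \prod_{\epsilon=0}^{m-1} \gb\omega_{\lambda_{k,\epsilon},\, \zeta^{m-\epsilon}a_k}\in \cal P_\mathbb F^+,\]
identifying each $\lambda_{k,\epsilon}\in P_0^{\sigma,+}$ with its image in $P^+$ via \eqref{extending}. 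Since $m \ne \mathrm{char}\,\mathbb F$ and $\mathbb F$ is algebraically closed, $\zeta$ is a primitive $m$-th root of unity in $\mathbb F$, so the evaluation points $\zeta^{m-\epsilon}a_k$ are pairwise distinct in $\mathbb F^\times$. By Proposition \ref{p:evmoduntw}, $V_\mathbb F(\gb\omega) \cong \bigotimes_{k,\epsilon} V_\mathbb F(\lambda_{k,\epsilon},\zeta^{m-\epsilon}a_k)$, and an analogous tensor description holds for $W_\mathbb F(\gb\omega)$.

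Let $v$ denote the tensor of the highest-weight vectors in this tensor product (in either the simple or Weyl setting). The first step is to verify that $v$ is a highest-$\ell$-weight vector for the restricted $U_\mathbb F(\tlie g^\sigma)$-action, with $\ell$-weight equal to $\gb\pi$. Annihilation by $U_\mathbb F((\tlie n^+)^\sigma)^0$ is immediate since $U_\mathbb F(\tlie n^+)^0$ already kills each tensor factor's highest-weight vector. Applying Proposition \ref{p:evmod} to each factor and using the coalgebra formula \eqref{e:comutLambdaser} for $\Lambda_i^{\sigma,\pm}(u)$, one computes by multiplicativity that $\Lambda_i^{\sigma,\pm}(u)\, v = \gb\pi_i^\pm(u)\, v$ for all $i\in I_0$, so $v$ has the asserted $\ell$-weight. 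Consequently, Proposition \ref{ellhwreltw} and Theorem \ref{t:twfd} yield a surjection $W_\mathbb F(\gb\pi) \twoheadrightarrow U_\mathbb F(\tlie g^\sigma) v$, and hence $V_\mathbb F(\gb\pi)$ appears as a subquotient of $V_\mathbb F(\gb\omega)^\sigma$.

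For the simple case, my plan is to show $U_\mathbb F(\tlie g^\sigma)v = V_\mathbb F(\gb\omega)$ and that $V_\mathbb F(\gb\omega)^\sigma$ is irreducible over $U_\mathbb F(\tlie g^\sigma)$, which together force $V_\mathbb F(\gb\omega)^\sigma \cong V_\mathbb F(\gb\pi)$. Generation hinges on the fact that each twisted evaluation map $\ev^\sigma_a: U_\mathbb F(\tlie g^\sigma)\to U_\mathbb F(\lie g)$ is surjective whenever $a\in\mathbb F^\times$ and $m\ne\mathrm{char}\,\mathbb F$: one recovers the Chevalley generators of $\lie g$ from the images $a^{rk}(x^\pm_{\mu,-r})^{(k)}$ by inverting the Vandermonde matrix in $\zeta$ implicit in \eqref{e:basisrel}, and a Chinese-remainder argument based on the distinctness of the points $\zeta^{m-\epsilon}a_k$ then promotes this to surjectivity of $(\ev^\sigma_{\zeta^{m-\epsilon}a_k})_{k,\epsilon}: U_\mathbb F(\tlie g^\sigma)\to\prod_{k,\epsilon} U_\mathbb F(\lie g)$. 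Combined with the tensor decomposition of $V_\mathbb F(\gb\omega)$, this yields $U_\mathbb F(\tlie g^\sigma)v = V_\mathbb F(\gb\omega)$. Irreducibility then follows intrinsically: any nonzero $U_\mathbb F(\tlie g^\sigma)$-submodule must contain $v$ (by the generation/surjectivity picture together with a Proposition \ref{uniqirred}-style argument applied factorwise), and therefore equals $V_\mathbb F(\gb\omega)$.

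For the Weyl case, the surjection $W_\mathbb F(\gb\pi)\twoheadrightarrow W_\mathbb F(\gb\omega)^\sigma$ obtained above must be upgraded to an isomorphism, and this I would handle via reduction modulo $p$, following the hint in the introduction that this part cannot be performed intrinsically. Construct $\mathbb A$-forms $W_\mathbb A(\gb\pi)$ and $W_\mathbb A(\gb\omega)$ whose residue-field specializations recover the $\mathbb F$-Weyl modules and whose generic fibers over $\mathbb K = \mathrm{Frac}(\mathbb A)$ yield the characteristic-zero Weyl modules. The CFS theorem gives $W_\mathbb K(\gb\omega)^\sigma\cong W_\mathbb K(\gb\pi)$, so $\dim_\mathbb K W_\mathbb K(\gb\omega) = \dim_\mathbb K W_\mathbb K(\gb\pi)$. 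Under the conjecture on the independence of the Weyl module dimension on the ground field (see Remark \ref{r:conj}), these equal $\dim_\mathbb F W_\mathbb F(\gb\omega)$ and $\dim_\mathbb F W_\mathbb F(\gb\pi)$ respectively; consequently the surjection $W_\mathbb F(\gb\pi)\twoheadrightarrow W_\mathbb F(\gb\omega)^\sigma$ has equal source and target dimensions and must be an isomorphism. The main obstacle is precisely this dimension comparison, which is unavoidable within the hyperalgebra framework and is the reason for invoking reduction modulo $p$ and the characteristic-zero CFS theorem.
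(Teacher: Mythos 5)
Your choice of $\gb\omega$ is the problem. You take
$\gb\omega = \prod_{k=1}^\ell\prod_{\epsilon=0}^{m-1}\gb\omega_{\lambda_{k,\epsilon},\,\zeta^{m-\epsilon}a_k}$,
which uses $m\ell$ evaluation points whose $m$-th powers coincide in groups of size $m$. The paper instead bundles the $\sigma$-orbit data into a \emph{single} weight per point, setting
$\mu_k=\sum_{\epsilon=0}^{m-1}\sum_{i\in I_0}e_{k,\epsilon,i}\,\sigma^\epsilon(\omega_{o(i)})$ and
$\gb\omega=\prod_{k=1}^\ell\gb\omega_{\mu_k,a_k}$,
so that $V_\mathbb F(\gb\omega)\cong\bigotimes_{k}V_\mathbb F(\mu_k,a_k)$ with only $\ell$ points having pairwise distinct $m$-th powers. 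These two choices give genuinely different $U_\mathbb F(\tlie g)$-modules: in the example $\lie g=A_3$, $m=2$, $\ell=1$, $\lambda_{1,0}=\lambda_{1,1}=\omega_1$, the paper's module is $V_\mathbb F(\omega_1+\omega_3,a_1)$ of dimension $15$, while yours is $V_\mathbb F(\omega_1,a_1)\otimes V_\mathbb F(\omega_1,-a_1)$ of dimension $16$, so they cannot both restrict to $V_\mathbb F(\gb\pi)$.

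The step of your argument that fails is the surjectivity claim for $(\ev^\sigma_{\zeta^{m-\epsilon}a_k})_{k,\epsilon}\colon U_\mathbb F(\tlie g^\sigma)\to\prod_{k,\epsilon}U_\mathbb F(\lie g)$. No Chinese-remainder argument can give this: the relevant coordinate ring is $\mathbb F[t^m,t^{-m}]$, and the $m$ points $\zeta^{m-\epsilon}a_k$ (for fixed $k$) all have the same image $a_k^m$ in $\operatorname{Spec}\mathbb F[t^m,t^{-m}]$, so the ideals are not comaximal. Concretely, for a fixed $k$ the image of $\tlie g^\sigma$ in $\lie g^{\oplus m}$ under $(\ev^\sigma_{\zeta^{m-\epsilon}a_k})_\epsilon$ is only the ``twisted diagonal'' $\{(\sigma^\epsilon(y))_\epsilon : y\in\lie g\}$, not all of $\lie g^{\oplus m}$; this is exactly why the paper's Proposition \ref{p:twistsurj} carries the hypothesis $b_i^m\ne b_j^m$ for all $i\ne j$, which your $m\ell$-tuple violates. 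Consequently $V_\mathbb F(\gb\omega)^\sigma$ is, for a fixed $k$, a tensor product of twists $\bigotimes_\epsilon V_\mathbb F(\lambda_{k,\epsilon})^{\sigma^\epsilon}$ over the twisted diagonal $U_\mathbb F(\lie g)$, which is not irreducible once more than one $\lambda_{k,\epsilon}\ne 0$; it contains $V_\mathbb F(\gb\pi)$ only as a proper subquotient. The same defect carries through to your Weyl-module argument: the characteristic-zero CFS theorem does not give $W_\mathbb K(\gb\omega)^\sigma\cong W_\mathbb K(\gb\pi)$ for your $\gb\omega$, so the dimension comparison also breaks. Everything up through the computation that $v$ has twisted $\ell$-weight $\gb\pi$ is fine; the fix is to replace your $\gb\omega$ by the paper's, after which Proposition \ref{p:twistsurj} and Corollary \ref{c:twistsurj} apply as intended and the rest of your outline (including the reduction-mod-$p$ step for Weyl modules) matches the paper's argument.
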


\begin{rem}
In characteristic zero, Theorem \ref{t:tmvr} was proved in \cite{CFS}. Although our proof is parallel to the one from \cite{CFS}, part of the arguments used there, such as the explicit description of certain annihilating ideals, could not be replicated in positive characteristic. The technical differences are mostly concentrated in the results proved in Subsection \ref{s:surj} where the hypothesis that $m$ is not the characteristic of $\mathbb F$ is used. However, in the case of Weyl modules our alternate approach to annihilating ideals is not sufficient for proving the result intrinsically. Instead, we use the result in the characteristic zero setting and then use the technique of reduction modulo $p$.
\end{rem}

Before going into the technical details of the proof, let us construct the element $\gb\omega$ of the statement of Theorem \ref{t:tmvr}. Let
\begin{equation*}
\gb\pi=\tprod_{k=1}^\ell\tprod_{\epsilon=0}^{m-1}\gb\omega^\sigma_{\lambda_{k,\epsilon},\zeta^{m-\epsilon} a_k}
\end{equation*}
be a standard decomposition of $\gb\pi$  (see \eqref{standard}). By decomposing $\lambda_{k,\epsilon}$ as sum of fundamental weights (see \eqref{drinf}), it follows that
$$\gb \pi = \begin{cases} \prod_{k=1}^\ell \prod_{\epsilon=0}^{m-1}\prod_{i\in I_0} (\gb\omega_{\omega_{i},\zeta^{m-\epsilon}a_k}^\sigma)^{e_{k,\epsilon,i}}, & \text{ if $\lie g$ is not of type $A_{2n}$, } \\ \prod_{k=1}^\ell \prod_{\epsilon=0}^{m-1}\prod_{i\in I_0} (\gb\omega_{2^{\delta{i,j}}\omega_{i},\zeta^{m-\epsilon}a_k}^\sigma)^{e_{k,\epsilon,i}}, & \text{ if $\lie g$ is of type $A_{2n}$}, \end{cases}$$
for some $e_{k,\epsilon,i}$ and where $j$  in the second line is the unique index in $I_0$ such that $\alpha_j\in R_s$. Then,
\begin{equation}\label{e:tmvr}
\gb \omega = \tprod_{k=1}^\ell \gb \omega_{\mu_k,a_k} \qquad  \text{with} \qquad  \mu_k=\tsum_{\epsilon=0}^{m-1} \tsum_{i\in I_0}^{} e_{k,\epsilon,i}\sigma^\epsilon(\omega_{o(i)}).
\end{equation}

\subsection{On certain surjective homomorphisms}\label{s:surj}

Given a polynomial $g(u) = \prod_{j=1}^n (1-a_ju)\in\mathbb K[u]$, let
\begin{equation}\label{e:tilpoly}
\tilde g(t) = \prod_{j=1}^n (t-a_j)\in\mathbb K[t].
\end{equation}
For $\alpha\in R^+, k\in\mathbb Z$, define
\begin{equation*}
x_{\alpha,k, g}^\pm = x_\alpha^\pm\otimes t^k \tilde g(t).
\end{equation*}
Notice that, if $a_j\in\mathbb A$ for all $j$, then $(x_{\alpha,k, g}^\pm)^{(r)} \in U_\mathbb A(\tlie g)$ for all $r,k\in\mathbb Z,r\ge 0$. Moreover, one can easily check that the image $1\otimes (x_{\alpha,k, g}^\pm)^{(r)}$ of $(x_{\alpha,k, g}^\pm)^{(r)}$ in $U_\mathbb F(\tlie g)$ is nonzero.
The next lemma is trivially established.

\begin{lem}
Let $g(u) = \prod_{j=1}^n (1-a_ju),f(u) = \prod_{j=1}^n (1-b_ju)\in\mathbb K[u]$. Suppose that $a_j,b_j\in\mathbb A^\times$ and $\overline{a_j}=\overline{b_j}$ for all $j$. Then, $1\otimes (x_{\alpha,k, g}^\pm)^{(r)}=1\otimes (x_{\alpha,k, f}^\pm)^{(r)}$.\hfill\qedsymbol
\end{lem}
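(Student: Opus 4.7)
The plan is to exploit the fact that $\mathbb C x_\alpha^\pm\otimes\mathbb C[t,t^{-1}]$ is an abelian Lie subalgebra of $\tlie g$, so that $y:=x_{\alpha,k,g}^\pm$ and $z:=x_{\alpha,k,f}^\pm$, together with their difference $w:=y-z$, are pairwise commuting elements of $\tlie g$. Writing $\tilde g(t)-\tilde f(t)=\sum_{i} c_i t^i$, an easy expansion of the products $\prod_j(t-a_j)-\prod_j(t-b_j)$ together with the hypothesis $\overline{a_j}=\overline{b_j}$ shows that every $c_i$ lies in the maximal ideal $\mathfrak m$ of $\mathbb A$, so that
\begin{equation*}
w=\tsum_{i} c_i\,(x_\alpha^\pm\otimes t^{k+i}) \quad\text{with}\quad c_i\in\mathfrak m.
\end{equation*}

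Next, I would apply the commuting divided-power identity \eqref{e:divpsum} twice. Because $y=z+w$ is a sum of two commuting elements, one has
\begin{equation*}
y^{(r)}=\tsum_{s=0}^{r} z^{(s)}\,w^{(r-s)} = z^{(r)}+\tsum_{s=0}^{r-1} z^{(s)}\,w^{(r-s)}.
\end{equation*}
Applying \eqref{e:divpsum} a second time inside each $w^{(r-s)}$ to the commuting summands $c_i(x_\alpha^\pm\otimes t^{k+i})$ yields
\begin{equation*}
w^{(r-s)}=\tsum_{\sum n_i=r-s} \tprod_i c_i^{n_i}\,(x_\alpha^\pm\otimes t^{k+i})^{(n_i)},
\end{equation*}
where the latter monomials lie in $U_\mathbb A(\tlie g)$. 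When $r-s\ge 1$ at least one $n_i$ is positive, so the scalar factor $\prod_i c_i^{n_i}$ belongs to $\mathfrak m$.

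Putting these two expansions together gives $y^{(r)}-z^{(r)}\in\mathfrak m\, U_\mathbb A(\tlie g)$. Tensoring with $\mathbb F=\mathbb A/\mathfrak m$ over $\mathbb A$ annihilates this element, and therefore $1\otimes y^{(r)}=1\otimes z^{(r)}$ in $U_\mathbb F(\tlie g)$, as claimed. The only delicate point is verifying that each $c_i\in\mathfrak m$, which is immediate from the definition of $\tilde g, \tilde f$ and the assumption $\overline{a_j}=\overline{b_j}$; the rest is a bookkeeping exercise in the commuting setting, matching the paper's assertion that the lemma is trivially established.
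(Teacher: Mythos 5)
Your proof is correct, and it is precisely the elementary argument the paper has in mind when it declares the lemma ``trivially established'' without giving a proof: expand by \eqref{e:divpsum}, observe that the coefficient differences (elementary symmetric polynomials in the $a_j$ minus those in the $b_j$) lie in the maximal ideal $\mathfrak m$, and conclude that the difference of divided powers lies in $\mathfrak m\, U_\mathbb A(\tlie g)$, hence vanishes after tensoring with $\mathbb F$. There is no alternative argument in the paper to contrast with.
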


We can then define elements $(x_{\alpha,k, f}^\pm)^{(r)}\in U_\mathbb F(\tlie g)$ for any polynomial $f(u)=\prod_{j=1}^n (1-b_ju) \in\mathbb F[u]$
by setting
\begin{equation}\label{e:xkfinF}
(x_{\alpha,k, f}^\pm)^{(r)}=1\otimes (x_{\alpha,k,g}^\pm)^{(r)}
\end{equation}
where $g$ is any polynomial of the form $g(u) = \prod_{j=1}^n (1-a_ju)$ with $a_j\in\mathbb A^\times$ such that $\overline{a_j}=b_j$ for all $j$.

Given such $f\in\mathbb F[u]$, consider the ideal $I_\mathbb F^f$ of $U_\mathbb F(\tlie g)$ generated by $\{(x_{\alpha,k,f}^\pm)^{(r)}  :  \alpha\in R^+,r,k\in \mathbb Z,r>0\}$ and set $$U_\mathbb F(\tlie g)_f = \frac{U_\mathbb F (\tlie g)}{I_\mathbb F^f}.$$
Notice that $I_\mathbb F^f$ is a Hopf ideal of $U_\mathbb F(\tlie g)$ and, hence, the Hopf algebra structure on $U_\mathbb F(\tlie g)$ naturally induces one on $U_\mathbb F(\tlie g)_f$. Moreover, if $V,W$ are $U_\mathbb F(\tlie g)$-modules such that $I_\mathbb F^fV=I_\mathbb F^fW=0$, then $V\otimes W$ is naturally a module for $U_\mathbb F(\tlie g)_f$.

Let $\phi_f$ be the composition $U_\mathbb F(\tlie g^\sigma) \hookrightarrow U_\mathbb F(\tlie g) \to U_\mathbb F(\tlie g)_f$ where the last map is the canonical projection. The goal of this subsection is to prove the following proposition.

\begin{prop}\label{p:twistsurj}
Let $f(u)=\prod_{j=1}^n(1-b_ju)\in\mathbb F[u]$ and suppose that $b_i^m\ne b_j^m$ for all $i\ne j$. Then, for all $N\in \mathbb N$, the map $\phi_{f^N}$ is surjective.
\end{prop}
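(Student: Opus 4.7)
The plan is to combine a Chinese Remainder Theorem decomposition along the polynomial $\tilde f(t)^N$ with an inversion of the order-$m$ discrete Fourier transform. The two hypotheses are tailored to these two ingredients: the condition $b_i^m \ne b_j^m$ decouples the $\sigma$-orbits of the linear factors of $\tilde f$, while the standing assumption that $m \in \mathbb F^\times$ makes the Vandermonde matrix $(\zeta^{\ell\epsilon})_{0 \le \ell, \epsilon < m}$ invertible over $\mathbb F$; this is what lets one recover each untwisted $x_{\sigma^\ell(\alpha)}^\pm$ from the symmetrizations $x_{\alpha,\epsilon}^\pm = \sum_{\ell=0}^{m-1} \zeta^{\ell\epsilon} x_{\sigma^\ell(\alpha)}^\pm$.

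The reduction goes as follows. Because $I_\mathbb F^{f^N}$ contains every element $x_\alpha^\pm \otimes t^k \tilde f(t)^N g(t)$ with $g \in \mathbb F[t,t^{-1}]$, the images in $U_\mathbb F(\tlie g)_{f^N}$ of the generators $(x_\alpha^\pm \otimes t^k)^{(r)}$, $\binom{h_i}{k}$, and $\Lambda_{i,k}^\pm$ of $U_\mathbb F(\tlie g)$ depend only on the classes of the relevant polynomials in $R := \mathbb F[t,t^{-1}]/\tilde f(t)^N$. The CRT gives $R \cong \opl_{j=1}^n \mathbb F[t]/(t-b_j)^N$ with orthogonal idempotents $\chi_j$ summing to $1$; combined with \eqref{e:divpsum}, the commutativity of $x_\alpha^\pm \otimes p$ and $x_\alpha^\pm \otimes q$ in $U_\mathbb F(\tlie g)$, and the orthogonality of the $\chi_j$, this lets one decompose every divided power of an element $x_\alpha^\pm \otimes p(t)$ as a product over $j$, reducing surjectivity to a localized statement for each fixed $j$. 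Now fix such a $j$ and work modulo $(t-b_j)^N$. The expansion
$$t^m - b_j^m = m b_j^{m-1}(t-b_j) + \binom{m}{2} b_j^{m-2}(t-b_j)^2 + \cdots ,$$
valid because $m \in \mathbb F^\times$ and $b_j \ne 0$, shows that $t^m - b_j^m$ and $t-b_j$ generate the same ideal locally, so the inclusion $\mathbb F[t^m,t^{-m}] \hookrightarrow \mathbb F[t,t^{-1}]$ induces a ring isomorphism $\mathbb F[t^m,t^{-m}]/(t^m - b_j^m)^N \xrightarrow{\sim} \mathbb F[t]/(t-b_j)^N$. Hence every class $p(t) \in t^{-\epsilon}\mathbb F[t^m,t^{-m}]$ is reachable, and the element $x_{\alpha,\epsilon}^\pm \otimes p(t) \in \tlie g^\sigma$ projects to $\sum_{\ell=0}^{m-1} \zeta^{\ell\epsilon}\, x_{\sigma^\ell(\alpha)}^\pm \otimes p(t)\chi_j$. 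Letting $\epsilon$ run over $\{0,\ldots,m-1\}$ and inverting the DFT matrix recovers each individual $x_{\sigma^\ell(\alpha)}^\pm \otimes p(t)\chi_j$; identity \eqref{form1} then upgrades this to divided powers. The generators $\Lambda_{i,k}^\pm$ are treated in parallel via \eqref{e:tLvsL}, which expresses each $\Lambda_\mu^{\sigma,\pm}(u)$ as a product of shifted non-twisted $\Lambda^\pm(\zeta^s u)$ whose factors become independent after localization (again thanks to $b_i^m \ne b_j^m$), and the $\binom{h_i}{k}$ via \eqref{e:expbin}.

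The main obstacle I anticipate is the divided-power bookkeeping needed to pass from the Lie-algebra-level surjectivity to the hyperalgebra-level surjectivity in positive characteristic: one has to verify that the decomposition along the idempotents $\chi_j$ and the subsequent DFT inversion both respect divided-power structure, not merely linear structure. Concretely, expanding $(x_{\alpha,\epsilon}^\pm \otimes p(t))^{(r)}$ via \eqref{e:divpsum} produces a weighted sum of multi-indexed products $\prod_\ell (x_{\sigma^\ell(\alpha)}^\pm \otimes p(t))^{(r_\ell)}$ with weights $\zeta^{\epsilon \sum_\ell \ell r_\ell}$, and the DFT in $\epsilon$ only resolves the exponent $\sum_\ell \ell r_\ell$ modulo $m$, so one must complement the DFT argument with a secondary induction on $r$ (or on a suitable lexicographic refinement of the multi-index) to isolate each pure divided power. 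The $A_{2n}$ case will require additional care because the Heisenberg-type relations \eqref{e:x2Rs} and \eqref{e:heis} entangle $x_\alpha^\pm$, $x_{\sigma(\alpha)}^\pm$, and $x_{\alpha+\sigma(\alpha)}^\pm$; the same CRT-plus-DFT scheme should still apply, but one must track the extra central element produced by the Heisenberg subalgebra throughout, in the same spirit as the proof of the inclusion $U_\mathbb A(\tlie g^\sigma) \hookrightarrow U_\mathbb A(\tlie g)$ given earlier in the paper.
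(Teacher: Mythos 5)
Your building blocks (CRT along $\tilde f^N$, order-$m$ discrete Fourier transform, local comparison of $t^m-b^m$ and $t-b$ using $m\in\mathbb F^\times$) are exactly the ones the paper uses, but you assemble them in a different order and that creates a genuine gap you yourself point to. You localize first and then try to invert the DFT to recover the individual $x_{\sigma^\ell(\alpha)}^\pm\otimes p(t)\chi_j$, and then upgrade to divided powers. This last step is not just "bookkeeping": at the divided-power level, \eqref{e:divpsum} produces a sum over multi-indices $(r_0,\dots,r_{m-1})$ weighted by $\zeta^{\epsilon\sum_\ell \ell r_\ell}$, and the DFT in $\epsilon$ only separates the residue $\sum_\ell \ell r_\ell \bmod m$. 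For $m=2$ and $r=2$, the packets $(2,0)$ and $(0,2)$ both have residue $0$, so you recover $(x_\alpha^\pm)^{(2)}+(x_{\sigma(\alpha)}^\pm)^{(2)}$ but not $(x_\alpha^\pm)^{(2)}$. Worse, the fallback of getting $x_\alpha^\pm$ at degree $1$ and applying \eqref{form1} fails in positive characteristic, since $(x)^{(1)}\cdots(x)^{(1)}=r!\,(x)^{(r)}$ and $r!$ vanishes when $r\ge p$. A "secondary induction on $r$" does not obviously resolve this, and you give no hint of how it would.

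The paper avoids the whole issue by never trying to recover the individual root vectors $x_{\sigma^\ell(\alpha)}^\pm$. Instead it uses the trivial identity $x_\alpha^\pm=\frac{1}{\Gamma_\alpha}\sum_{\epsilon} x_{\alpha,\epsilon}^\pm$ (which is just row $0$ of the inverse DFT matrix, and $\Gamma_\alpha\in\mathbb A^\times$ because $m\ne\mathrm{char}\,\mathbb F$), expands divided powers of the sum by \eqref{e:divpsum} (or by \eqref{e:heis} in the $A_{2n}$ Heisenberg case), and then handles each factor $(x_{\alpha,\epsilon}^\pm\otimes t^k)^{(r)}$ separately. The point is that it does not localize $x_{\alpha,\epsilon}^\pm$; rather, for each $\epsilon$ it replaces $t^k$ by an approximant $\Psi_k^\epsilon\in t^{m-\epsilon}\mathbb A[t^m,t^{-m}]$ with $\Psi_k^\epsilon-t^k$ divisible by $\tilde g^N$. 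The existence of $\Psi_k^\epsilon$ is where CRT and the local expansion of $t^m-a^m$ enter (via surjectivity of $\mathbb A[t^m,t^{-m}]\to\mathbb A[t,t^{-1}]/\langle\tilde g^N\rangle$). Since $x_{\alpha,\epsilon}^\pm\otimes\Psi_k^\epsilon$ already lies in $\tlie g^\sigma$, its divided powers are honestly in $U_\mathbb A(\tlie g^\sigma)$ — no DFT inversion at the divided-power level is ever needed. Also note that the paper runs the argument over $\mathbb A$ and descends to $\mathbb F$ via a commutative diagram; working directly over $\mathbb F$ as you propose requires a bit more care because the elements $(x_{\alpha,k,f}^\pm)^{(r)}$ are only defined by choosing a lift to $\mathbb A$. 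If you want to salvage your proposal, the fix is to stop at the $x_{\alpha,\epsilon}^\pm$ stage and approximate the polynomial factor instead of localizing; the rest of your ingredients then fall into place as in the paper.
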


The following corollary is then easily deduced.

\begin{cor}\label{c:twistsurj}
Let $V$ be a $U_\mathbb F(\tlie g)$-module such that $I_\mathbb F^{f^N}V=0$ for some $N\in\mathbb N$ and suppose $S\subseteq V$ is such that $V=U_\mathbb F(\tlie g)S$. Then, $V=U_\mathbb F(\tlie g^\sigma)S$ and,  if $V$ is a simple $U_\mathbb F(\tlie g)$-module,  $V^\sigma$ is a simple $U_\mathbb F(\tlie g^\sigma)$-module.\hfill\qedsymbol
\end{cor}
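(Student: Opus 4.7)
The plan is to read the corollary as an immediate consequence of Proposition~\ref{p:twistsurj}, by routing everything through the quotient algebra $U_\mathbb F(\tlie g)_{f^N}$. The hypothesis $I_\mathbb F^{f^N}V=0$ says precisely that the $U_\mathbb F(\tlie g)$-module structure on $V$ descends to a $U_\mathbb F(\tlie g)_{f^N}$-module structure; composing with the canonical projection recovers the original action.

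First I would prove the first assertion. Given $S\subseteq V$ with $V=U_\mathbb F(\tlie g)S$, any $v\in V$ is a finite sum of elements $x\cdot s$ with $x\in U_\mathbb F(\tlie g)$ and $s\in S$. Since $I_\mathbb F^{f^N}$ annihilates $V$, the action of $x$ on $s$ depends only on the class of $x$ in $U_\mathbb F(\tlie g)_{f^N}$. By Proposition~\ref{p:twistsurj}, $\phi_{f^N}:U_\mathbb F(\tlie g^\sigma)\to U_\mathbb F(\tlie g)_{f^N}$ is surjective, so we can pick $y\in U_\mathbb F(\tlie g^\sigma)$ with $\phi_{f^N}(y)$ equal to the class of $x$; then $x\cdot s=y\cdot s$ in $V$. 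Collecting such replacements shows $v\in U_\mathbb F(\tlie g^\sigma)S$, hence $V=U_\mathbb F(\tlie g^\sigma)S$.

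For the simplicity claim, suppose $V$ is a simple $U_\mathbb F(\tlie g)$-module. Any $U_\mathbb F(\tlie g^\sigma)$-submodule $W\subseteq V^\sigma$ with $W\neq 0$ contains some nonzero $v$. Simplicity of $V$ over $U_\mathbb F(\tlie g)$ gives $V=U_\mathbb F(\tlie g)\{v\}$, and the hypothesis $I_\mathbb F^{f^N}V=0$ is inherited by any cyclic set, so the first part applied to $S=\{v\}$ yields $V=U_\mathbb F(\tlie g^\sigma)\{v\}\subseteq W$. Hence $W=V$ and $V^\sigma$ is simple.

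There is essentially no obstacle beyond invoking Proposition~\ref{p:twistsurj}; the only point that merits a moment's care is verifying that $I_\mathbb F^{f^N}$ indeed acts as zero on $V$ so that the descent to $U_\mathbb F(\tlie g)_{f^N}$ is legal, but this is exactly the hypothesis. No further structural properties of $\sigma$, of $f$, or of the standard decompositions entering Theorem~\ref{t:tmvr} are needed at this step.
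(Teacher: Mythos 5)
Your argument is correct and is exactly the ``easily deduced'' route the paper intends: the paper gives no written proof of Corollary~\ref{c:twistsurj}, simply remarking that it follows from Proposition~\ref{p:twistsurj}, and your write-up supplies precisely that deduction (factoring the action through $U_\mathbb F(\tlie g)_{f^N}$ and using surjectivity of $\phi_{f^N}$ to replace each $x\in U_\mathbb F(\tlie g)$ by some $y\in U_\mathbb F(\tlie g^\sigma)$ acting identically on $V$, then specializing to a cyclic vector for the simplicity claim). The only implicit point, which you need not restate but should be aware of, is that $f$ is assumed to satisfy the hypothesis of Proposition~\ref{p:twistsurj} ($b_i^m\ne b_j^m$ for $i\ne j$), since otherwise $\phi_{f^N}$ need not be surjective.
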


Fix $g$ as in \eqref{e:xkfinF} and $N\in\mathbb N$, consider the ideal $I_\mathbb A^{g^N}$ of $U_\mathbb A(\tlie g)$ defined similarly, and let $U_\mathbb A(\tlie g)_{g^N} = \frac{U_\mathbb A (\tlie g)}{I_\mathbb A^{g^N}}$. Notice also that the image of $I_\mathbb A^{g^N}$ in $U_\mathbb F(\tlie g)$ is contained in $I_\mathbb F^{f^N}$ and, hence, we have an $\mathbb A$-linear epimorphism
\begin{equation*}
U_\mathbb A(\tlie g)_{g^N} \to U_\mathbb F(\tlie g)_{f^N}.
\end{equation*}
Moreover, the following diagram commutes
\begin{displaymath}
\xymatrix{ U_\mathbb A(\tlie g^\sigma) \ar[r] \ar[d] & U_\mathbb A(\tlie g) \ar[d] \ar[r] & U_\mathbb A(\tlie g)_{g^N} \ar[d]\\
U_\mathbb F(\tlie g^\sigma) \ar[r] & U_\mathbb F(\tlie g)  \ar[r] & U_\mathbb F(\tlie g)_{f^N} }
\end{displaymath}
Hence, in order to prove Proposition \ref{p:twistsurj}, it suffices to prove that the composition $\phi_{g^N}$ of the maps in the first row of the above diagram is surjective. Notice that the hypothesis $b_i^m\ne b_j^m$ for all $i\ne j$, together with the fact that $\mathbb A$ is a DVR with $\mathbb F$ as its residue field, implies that
\begin{equation}
a_i^m- a_j^m\in\mathbb A^\times \qquad\text{for all}\qquad i\ne j.
\end{equation}

Let $\bar x$ denote the image of $x\in U_\mathbb A(\tlie g)$ in $U_\mathbb A(\tlie g)_{g^N}$. Thus, we want to prove that
$\overline{(x_{\alpha}^\pm\otimes t^k)^{(r)}}$ belongs to the image of $\phi_{g^N}$ for all $\alpha\in R^+,r,k\in \mathbb Z, r\ge 1$. Henceforth we fix $\alpha\in R^+$. By \eqref{e:basisrel} we have
\begin{equation*}
(x_{\alpha}^\pm\otimes t^k)^{(r)}  = \left(\frac{1}{\Gamma_\alpha} \tsum_{\epsilon=0}^{m -1} x_{\alpha,\epsilon}^\pm \otimes t^k\right)^{(r)}.
\end{equation*}
Since $m$ is not the characteristic of $\mathbb F$ and $\mathbb A$ is a DVR with $\mathbb F$ as residue field, we have $\Gamma_\alpha \in \mathbb A^\times$.
If $\alpha+\sigma(\alpha)\notin R$, then $[x_{\alpha,\epsilon}^\pm \otimes t^k,x_{\sigma(\alpha),\epsilon}^\pm \otimes t^k]=0$ and it follows from \eqref{e:divpsum} that it suffices to show that $\overline{(x_{\alpha,\epsilon}^\pm \otimes t^k)^{(r)}}$ belongs to the image of $\phi_{g^N}$ for all $0\le \epsilon <m$, $r\ge 1$ and $k \in \mathbb Z$. If $\alpha+\sigma(\alpha)\in R$, notice that the subalgebra of $\tlie g$ generated by $\{x_{\alpha,0}^\pm\otimes t^k , x_{\alpha,1}^\pm\otimes t^k ,x_{\alpha+\sigma(\alpha),1}^\pm\otimes t^{2k}\}$ (with a fixed choice of $\pm$) is a Heisenberg subalgebra. Then, \eqref{e:heis}  implies that it suffices to show that $\overline{(x_{\alpha,\epsilon}^\pm \otimes t^k)^{(r)}}$ and $\overline{(x^\pm_{\alpha+\sigma(\alpha),1}\otimes t^{k})^{(r)}}$ belong to the image of $\phi_{g^N}$ for all $0\le \epsilon <m$, $r\ge 1$ and $k \in \mathbb Z$. This will follow from \eqref{cl3} and \eqref{cl4} below.

We claim that, given $\epsilon=0,\dots,m-1$, and $k\in \mathbb Z$, there exists $\Psi_k^\epsilon \in  t^{m-\epsilon} \mathbb A[t^m,t^{-m}]$ such that $\Psi_k^\epsilon-t^k$ is divisible by $\tilde{g}^N$ in $\mathbb A[t,t^{-1}]$ (where $\tilde g$ is given by \eqref{e:tilpoly}). Assuming this claim, we complete the proof as follows. Write $\Psi_k^\epsilon-t^k = \sum_{i=-n_1}^{n_2} e_it^i \tilde{g}^N$ for some $n_1,n_2\in \mathbb N$ and $e_i\in\mathbb A, -n_1\le i\le n_2$. Then, another application of \eqref{e:divpsum} shows that
\begin{equation}\label{e:xainIg}
(x_{\beta}^\pm \otimes (\Psi_k^\epsilon-t^k))^{(r)} \in I_\mathbb A^{g^N} \quad \text{ for all } \quad \beta\in R^+, r\in\mathbb N.
\end{equation}
It then follows from \eqref{e:basisrel}, \eqref{e:xainIg}, and either \eqref{e:divpsum} (if $\beta+\sigma(\beta)\notin R$) or \eqref{e:heis} applied to the Heisenberg  subalgebras spanned by $\{x_{\beta}^\pm\otimes (\Psi_k^\epsilon-t^k) , x_{\sigma(\beta)}^\pm\otimes (\Psi_k^\epsilon-t^k) ,x_{\beta+\sigma(\beta),1}^\pm\otimes (\Psi_k^\epsilon-t^k)^{2}\}$ (if $\beta+\sigma(\beta)\in R$) that
$$(x_{\beta,\epsilon}^\pm \otimes (\Psi_k^\epsilon-t^k))^{(r)}\in I_\mathbb A^{g^N} \quad \text{ for all } \quad \beta\in R^+, r\in\mathbb N.$$
Using this, we now show that
\begin{equation}\label{cl1}
    \overline{(x_{\alpha,\epsilon}^\pm \otimes t^k)^{(r)}} = \overline{(x_{\alpha,\epsilon}^\pm \otimes \Psi_k^\epsilon)^{(r)}}
     \quad \text{ for all } \quad 0\le \epsilon < m, \ r,k\in \mathbb Z,  r>0,
\end{equation}
and, if $\alpha+\sigma(\alpha)\in R$, that
\begin{equation}\label{cl2}
    \overline{(x_{\alpha+\sigma(\alpha),1}^\pm \otimes t^k)^{(r)}} = \overline{(x_{\alpha+\sigma(\alpha),1}^\pm \otimes \Psi_k^1)^{(r)}}
    \quad \text{ for all } \quad r,k\in \mathbb Z,  r>0.
\end{equation}
Indeed, since $x_{\alpha,\epsilon}^\pm \otimes t^k$ and $x_{\alpha,\epsilon}^\pm \otimes (t^k- \Psi_k^\epsilon)$ commute, we get from \eqref{e:divpsum} that
$$(x_{\alpha,\epsilon}^\pm \otimes \Psi_k^\epsilon)^{(r)}=(x_{\alpha,\epsilon}^\pm \otimes (t^k-(t^k-\Psi_k^\epsilon)))^{(r)} = \tsum_{j=0}^{r} (-1)^{j}(x_{\alpha,\epsilon}^\pm \otimes t^k)^{(r-j)}(x_{\alpha,\epsilon}^\pm \otimes (t^k-\Psi_k^\epsilon))^{(j)}.$$
Since the only summand with nonzero image in $U_\mathbb A(\tlie g)_{g^N}$ is the one with $j=0$,  \eqref{cl1} follows. Equation \eqref{cl2} is proved similarly using that $x_{\alpha+\sigma(\alpha),1}^\pm \otimes t^k$ and $x_{\alpha+\sigma(\alpha),1}^\pm \otimes (t^k- \Psi_k^1)$ commute.

Finally, let $\mu \in \wt(\lie g_\epsilon)\cap Q_0^+$ such that $\mu=\alpha|_{\lie h_0}$.  If  $\alpha \in O$, then $x_{\alpha,\epsilon}^\pm = x_{\mu,\epsilon}^\pm$ by \eqref{e:xmuxalpha}. Otherwise, we have $\sigma^j(\alpha) \in O$ for some $j=0,\cdots,m-1$, and, hence, $x_{\alpha,\epsilon}^\pm = \zeta^{j\epsilon} x_{\sigma^j(\alpha),\epsilon}^\pm=\zeta^{j\epsilon} x_{\mu,\epsilon}^\pm$ by \eqref{e:basisrel} (recall that $\zeta \in \mathbb A^\times$). Since $(x_{\mu,\epsilon}^\pm \otimes \Psi_k^\epsilon)^{(r)} \in U_\mathbb A(\tlie g^\sigma)$, it follows from \eqref{cl1} that
\begin{equation} \label{cl3}
\overline{(x_{\alpha,\epsilon}^\pm \otimes t^k)^{(r)}} = \phi_{g^N}\left(c_\alpha^r (x_{\mu,\epsilon}^\pm \otimes \Psi_k^\epsilon)^{(r)}\right)
\end{equation}
with $c_\alpha=1$ if $\alpha\in O$ and $c_\alpha=\zeta^{j\epsilon}$ if $\sigma^j(\alpha)\in O$ for some $j\ne 0$.  Similarly, if $\alpha+\sigma(\alpha)\in R^+$, then $(x_{2\mu,1}^\pm \otimes \Psi_k^1)^{(r)} \in U_\mathbb A(\tlie g^\sigma)$ and \eqref{cl2} implies that
\begin{equation}\label{cl4}
    \overline{(x_{\alpha+\sigma(\alpha),1}^\pm \otimes t^k)^{(r)}}=\phi_{g^N}\left((x_{2\mu,1}^\pm \otimes \Psi_k^1)^{(r)}\right).
\end{equation}

It remains to prove the existence of $\Psi_k^\epsilon$ as claimed.
Let $\langle \tilde{g}^N \rangle$ be the ideal of $\mathbb A [t,t^{-1}]$ generated by $\tilde{g}^N$. It suffices to show that the map
$$\varphi: \mathbb A [t^m,t^{-m}] \to \mathbb A [t,t^{-1}]/\langle \tilde{g}^N \rangle \quad \text{ given by } \quad t^k\mapsto \overline{t^k} \quad \text{ for all } \quad k\in m\mathbb Z$$ is surjective. Indeed, using this fact, let $\varphi_k^\epsilon$ be any element in $\varphi^{-1}(t^{k-m+\epsilon})$ and set $\Psi_k^\epsilon(t) = t^{m-\epsilon}\varphi_k^\epsilon(t)$ which clearly has the claimed properties.

To prove that $\varphi$ is surjective, observe that, since $a_i^m \ne a_j^m$ for $i\ne j$, the Chinese Remainder Theorem implies that we have an isomorphism
\begin{equation}\label{e:crt}
\mathbb A [t,t^{-1}]/\langle \tilde{g}^N \rangle \simeq \tprod_i^{} \mathbb A[t,t^{-1}]/\langle (t-a_i)^N\rangle
\end{equation}
and an epimorphism
\begin{equation}\label{e:crtm}
\mathbb A [t^m,t^{-m}]\to \tprod_i^{} \mathbb A[t^m,t^{-m}]/\langle (t^m-a_i^m)^N\rangle.
\end{equation}
Evidently, for every $a\in\mathbb A^\times$, we also have isomorphisms
\begin{equation}\label{e:no-}
\mathbb A[t^m,t^{-m}]/\langle (t^m-a^m)^N\rangle \simeq\mathbb A[t^m]/\langle (t^m-a^m)^N\rangle \quad \text{and} \quad   \mathbb A[t,t^{-1}]/\langle (t-a)^N\rangle \simeq \mathbb A[t]/\langle (t-a)^N\rangle.
\end{equation}
Note that $(t^m-a^m)^N \in \langle (t-a)^N \rangle$. Indeed, recalling that $m\in\{2,3\}$, we have
\begin{equation}\label{e:surjclaim}
t^m-a^m = (t-a)\left( ma^{m-1} + (t-a)(t+2a)^{\delta_{m,3}}\right).
\end{equation}
Hence, the surjectivity of $\varphi$ follows from \eqref{e:crt}, \eqref{e:crtm}, and \eqref{e:no-} if we prove that
$$\psi: \mathbb A[t^m]\to \mathbb A[t]/\langle(t-a)^N\rangle \quad \text{ given by } \quad t^{mk} \mapsto \overline{t^{mk}} \quad \text{ for all } \quad k\in \mathbb N$$ is surjective for all $a\in\mathbb A^\times$.
In order to prove that $\psi$ is surjective, let
$$\lie m=(t-a)\mathbb B \qquad\text{where}\qquad \mathbb B:=\mathbb A[t]/\langle(t-a)^N\rangle.$$
Using that $t(t-a)=a(t-a)+(t-a)^2$, one easily sees by induction on $n\ge 1$ that
\begin{equation}\label{e:surjclaim0}
t^n(t-a) \in (t-a)\mathbb A+ (t-a)^2\mathbb A[t] \quad\text{for all}\quad n\ge 1.
\end{equation}
As $t-a=\frac{(t^m-a^m)}{ma^{m-1}}-\frac{(t-a)^2}{ma^{m-1}}(t+2a)^{\delta_{m,3}}$ by \eqref{e:surjclaim} and recalling that $m,a\in\mathbb A^\times$, \eqref{e:surjclaim0} implies that
\begin{equation}\label{e:surjclaim1}
\lie m \subseteq \overline{(t^m-a^m)}\mathbb A  + \lie m^2.
\end{equation}
Using this and that $t^m-a^m\in(t-a)\mathbb A[t]$, one easily proves by induction on $n\ge 1$ that
\begin{equation}\label{e:surjclaim2}
\lie m^n\subseteq \overline{(t^m-a^m)^n}\mathbb A +\lie m^{n+1} \quad \text{ for all } \quad n\ge 1.
\end{equation}
Plugging \eqref{e:surjclaim2} back in \eqref{e:surjclaim1} and proceeding recursively, it follows that
\begin{equation}
\lie m\subseteq \tsum_{i=1}^n\overline{(t^m-a^m)^i}\mathbb A + \lie m^{n+1}\quad \text{ for all } \quad n\ge 1.
\end{equation}
Taking $n=N-1$ above and observing that $\overline{(t^m-a^m)^i}\mathbb A\in{\rm Im}\ \psi$, it follows that $\lie m\subseteq {\rm Im}\ \psi$.
On the other hand, since $\mathbb A\subseteq {\rm Im}\ \psi$ and $\mathbb B= \mathbb A+\lie m$, we conclude that $\psi$ is surjective as desired.

\subsection{Proof of Theorem \ref{t:tmvr} for simple modules}

Given $\gb\omega=\prod_{j=1}^n \gb\omega_{\lambda_j,a_j}\in\cal P_\mathbb F^+$ with $a_i\ne a_j$ for $i\ne j$, set
\begin{equation}\label{e:fomega}
f_{_\gb\omega}(u) = \tprod_{j=1}^n (1-a_ju) \qquad\text{and}\qquad I_\mathbb F^{\gb\omega }= I_\mathbb F^{f_{_\gb\omega}}.
\end{equation}

\begin{lem}\label{l:If} $I_\mathbb F^{\gb\omega} V_\mathbb F(\gb\omega)=0$.
\end{lem}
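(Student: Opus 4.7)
The plan is to combine the tensor product decomposition $V_\mathbb F(\gb\omega)\cong\bigotimes_{j=1}^n V_\mathbb F(\lambda_j,a_j)$ supplied by Proposition \ref{p:evmoduntw} with a direct verification that each generator of $I_\mathbb F^\gb\omega$ acts as zero on every evaluation factor, and then use the comultiplication to promote this annihilation to the whole tensor product. Since $I_\mathbb F^\gb\omega$ is a two-sided ideal, it suffices to show that its generators $(x_{\alpha,k,f_\gb\omega}^\pm)^{(r)}$, with $\alpha\in R^+$, $r>0$, $k\in\mathbb Z$, annihilate $\bigotimes_j V_\mathbb F(\lambda_j,a_j)$.

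First, I would verify the annihilation on each single factor. Fix a generator and set $y=x_\alpha^\pm\otimes t^k\tilde f_\gb\omega(t)\in\tlie g$; this is an element of the Lie algebra, so in $U_\mathbb K(\tlie g)$ we have $y^r=r!\,y^{(r)}$, and applying the algebra homomorphism $\ev_{a_j}$ yields
\begin{equation*}
r!\,\ev_{a_j}(y^{(r)})=\ev_{a_j}(y)^r=\bigl(a_j^k\tilde f_\gb\omega(a_j)\bigr)^r(x_\alpha^\pm)^r.
\end{equation*}
Thus $\ev_{a_j}(y^{(r)})=(a_j^k\tilde f_\gb\omega(a_j))^r(x_\alpha^\pm)^{(r)}$, an identity which is originally computed in $U_\mathbb K(\lie g)$ but, since both sides lie in the free $\mathbb A$-module $U_\mathbb A(\lie g)$, also holds in $U_\mathbb A(\lie g)$. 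By definition of $f_\gb\omega$ in \eqref{e:fomega}, $a_j$ is a root of $\tilde f_\gb\omega$, so the right-hand side vanishes; reducing modulo the maximal ideal of $\mathbb A$ shows that $(x_{\alpha,k,f_\gb\omega}^\pm)^{(r)}$ acts as zero on $V_\mathbb F(\lambda_j,a_j)$ for every $j$.

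Second, I would pass to the tensor product via the comultiplication. Since $y$ lies in $\tlie g$, formula \eqref{e:comultdp} gives $\Delta(y^{(r)})=\sum_{l=0}^r y^{(l)}\otimes y^{(r-l)}$. If $v_1,v_2$ are vectors in modules on which every $y^{(s)}$ with $s>0$ acts as zero, then only the $l=0$ and $l=r$ terms in the expansion could survive, and both vanish. A straightforward induction on the number of tensor factors then shows that $y^{(r)}$ annihilates $\bigotimes_{j=1}^n V_\mathbb F(\lambda_j,a_j)$ for every $r>0$, completing the proof.

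The only subtlety to watch is the justification of $\ev_{a_j}(y^{(r)})=\ev_{a_j}(y)^{(r)}$ as an identity in the integral form: this is handled by doing the calculation in characteristic zero, where $r!$ is invertible, and invoking the freeness of $U_\mathbb A(\lie g)$ over $\mathbb A$ to rule out torsion. No other obstacles are expected.
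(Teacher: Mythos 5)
Your proof is correct and follows essentially the same route as the paper: reduce to tensor products of evaluation modules via Proposition \ref{p:evmoduntw}, kill each factor by an evaluation-map computation, and propagate to the tensor product using the comultiplication formula \eqref{e:comultdp}. The only difference is that you spell out the justification of $\ev_{a_j}\bigl((x^\pm_{\alpha,k,f})^{(r)}\bigr)=(a_j^k\tilde f(a_j))^r(x_\alpha^\pm)^{(r)}$ by passing through characteristic zero and using freeness of the integral form, whereas the paper records this as immediate from the definition of $\ev_a$ (cf.\ \eqref{e:ev(gen)}).
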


\proof
To shorten notation, set $f=f_\gb\omega$. Assume first that $\gb\omega = \gb\omega_{\lambda,a}$ for some $\lambda\in P^+$ and $a\in\mathbb F^\times$. In particular, $V_\mathbb F(\gb\omega)\cong V_\mathbb F(\lambda,a)$. Then, it is immediate from the definition of the evaluation map that, if $g\in\mathbb F[u]$ is a multiple of $f$ and has constant term $1$, then
\begin{equation}\label{e:If}
(x_{\alpha,k,g}^\pm)^{(r)}V_\mathbb F(\lambda,a) = (a^{k}\tilde g(a))^r(x_{\alpha}^\pm)^{(r)}V_\mathbb F(\lambda,a) = 0 \quad\text{for all}\quad \alpha\in R^+, k,r\in\mathbb Z,r>0,
\end{equation}
since $\tilde g(a)=0$.

For general $\gb\omega\in\cal P_\mathbb F^+$, say $\gb\omega=\prod_{j=1}^n \gb\omega_{\lambda_j,a_j}$ with $a_i\ne a_j$ for $i\ne j$,
we have $V_\mathbb F(\gb\omega)\cong \otm_{i=1}^n V_\mathbb F(\lambda_j,a_j)$ by Proposition \ref{p:evmoduntw}. Then, given $v=v_1\otimes\cdots\otimes v_n$ with $v_j\in V_\mathbb F(\lambda_j,a_j)$, it follows from \eqref{e:If} and \eqref{e:comultdp} with $x=x_{\alpha,k,f}$ that
\begin{equation*}
(x_{\alpha,k,f})^{(r)} v = 0 \quad\text{for all} \quad \alpha\in R^+, k,r\in\mathbb Z,r>0,
\end{equation*}
which implies $I_\mathbb F^\gb\omega V_\mathbb F(\gb\omega)=0$.
\endproof

We are ready to prove Theorem \ref{t:tmvr} in the case of simple modules. Recall the definition of $\gb\omega$ in \eqref{e:tmvr} and that $a_i^m \ne a_j^m$ for $i\ne j$ by \eqref{standard}. It then follows from Lemma \ref{l:If} and Corollary \ref{c:twistsurj} that $V_\mathbb F(\gb \omega)^\sigma$ is simple. Evidently, if $v$ is a highest-$\ell$-weight vector of $V_\mathbb F(\gb \omega)$, then $U_\mathbb F((\tlie n^+)^\sigma)^0v=0$. Hence, it remains to prove that
\begin{equation*}
\Lambda_i^{\sigma,+}(u)\ v = \gb\pi_i(u) \ v \quad\text{for all}\quad i\in I_0.
\end{equation*}

Suppose first that $o(i)$ is not fixed by $\sigma$. Then,
\begin{align*}
\Lambda_i^{\sigma,+}(u) v  &= \tprod_{j=0}^{m-1} \Lambda_{\sigma^j(o(i))} (\zeta^{m-j}u) v = \tprod_{j=0}^{m-1} \gb\omega_{\sigma^j(o(i))}(\zeta^{m-j}u) v  \\ &=\tprod_{k=1}^\ell \tprod_{j=0}^{m-1} (\gb\omega_{\sigma^j(o(i)),\zeta^{m-j}a_k})^{\mu_k(h_{\sigma^j(o(i))})}_{\sigma^j(o(i))} v \\
&\stackrel{*}{=}\tprod_{k=1}^\ell \tprod_{j=0}^{m-1} (1-\zeta^{m-j} a_k)^{e_{k,j,i}} v = \gb\pi_i(u)v,
\end{align*}
where equality $*$ follows from the definition of $\mu_k$ in \eqref{e:tmvr} and the last equality follows from \eqref{drinf}.  Similarly, if $o(i)$ is fixed by $\sigma$, then
\begin{align*}
\Lambda_i^{\sigma,+}(u)\ v  &= \Lambda_{o(i);m}^{+}(u)\ v = \tprod_{k=1}^\ell (1-a_k^mu)^{\mu_k(h(o(i)))} \\
 & = \tprod_{k=1}^\ell \tprod_{\epsilon=0}^{m-1} (1-a_k^mu)^{e_{k,\epsilon,i}} v \\
 & = \tprod_{k=1}^\ell \tprod_{\epsilon=0}^{m-1} (1-(\zeta^{m-\epsilon}a_k)^mu)^{e_{k,\epsilon,i}} v = \gb\pi_i(u)v,
\end{align*}
since $(\zeta^{m-\epsilon})^m=1$ for all $0 \le \epsilon < m-1$ and the last equality once again follows from \eqref{drinf}.\hfill\qedsymbol

The next result is immediate from Proposition \ref{p:evmoduntw} and the above proof of Theorem \ref{t:tmvr} and recovers the classical result that the simple modules are tensor products of evaluation modules.

\begin{cor}
$V_\mathbb F(\gb \pi )$ is isomorphic to $\otm_{k=1}^\ell V_\mathbb F(\mu_k,a_k)^\sigma$.\hfill\qedsymbol
\end{cor}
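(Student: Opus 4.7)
The plan is to obtain the claim as a direct consequence of the simple-module case of Theorem \ref{t:tmvr} (just established above) together with Proposition \ref{p:evmoduntw}. The argument should be essentially a chain of three isomorphisms, and no serious obstacle is expected.

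First, I would observe that by the standard decomposition \eqref{standard} the scalars $a_k$ defining $\gb\pi$ satisfy $a_i^m \ne a_j^m$ for $i \ne j$, and in particular $a_i \ne a_j$ for $i \ne j$. Hence the non-twisted $\ell$-weight $\gb\omega = \prod_{k=1}^{\ell} \gb\omega_{\mu_k, a_k}$ constructed in \eqref{e:tmvr} satisfies the hypothesis of Proposition \ref{p:evmoduntw}, yielding
\begin{equation*}
V_\mathbb F(\gb\omega) \;\cong\; \otm_{k=1}^{\ell} V_\mathbb F(\mu_k, a_k)
\end{equation*}
as $U_\mathbb F(\tlie g)$-modules.

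Next, I would note that the inclusion $U_\mathbb F(\tlie g^\sigma) \hookrightarrow U_\mathbb F(\tlie g)$ is an inclusion of Hopf algebras: both comultiplications descend from the primitive-element comultiplications on the enveloping algebras $U(\tlie g^\sigma) \subseteq U(\tlie g)$, as made explicit in the subsection on the Hopf algebra structure. Consequently, the restriction functor $V \mapsto V^\sigma$ is monoidal, and applying it to the previous display gives
\begin{equation*}
V_\mathbb F(\gb\omega)^\sigma \;\cong\; \otm_{k=1}^{\ell} V_\mathbb F(\mu_k, a_k)^\sigma.
\end{equation*}
To finish, the simple-module part of Theorem \ref{t:tmvr} supplies $V_\mathbb F(\gb\omega)^\sigma \cong V_\mathbb F(\gb\pi)$, and chaining the three isomorphisms delivers the result. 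The only thing to be careful with is checking that the $a_k$ indexing $\gb\omega$ are genuinely distinct so that Proposition \ref{p:evmoduntw} applies, but this is immediate from the standard decomposition assumption; the main content of the corollary is already packaged in Theorem \ref{t:tmvr}.
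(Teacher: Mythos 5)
Your proposal is correct and matches the paper's intended argument: the paper declares the corollary ``immediate from Proposition \ref{p:evmoduntw} and the above proof of Theorem \ref{t:tmvr},'' which is precisely the chain you give, namely $V_\mathbb F(\gb\pi)\cong V_\mathbb F(\gb\omega)^\sigma\cong\bigl(\otm_{k=1}^\ell V_\mathbb F(\mu_k,a_k)\bigr)^\sigma\cong\otm_{k=1}^\ell V_\mathbb F(\mu_k,a_k)^\sigma$. Your additional observation that the inclusion $U_\mathbb F(\tlie g^\sigma)\hookrightarrow U_\mathbb F(\tlie g)$ is a map of Hopf algebras (both comultiplications being induced from $U(\tlie g)$), so that restriction commutes with tensor product, is the small point the paper leaves implicit, and it is stated correctly.
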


\begin{rem}
Combining the last corollary with \cite[Theorem 3.4]{hyperlar}, one gets a version of Steinberg's tensor product theorem for twisted hyper loop algebras.
\end{rem}

\subsection{Proof of Theorem \ref{t:tmvr} for Weyl modules}

Let  $\cal P_\mathbb A^\times$ be the subset of $\cal P_\mathbb K^+$ consisting of $I$-tuples of polynomials whose roots lie in $\mathbb A^\times$. Similarly, define the subset $\cal P_\mathbb A^{\sigma,\times}$ of $\cal P_\mathbb K^{\sigma,+}$.

\begin{thm}
Let $\gb\varpi\in\cal P_\mathbb A^\times$ (respectively, $\gb\varpi\in\cal P_\mathbb A^{\sigma,\times}$), $v$ a highest-$\ell$-weight vector of $W_\mathbb K(\gb\varpi)$, and $L(\gb\varpi)=U_\mathbb A(\tlie g)v$ (respectively, $L(\gb\varpi)=U_\mathbb A(\tlie g^\sigma)v$). Then, $L$ is a free $\mathbb A$-module and the canonical map $\mathbb K\otimes_\mathbb A L(\gb\varpi)\to W_\mathbb K(\gb\varpi)$ is an isomorphism.
\end{thm}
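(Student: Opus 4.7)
The plan is to exhibit a finitely-generated, $U_\mathbb A$-stable $\mathbb A$-submodule $M$ of $W_\mathbb K(\gb\varpi)$ containing $v$; then $L(\gb\varpi)\subseteq M$, and both the freeness of $L(\gb\varpi)$ and the claimed isomorphism follow from standard commutative algebra over the DVR $\mathbb A$. Surjectivity of the canonical map $\mathbb K\otimes_\mathbb A L(\gb\varpi)\to W_\mathbb K(\gb\varpi)$ is immediate: its image is a $U_\mathbb K$-submodule of the highest-$\ell$-weight module $W_\mathbb K(\gb\varpi)$ containing $v$, hence equals $W_\mathbb K(\gb\varpi)$.

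To construct $M$ in the non-twisted case, I would decompose $\gb\varpi = \prod_{j=1}^{n} \gb\omega_{\lambda_j, a_j}$ with $a_j \in \mathbb A^\times$ pairwise distinct and $\lambda_j \in P^+$; such a decomposition exists because $\gb\varpi \in \cal P_\mathbb A^\times$. By characteristic-zero Chari--Pressley theory (cf.\ \cite{CPweyl}), one has $W_\mathbb K(\gb\varpi) \cong \bigotimes_{j=1}^{n} W_\mathbb K(\lambda_j, a_j)$ as $U_\mathbb K(\tlie g)$-modules, with $v$ corresponding to the tensor product of the highest-weight vectors of the factors. Each factor $W_\mathbb K(\lambda_j, a_j)$ admits the natural $\mathbb A$-form $W_\mathbb A(\lambda_j, a_j) := U_\mathbb A(\tlie g)\, v_{\lambda_j}$, which is free of finite rank over $\mathbb A$ by Kostant's integral structure theorem applied to the classical Weyl module $W_\mathbb K(\lambda_j)$ and transported via $\ev_{a_j}$. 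Setting $M := \bigotimes_{j=1}^{n} W_\mathbb A(\lambda_j, a_j)$ yields a free, finitely-generated, $U_\mathbb A(\tlie g)$-stable $\mathbb A$-submodule of $W_\mathbb K(\gb\varpi)$ containing $v$.

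The twisted case is handled analogously once we invoke the characteristic-zero version of Theorem \ref{t:tmvr} (\cite{CFS}) to identify $W_\mathbb K(\gb\varpi) \cong W_\mathbb K(\gb\omega)^\sigma$ for $\gb\omega \in \cal P_\mathbb A^\times$ built as in \eqref{e:tmvr}; the non-twisted $\mathbb A$-submodule $M$ constructed for $\gb\omega$ is automatically closed under restriction to $U_\mathbb A(\tlie g^\sigma)$. In either setting, $L(\gb\varpi)\subseteq M$, so $L(\gb\varpi)$ is a submodule of a finitely-generated module over the Noetherian ring $\mathbb A$, hence finitely generated, and it is torsion-free as a submodule of the $\mathbb K$-vector space $W_\mathbb K(\gb\varpi)$; thus $L(\gb\varpi)$ is free over the DVR $\mathbb A$. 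Finally, $\dim_\mathbb K(\mathbb K\otimes_\mathbb A L(\gb\varpi)) = \rank_\mathbb A L(\gb\varpi) = \dim_\mathbb K(\mathbb K\cdot L(\gb\varpi)) = \dim_\mathbb K W_\mathbb K(\gb\varpi)$ by the already-established surjectivity, so the surjection is an isomorphism.

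The main obstacle is justifying cleanly the tensor-product decomposition of Weyl modules and, in the twisted case, the identification $W_\mathbb K(\gb\varpi) \cong W_\mathbb K(\gb\omega)^\sigma$ in a way compatible with the integral structure --- specifically, verifying that the highest-$\ell$-weight vector on each side corresponds to the tensor product of classical highest-weight vectors on the other. Both facts are essentially tautological from how the isomorphisms in \cite{CPweyl} and \cite{CFS} are constructed via universal properties, but recording this carefully is the main bookkeeping step.
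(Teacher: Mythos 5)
Your argument breaks at the tensor factorization. The Chari--Pressley decomposition (over the characteristic-zero field $\mathbb K$, for pairwise distinct $a_j$) gives
\begin{equation*}
W_\mathbb K(\gb\varpi)\cong\otm_{j=1}^{n} W_\mathbb K(\gb\omega_{\lambda_j,a_j}),
\end{equation*}
where each factor is again an \emph{affine} Weyl module supported at a single point; it is not the evaluation module $W_\mathbb K(\lambda_j,a_j)=W_\mathbb K(\lambda_j)(a_j)$ that you use. These genuinely differ: already for $\lie g=\lie{sl}_2$ one has $\dim W_\mathbb K(\gb\omega_{m\omega_1,a})=2^m$ while $\dim W_\mathbb K(m\omega_1)=m+1$. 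What \emph{is} true is Proposition \ref{p:evmoduntw}, namely $V_\mathbb K(\gb\varpi)\cong\otm_{j=1}^{n} V_\mathbb K(\lambda_j,a_j)$, i.e., the tensor product of evaluation modules you describe realizes the \emph{simple} quotient of $W_\mathbb K(\gb\varpi)$, not $W_\mathbb K(\gb\varpi)$ itself. Consequently $M=\otm_{j=1}^{n} U_\mathbb A(\tlie g)\,v_{\lambda_j}$, built by transporting Kostant's integral form along $\ev_{a_j}$, is a lattice in the wrong (smaller) module; it cannot be realized as a submodule of $W_\mathbb K(\gb\varpi)$, so the inclusion $L(\gb\varpi)\subseteq M$ that drives the whole argument never materializes. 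Even if you used the correct factorization above, Kostant's theorem gives no $\mathbb A$-form of an affine Weyl module $W_\mathbb K(\gb\omega_{\lambda_j,a_j})$, so the reduction to a single point does not by itself help. The twisted case inherits the same gap since you reduce it to the non-twisted one.

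The surjectivity half of your argument and the endgame --- torsion-free plus finitely generated over the DVR $\mathbb A$ implies free, then compare ranks --- are correct. What is missing is the production of a finitely generated, $U_\mathbb A$-stable $M$ with $L(\gb\varpi)\subseteq M\subseteq W_\mathbb K(\gb\varpi)$, and this is the substance of the result. The paper obtains it intrinsically by rerunning, over $\mathbb A$ instead of $\mathbb F$, the spanning argument in the proof of Theorem \ref{t:twfd}: the relations of Lemma \ref{basicreltw} and the straightening Lemma \ref{sbrw} have coefficients in $\mathbb Z\subseteq\mathbb A$, so they reduce any PBW monomial applied to $v$ to an $\mathbb A$-linear combination of the finitely many monomials $v_\phi$ with $0\le s_j<\wt(\gb\varpi)(h_{\mu_j,0})$. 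One then takes $M=L(\gb\varpi)$, which is now visibly finitely generated over $\mathbb A$, and concludes as you do (the non-twisted case is exactly \cite[Theorem 4.5(b)]{hyperlar}).
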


\begin{proof}
The non-twisted statement was proved in \cite[Theorem 4.5(b)]{hyperlar} and the twisted version is proved similarly by repeating the steps of the proof of Theorem \ref{t:twfd} (cf. proof of \cite[Theorem 4.2]{hyperlar}).
\end{proof}

Let $L(\gb\varpi)$ be as in the last theorem and set
\begin{equation*}
L_\mathbb F(\gb\varpi) = \mathbb F\otimes_\mathbb A L(\gb\varpi).
\end{equation*}
Then, $L_\mathbb F(\gb\varpi)$ is a $U_\mathbb F(\tlie g)$-module (respectively, a $U_\mathbb F(\tlie g^\sigma)$-module) which is clearly a quotient of $W_\mathbb F(\gb\lambda)$ where $\gb\lambda$ is the image of $\gb\varpi$ in $\cal P_\mathbb F^+$ (respectively, in $\cal P_\mathbb F^{\sigma,+}$).

\begin{conj}\label{conj}
$L_\mathbb F(\gb\varpi)\cong W_\mathbb F(\gb\lambda)$. In particular, $\dim W_\mathbb F(\gb\lambda) = \dim W_\mathbb K(\gb\mu)$ for any $\gb\mu\in\cal P_\mathbb K^+$ (respectively, $\gb\mu\in\cal P_\mathbb K^{\sigma,+}$) such that $\wt(\gb\mu)=\wt(\gb\lambda)$.
\end{conj}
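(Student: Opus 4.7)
The plan is to upgrade the canonical surjection $W_\mathbb F(\gb\lambda)\twoheadrightarrow L_\mathbb F(\gb\varpi)$ to an isomorphism. This surjection exists because the image of a highest-$\ell$-weight vector in $L_\mathbb F(\gb\varpi)$ satisfies the defining relations of $W_\mathbb F(\gb\lambda)$: the highest-$\ell$-weight relations transfer by construction of the $\mathbb A$-form, and the Weyl relations of the form $(x_{\mu,0}^-\otimes t^{mr})^{(l)}v=0$ already hold in $L(\gb\varpi)\subseteq W_\mathbb K(\gb\varpi)$ and hence descend to $\mathbb F$. Since the freeness statement of the preceding theorem gives $\dim_\mathbb F L_\mathbb F(\gb\varpi)=\dim_\mathbb K W_\mathbb K(\gb\varpi)$, establishing the conjecture is equivalent to proving the dimension bound $\dim_\mathbb F W_\mathbb F(\gb\lambda)\le\dim_\mathbb K W_\mathbb K(\gb\varpi)$.

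First I would establish a tensor product decomposition analogous to Proposition \ref{p:evmoduntw}: if $\gb\lambda=\prod_k\gb\lambda_k$ where the $\gb\lambda_k$ have pairwise disjoint spectral supports (all $a_k$ distinct, or all $a_k^m$ distinct in the twisted case), then $W_\mathbb F(\gb\lambda)\cong\bigotimes_k W_\mathbb F(\gb\lambda_k)$. A map $W_\mathbb F(\gb\lambda)\to\bigotimes_k W_\mathbb F(\gb\lambda_k)$ sending the generator to the tensor product of generators exists by the comultiplication formulas \eqref{e:comultdp} and \eqref{e:comutLambdaser}; its surjectivity follows from a variant of Corollary \ref{c:twistsurj}, i.e.\ from the fact that the disjoint-support condition forces a separation of variables in the module action.

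Next I would reduce to the case $\gb\lambda=\gb\omega_{\lambda,a}$ (resp.\ $\gb\omega^\sigma_{\lambda,a}$) supported at a single point. In the untwisted fundamental case, a direct comparison of defining relations (using \eqref{e:ev(gen)}) shows that $W_\mathbb F(\gb\omega_{\lambda,a})$ is isomorphic to the pullback $W_\mathbb F(\lambda,a)$ of the classical Weyl module via $\mathrm{ev}_a$, and hence has dimension $\dim W_\mathbb F(\lambda)$; this is independent of $\mathbb F$ by the Weyl character formula (Theorem \ref{t:rh}(e)). For the twisted fundamental case the strategy is parallel but uses the simple-module half of Theorem \ref{t:tmvr} (already proved) together with the characteristic-zero theorem of \cite{CFS} to translate the comparison $W_\mathbb K(\gb\omega)^\sigma\cong W_\mathbb K(\gb\pi)$ into a statement about dimensions of $W_\mathbb F(\gb\omega)$ and $W_\mathbb F(\gb\pi)$, via the $\mathbb A$-form $L(\gb\varpi)$.

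The main obstacle I anticipate is the tensor product decomposition in positive characteristic. The difficulty lies in the direction $W_\mathbb F(\gb\lambda)\to\bigotimes_k W_\mathbb F(\gb\lambda_k)$: the tensor product may a priori fail to be a highest-$\ell$-weight module of the correct highest weight without a separate argument, so exhibiting the inverse map requires extra input beyond the Hopf-algebraic calculation. The most robust approach, and the one I expect to underlie the proof promised in Remark \ref{r:conj}, is to introduce a global (or polynomial) Weyl module depending on formal parameters $z_1,\dots,z_n$, prove that it is free over the corresponding polynomial ring $\mathbb F[z_1,\dots,z_n]$, and then deduce the local decomposition by specializing to distinct points of the spectrum. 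Executing this flatness/freeness argument in positive characteristic, following the Chari--Pressley--Ion template but without characteristic-zero crutches such as complete reducibility, is the principal technical hurdle.
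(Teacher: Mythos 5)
This is not a theorem of the paper: it is explicitly labeled as Conjecture~\ref{conj}, and the authors do \emph{not} prove it. Remark~\ref{r:conj} only records that it is expected to follow from methods analogous to those of Chari--Pressley, Naoi, and Nakajima, and cites work in progress. So there is no proof in the paper to compare your proposal against; what you have written is a research plan for an open problem.

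That said, your plan contains a concrete error that would sink it even with all the flatness machinery in place. You claim that after reducing to a single spectral point, ``a direct comparison of defining relations (using \eqref{e:ev(gen)}) shows that $W_\mathbb F(\gb\omega_{\lambda,a})$ is isomorphic to the pullback $W_\mathbb F(\lambda,a)$ of the classical Weyl module via $\mathrm{ev}_a$.'' This is false, already in characteristic zero and already for $\lie g=\lie{sl}_2$: with $\lambda=m\omega_1$ one has $\dim W_\mathbb C(\gb\omega_{\lambda,a})=2^m$, whereas the evaluation pullback $W_\mathbb C(\lambda,a)$ has dimension $m+1$. The defining relations of the loop Weyl module involve only $(x_\alpha^+\otimes t^r)^{(k)}v=0$, the eigenvalue conditions on $U_\mathbb F(\tlie h)$, and $(x_\alpha^-)^{(l)}v=0$ for $l>\lambda(h_\alpha)$; they do \emph{not} force the whole $\tlie n^-$-action to factor through $\mathrm{ev}_a$, and indeed $(x_\alpha^-\otimes t^r)v$ and $(x_\alpha^-\otimes t^s)v$ are generically linearly independent. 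Thus the single-point case is exactly where the hard content of the conjecture sits --- one must relate $\dim W_\mathbb F$ to a product of Kirillov--Reshetikhin-type dimensions (independent of $\mathbb F$) rather than to $\dim W_\mathbb F(\lambda)$, and this is what the quantum/global-basis or Demazure-module approaches are for. Your acknowledged hurdle (the tensor factorization $W_\mathbb F(\gb\lambda)\cong\bigotimes_k W_\mathbb F(\gb\lambda_k)$ for disjoint spectral supports in positive characteristic) is a genuine, additional obstacle, but even granting it your reduction does not terminate in a provable base case.

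So: the proposal is not a correct proof, and cannot be completed as written because of the false identification of $W_\mathbb F(\gb\omega_{\lambda,a})$ with $W_\mathbb F(\lambda,a)$; and since the paper itself does not prove this statement, the correct answer to ``prove the statement'' is to recognize it as a standing conjecture and either cite the subsequent literature resolving it or leave it open.
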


\begin{rem}\label{r:conj}
In the non-twisted case, the above conjecture was formulated in \cite{hyperlar}. It can be regarded as an analogue of a conjecture of Chari and Pressley \cite{CPweyl} saying that all Weyl modules for $U_\mathbb C(\tlie g)$ can be obtained as classical limits of appropriate Weyl modules for quantum affine algebras. Chari-Pressley's conjecture has been proved in \cite{chalok} (if $\lie g$ is of type $A$), in \cite{foli:weyldem} (for simply laced $\lie g$), and in \cite{naoi} (in general). A different approach has been pointed out by Nakajima using the global basis theory developed in \cite{bn,kascb,kaslz,nakq,nake} (\cite{naoi} also uses global basis results, but in a different manner). Nakajima's argument, which is outlined in the introduction of \cite{foli:weyldem}, most likely can be used to give a proof of Conjecture \ref{conj} as well (including the twisted version). See also \cite{bmm} for an approach for proving Conjecture \ref{conj} along the lines of \cite{foli:weyldem,naoi}.
\end{rem}

Recall the definition of $f_{_\gb\omega}$ from \eqref{e:fomega} and, given $N\in \mathbb N$, let
\begin{equation*}
I_\mathbb F^{\gb\omega,N} = I_\mathbb F^{f_{_\gb\omega}^N}.
\end{equation*}
Similarly, define $I_\mathbb K^{\gb\varpi,N}$ for $\gb\varpi\in\cal P_\mathbb K^+$ and $I_\mathbb A^{\gb\varpi,N}$ for $\gb\varpi\in\cal P_\mathbb A^\times$.
The following is the analogue of Proposition \ref{p:twistsurj} for Weyl modules.

\begin{lem}\label{l:IfN} For all $\gb\omega\in\cal P_\mathbb F^+$, there exists $N\in \mathbb Z_+$ such that $I_\mathbb F^{\gb\omega,N} W_\mathbb F(\gb\omega)=0$.
\end{lem}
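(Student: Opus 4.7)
The plan is to employ the reduction-modulo-$p$ strategy announced at the start of the section. I would carry it out in three steps.

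First, I would lift $\gb\omega$ to characteristic zero. Writing $\gb\omega=\tprod_{j=1}^n\gb\omega_{\lambda_j,a_j}$ with the $a_j\in\mathbb F^\times$ pairwise distinct, I would choose $\tilde a_j\in\mathbb A^\times$ with $\overline{\tilde a_j}=a_j$ and $\tilde a_i\ne\tilde a_j$ for $i\ne j$ (possible because $\mathbb A$ is a DVR with residue field $\mathbb F$), and set $\gb\varpi=\tprod_{j=1}^n\gb\omega_{\lambda_j,\tilde a_j}\in\cal P_\mathbb A^\times$; then $f_{_\gb\varpi}$ reduces to $f_{_\gb\omega}$ modulo the maximal ideal of $\mathbb A$.

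Next, I would kill the ideal in characteristic zero and descend to $L_\mathbb F(\gb\varpi)$. Since the $\tilde a_j$ are pairwise distinct, the classical tensor product theorem for Weyl modules over a loop algebra in characteristic zero (which rests on Proposition \ref{p:evmoduntw} together with the semisimplicity of finite-dimensional $U_\mathbb K(\lie g)$-modules) yields $W_\mathbb K(\gb\varpi)\cong V_\mathbb K(\gb\varpi)$. Applying Lemma \ref{l:If} over $\mathbb K$ then gives $I_\mathbb K^{\gb\varpi}W_\mathbb K(\gb\varpi)=0$. Because the generators $(x^\pm_{\alpha,k,f_{_\gb\varpi}})^{(r)}$ of $I_\mathbb A^{\gb\varpi}$ lie in $U_\mathbb A(\tlie g)$ and $L(\gb\varpi)\subseteq W_\mathbb K(\gb\varpi)$ is $U_\mathbb A(\tlie g)$-stable by construction, this forces $I_\mathbb A^{\gb\varpi}L(\gb\varpi)=0$. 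Tensoring with $\mathbb F$ over $\mathbb A$ and using \eqref{e:xkfinF} to identify the image in $U_\mathbb F(\tlie g)$ of each generator $(x^\pm_{\alpha,k,f_{_\gb\varpi}})^{(r)}$ with the generator $(x^\pm_{\alpha,k,f_{_\gb\omega}})^{(r)}$ of $I_\mathbb F^{\gb\omega}$, I obtain $I_\mathbb F^{\gb\omega}L_\mathbb F(\gb\varpi)=0$, and a fortiori $I_\mathbb F^{\gb\omega,N}L_\mathbb F(\gb\varpi)=0$ for every $N\ge 1$.

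Finally, I would transfer this annihilation from $L_\mathbb F(\gb\varpi)$ to $W_\mathbb F(\gb\omega)$. The universal property of $W_\mathbb F(\gb\omega)$ provides a canonical surjection $\pi\colon W_\mathbb F(\gb\omega)\twoheadrightarrow L_\mathbb F(\gb\varpi)$, and under Conjecture \ref{conj} this surjection is an isomorphism, so the lemma follows immediately with $N=1$. I expect this final step to be the main obstacle to a fully intrinsic proof: controlling the finite-dimensional kernel $\ker\pi$ appears to require either the conjecture or an independent argument showing that every composition factor $V_\mathbb F(\gb\pi_j)$ of $W_\mathbb F(\gb\omega)$ has Drinfeld polynomial whose roots lie in $\{a_1,\dots,a_n\}$. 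Granting such control, one would have $f_{\gb\pi_j}\mid f_{_\gb\omega}^{M_j}$ for suitable $M_j$; since $\tilde f_{_\gb\omega}^{M_j}=\tilde f_{\gb\pi_j}\cdot\tilde h$ then writes each generator of $I_\mathbb F^{\gb\omega,M_j}$ as an $\mathbb F$-linear combination of commuting generators $x^\pm_\alpha\otimes t^{k+l}\tilde f_{\gb\pi_j}$ of $I_\mathbb F^{\gb\pi_j}$, an application of \eqref{e:divpsum} would yield $I_\mathbb F^{\gb\omega,M_j}\subseteq I_\mathbb F^{\gb\pi_j}$, and combining this with finiteness of the composition length would produce the desired $N$.
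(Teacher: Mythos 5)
Your overall strategy coincides with the paper's: lift $\gb\omega$ to $\gb\varpi\in\cal P_\mathbb A^\times$, kill a power of the ideal in characteristic zero, pass to the $\mathbb A$-form $L(\gb\varpi)$, reduce modulo the maximal ideal, and invoke Conjecture~\ref{conj} to identify $L_\mathbb F(\gb\varpi)$ with $W_\mathbb F(\gb\omega)$. You also correctly anticipate that the conjecture is indispensable at the last step.

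There is, however, a genuine error in your second step. The claim $W_\mathbb K(\gb\varpi)\cong V_\mathbb K(\gb\varpi)$ in characteristic zero is false. You are conflating the Weyl module $W_\mathbb K(\lambda)$ for $U_\mathbb K(\lie g)$ (which \emph{is} simple in characteristic zero by complete reducibility) with the loop Weyl module $W_\mathbb K(\gb\omega_{\lambda,a})$ for $U_\mathbb K(\tlie g)$ (which is generally \emph{not} simple). The tensor product factorization over distinct spectral parameters gives $W_\mathbb K(\gb\varpi)\cong\bigotimes_j W_\mathbb K(\gb\omega_{\lambda_j,\tilde a_j})$, but each factor $W_\mathbb K(\gb\omega_{\lambda_j,\tilde a_j})$ properly contains $V_\mathbb K(\lambda_j,\tilde a_j)$ whenever $\lambda_j$ is not a fundamental weight (e.g.\ for $\lie{sl}_2$ and $\lambda=2\omega_1$, $\dim W_\mathbb K(\gb\omega_{\lambda,a})=4$ while $\dim V_\mathbb K(\lambda,a)=3$). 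Indeed, the whole content of the Chari--Pressley conjecture referenced in Remark~\ref{r:conj} is to identify the dimensions of these characteristic-zero Weyl modules, which would be trivial were they simple. Consequently $N=1$ is not sufficient in characteristic zero, and Lemma~\ref{l:If} cannot be applied directly to $W_\mathbb K(\gb\varpi)$. The paper gets around this by citing \cite[Proposition 2.7]{CFS} for the existence of some finite $N$ with $I_\mathbb K^{\gb\varpi,N}W_\mathbb K(\gb\varpi)=0$; with that replacement your remaining steps go through and match the paper.

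Your closing observation that the composition-factor argument would also produce the needed $N$ (if the spectra of all composition factors of $W_\mathbb F(\gb\omega)$ were known to lie in $\{a_1,\dots,a_n\}$) is correct as stated, but the needed control of those spectra in positive characteristic is not available intrinsically, which is exactly why the paper resorts to the characteristic-zero result and reduction modulo $p$.
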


\proof
Let $\gb\varpi\in\cal P_\mathbb A^\times$ be such that its image in $\cal P_\mathbb F^+$ is $\gb\omega$. It follows from \cite[Proposition 2.7]{CFS} that there exists $N\in \mathbb Z_+$ such that $I_{\mathbb K}^{\gb\varpi,N} W_{\mathbb K}(\gb\varpi)=0$. Hence, $I_{\mathbb A}^{\gb\varpi,N} L(\gb\varpi)=0$ which implies $\overline{I_{\mathbb A}^{\gb\varpi,N}} L_\mathbb F(\gb\varpi)=0$ where $\overline{I_{\mathbb A}^{\gb\varpi,N}}$ is the image of $I_{\mathbb A}^{\gb\varpi,N}$ in $U_\mathbb F(\tlie g)$. Since $I_\mathbb F^{\gb\omega,N}$ is clearly contained in $\overline{I_{\mathbb A}^{\gb\varpi,N}}$, it follows that
$I_{\mathbb F}^{\gb\omega,N} L_\mathbb F(\gb\varpi)=0$ and the lemma follows from Conjecture \ref{conj}.
\endproof

We are ready to prove Theorem \ref{t:tmvr} in the case of Weyl modules.  Let $\gb\omega$ be as in \eqref{e:tmvr}, $N$ as in Lemma \ref{l:IfN}, and fix a highest-$\ell$-weight vector $v$ for $W_\mathbb F(\gb\omega)$. The same computation closing the proof of Theorem \ref{t:tmvr} for simple modules shows that $U_{\mathbb F}(\tlie g^\sigma)v$ is a quotient of $W_\mathbb F(\gb\pi)$. Moreover, it follows from Lemma \ref{l:IfN} and Corollary \ref{c:twistsurj} that
\begin{equation*}
W_\mathbb F(\gb\omega) = U_{\mathbb F}(\tlie g^\sigma)v.
\end{equation*}

On the other hand, let $\gb\varpi\in\cal P_\mathbb A^{\sigma,\times}$ be such that its image in $\cal P_\mathbb F^{\sigma,+}$ is $\gb\pi$. Then, by the twisted part of Conjecture \ref{conj}, $\dim W_\mathbb F(\gb\pi) = \dim W_\mathbb K(\gb\varpi)$. By the characteristic zero version of Theorem \ref{t:tmvr} proved in \cite{CFS}, we have $\dim W_\mathbb K(\gb\varpi)=\dim W_\mathbb K(\gb\mu)$ for some $\gb\mu\in\cal P_\mathbb K^+$ such that $\wt(\gb\mu)=\wt(\gb\omega)$ (in fact, it is easy to see that $\gb\mu\in\cal P_\mathbb A^\times$ and that its image in $\cal P_\mathbb F^+$ is $\gb\omega$). Another application of the non-twisted part of Conjecture \ref{conj} completes the proof.\hfill\qedsymbol

\begin{rem}
Using Theorem \ref{t:tmvr}, one can show by the same arguments used in characteristic zero (see \cite{S2}) that the blocks of the category of finite-dimensional $U_\mathbb F(\tlie g^\sigma)$-modules are described as in the characteristic zero setting.
\end{rem}

\end{document}